\theoremstyle{plain}
\newtheorem{theorem}{Theorem}[section]
\newtheorem{lemma}[theorem]{Lemma}
\theoremstyle{definition}
\newtheorem{remark}[theorem]{Remark}
\newcommand{\R}{\mathbb{R}}
\newcommand{\x}{\boldsymbol{x}}
\newcommand{\y}{\boldsymbol{y}}
\newcommand{\conv}{\mathop{\mathrm{conv}}}
\author{Isaac Arelio, Luis Montejano\thanks{Research  funded by PAPIIT grant AGI00721}, Deborah Oliveros }
\title{A 4-Dimensional Peabody of Constant Width}
\begin{document}

\maketitle

\begin{abstract}
%To the best of our knowledge; besides the obvious examples of bodies of revolution obtained from rotating a $3$-dimensional body of constant width, there are no concrete examples of bodies of this type in dimension $4$ until now. 
In this paper we present a unique $4$-dimensional body of constant width based on the classical notion of focal conics. 
\end{abstract}

\section{Introduction}

In \cite{AMO}, we describe a family of $3$-dimensional bodies of constant width which we  called peabodies, obtained from the Reuleaux tetrahedron by replacing a small neighborhood of its six singular edges with sections of an envelope of $3$-balls. (See Figure \ref{patric}).

\begin{figure}[h]
\begin{center}
\includegraphics[scale=.25]{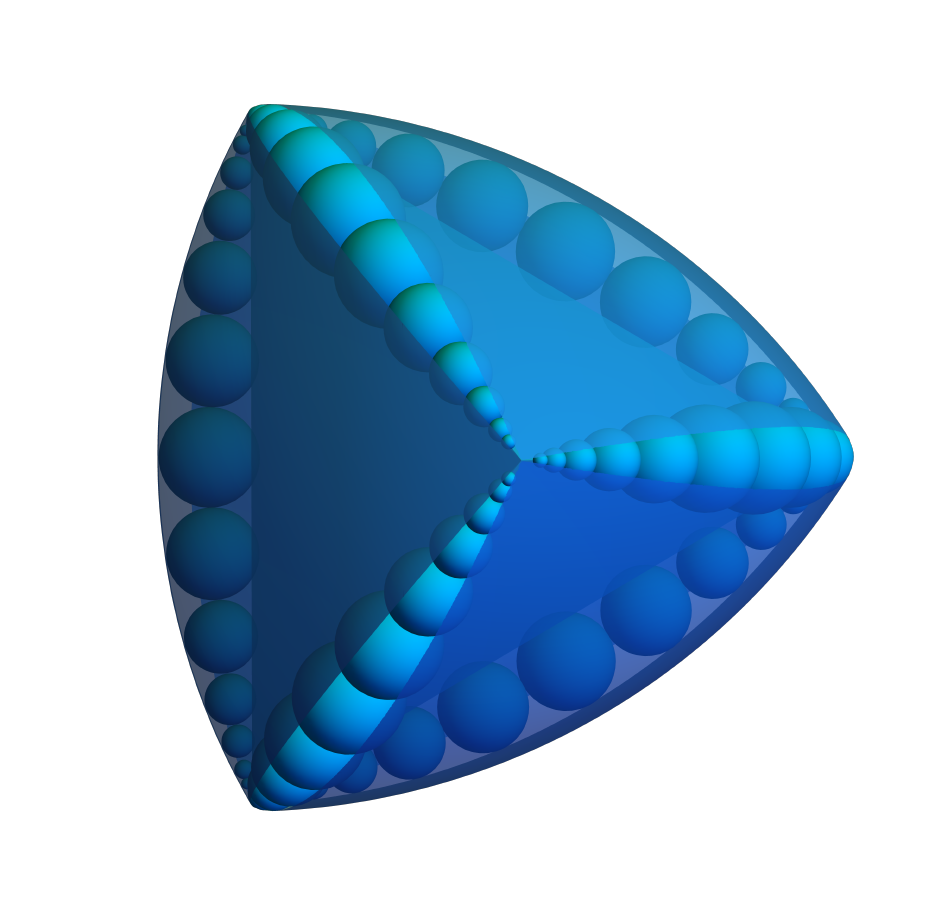}
\caption{A peabody of constant width }\label{patric}
\end{center}
\end{figure}

The purpose of this paper is to describe a $4$-dimensional body of constant width obtained from the $4$-simplex Reuleaux body  by replacing a small neighborhood of its $2$-skeleton with sections of a $4$-ball envelope. Besides the obvious rotational bodies of constant width, this is a unique concrete body of constant width that is known in dimension $4$.

Although in \cite{AMO}, we find a whole family of $3$-dimensional peabodies of constant width, everything seems to indicate that there is a unique $4$-dimensional peabody of constant width constructed in this manner. %from the $5$-simplex Reuleaux body by replacing a small neighborhood of its $2$-skeleton with sections of an envelope of $4$-balls. 
Indeed, this exceptional body of constant width circumscribes the $4$-simplex $\Delta^4$, has exactly its same symmetries and, with the exception of its $5$ vertices, has a smooth boundary.
 
 Behind the construction of the $3$-dimensional peabodies of constant width  lies the classical notion of focal conics discussed, for example, by Hilbert in his famous book \emph{Geometry and Imagination} \cite{H}.  In the $3$-dimensional case \cite{AMO}, we replace the singularities of the Reuleaux  tetrahedron with %a $K_4$ whose edges are 
subarcs of conics in such a way that two dual edges are focal conics. In this paper, and for the construction of the $4$-dimensional peabody of constant width, we obtain a new and unique $2$-skeleton out of the regular $4$-simplex, where every edge is replaced by a subarc of an ellipse and every triangle is replaced by a piece of a hyperboloid of revolution. As incredible as it may seem,  
everything matches in such a way that the two dual ``faces" are focal quadrics. Furthermore, the symmetries of this new transformed  
version of the $2$-skeleton are exactly the symmetries of the $2$-skeleton of a regular $4$-simplex. %all these are achieved  in such a way that the symmetries of this exotic $2$-skeleton of the regular $5$-simplex has exactly the symmetries of the regular $5$-simplex.  
%We are convinced that finding this object was like finding a needle in a haystack. %A needle we had never seen in a haystack where the needle had nothing to do.

This paper is organized as follows. In Section \ref{sec:focal} we introduce the notion of focal quadrics, a definition based on the classical notion of focal conics. In Section \ref{sec:stainer} we define the concept of elliptic and hyperbolic Steiner chains of $4$-balls.
   In Section \ref{sec:embededsimplices} we prove that the regular $4$-simplex is focally embedded in a pair of two focal quadrics and use the notions we have defined to prove
the existence of a pair of $3$-dimensional submanifolds in $\R^4$. This pair of submanifolds is related to a pair of dual faces of the $4$-regular simplex and are in a certain sense ``opposite," since in them the property of constant width is achieved. In Section \ref{sec:2-skeleton}, we show how a focal $2$-skeleton of  $\Delta^4$ is obtained, where each edge is replaced by a subarc of an ellipse, each triangle is replaced by a piece of a hyperboloid of revolution and  if an edge and a triangle of the $2$-skeleton are dual, then the corresponding  quadrics are focal.  In Section \ref{sec:assembly}, we show how all these submanifolds are assembled to yield the boundary of a $4$-dimensional convex body and finally, in Section \ref{sec:diameter}, we verify that  constant width is achieved.

\section{Focal Quadrics}\label{sec:focal}

Consider two conics lying in orthogonal planes, whose principal axes coincide with the intersection of the planes.  (See Figure \ref{cocon}). We say that these two conics are focal  if the focus of each of them lies on the other. See Section 2.4 of the  book \emph{Geometry and the Imagination} by Hilbert and Cohn-Vossen \cite{H}.

\begin{figure}[h!]
\begin{center}
\includegraphics[scale=.5]{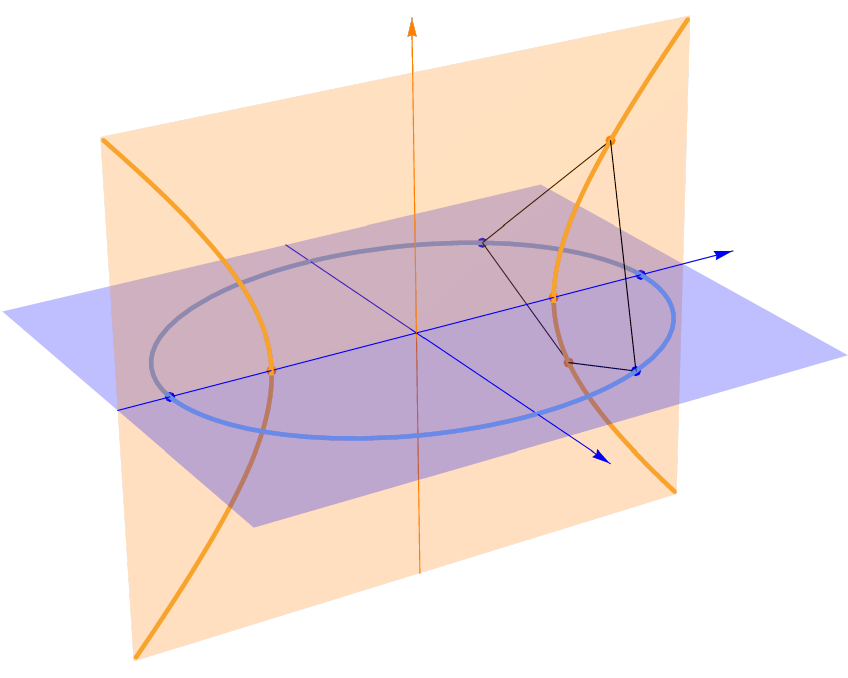}\put(-25,101){$x$}\put(-108,165){$y$}\put(-55,45){$z$}
\put(-50,70){$a_e$}\put(-55,125){$a_h$}\put(-75,70){$b_h$}\put(-100,107){$b_e$}
\caption{A pair of Focal conics sharing their principal axis as the $x$-axis. }\label{cocon}
\end{center}
\end{figure}

In the standard case, when we consider the euclidean $3$-space with coordinates $(x,y,z)$ one of the two focal conics is an ellipse and the other a hyperbola, sharing their principal axis, say the $x$-axis, then, their corresponding equations are given as follows, assuming that $a>b$.

\begin{equation}\label{ellipse-hyperbola}
\frac{z^2}{a^2-b^2}+\frac{x^2}{a^2}=1 \quad\quad \text{ and } \quad\quad \frac{y^2}{a^2-b^2}-\frac{x^2}{b^2}=1.	
\end{equation}

The following result was proved in \cite{AMO}, [\emph{Theorem 2.1a}] and is relevant to our construction. 
%\smallskip
Throughout the paper we will abuse the notation by denoting by $pq$ both the interval whose endpoints are the points $p$ and $q$, and its length.

\begin{theorem}\label{unoyuno}
Let $E$ and $H$ be two focal conics, where $E$ is an ellipse and $H$ is an hyperbola. 
Suppose $a_e,b_e\in E$, and  $a_h,b_h\in H$.   If  $a_h, b_h$ lie in the same connected component of $H$, then
\[a_ea_h+b_eb_h=a_hb_e+a_eb_h.\]   
\end{theorem}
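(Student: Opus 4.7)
The plan is to reduce the four-distance equation to a trivial algebraic identity by deriving a closed form for the distance between a point on the ellipse and a point on one branch of the hyperbola. The key observation, which I expect to be the heart of the argument, is that for focal conics the squared distance factors as a perfect square, thanks to the coincidence of the auxiliary semi-axis parameter shared by $E$ and $H$.

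First I would parametrize the ellipse as $p_e(\theta) = (a\cos\theta,\,0,\,c\sin\theta)$ and one branch of the hyperbola as $p_h(\phi) = (b\cosh\phi,\,c\sinh\phi,\,0)$, where $c=\sqrt{a^2-b^2}$ is the common parameter arising from the focal relation. Expanding $|p_e(\theta)-p_h(\phi)|^2$ and using the Pythagorean identities $\cos^2\theta+\sin^2\theta=1$ and $\cosh^2\phi-\sinh^2\phi=1$, the four quadratic terms collapse so that
\[
|p_e(\theta)-p_h(\phi)|^2 \;=\; a^2\cosh^2\phi - 2ab\cos\theta\cosh\phi + b^2\cos^2\theta \;=\; (a\cosh\phi - b\cos\theta)^2.
\]
Since $a>b$ and $\cosh\phi\ge 1$, the quantity $a\cosh\phi - b\cos\theta$ is always positive, and taking square roots gives the clean formula $|p_e p_h| = a\cosh\phi - b\cos\theta$.

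With this formula in hand the conclusion is immediate. Writing $a_e=p_e(\theta_1)$, $b_e=p_e(\theta_2)$, $a_h=p_h(\phi_1)$, $b_h=p_h(\phi_2)$, where the hypothesis that $a_h,b_h$ lie on the same component of $H$ guarantees that a single parametrization of one branch covers both points, I compute
\[
a_e a_h + b_e b_h \;=\; a(\cosh\phi_1+\cosh\phi_2) - b(\cos\theta_1+\cos\theta_2),
\]
and exactly the same expression results for $a_h b_e + a_e b_h$ after swapping the pairing of indices. The required equality follows.

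The main obstacle is purely computational: performing the expansion carefully so that the cross terms consolidate into a perfect square. Once this closed form is in place, the same-branch hypothesis is seen to be precisely what is needed to select a consistent sign in extracting the square root; on the opposite branch the distance becomes $a\cosh\phi + b\cos\theta$, and mixing branches breaks the cancellation that produces the four-point identity.
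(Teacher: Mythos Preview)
Your proof is correct. The computation that $|p_e(\theta)-p_h(\phi)|^2=(a\cosh\phi-b\cos\theta)^2$ is exactly the crux, and your justification that $a\cosh\phi>b\cos\theta$ (so the square root is taken without an absolute value) is the place where the same-branch hypothesis is used; the four-term identity then follows by the obvious symmetry. One minor remark: the hyperbola equation displayed in the paper as $\frac{y^2}{a^2-b^2}-\frac{x^2}{b^2}=1$ has a sign slip (compare with equation~\eqref{EHq}, where the transverse axis is clearly along $x$); your parametrization $(b\cosh\phi,\,c\sinh\phi,\,0)$ corresponds to the intended equation $\frac{x^2}{b^2}-\frac{y^2}{a^2-b^2}=1$, which is the right one for a focal pair.

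As for comparison with the paper: the present paper does not actually prove Theorem~\ref{unoyuno}; it quotes it from the companion article~\cite{AMO}. Your argument is the standard direct route via the closed-form distance $a\cosh\phi-b\cos\theta$, which is the classical fact underlying the ``string construction'' interpretation of focal conics. This is almost certainly the same computation carried out in~\cite{AMO}, and in any case it is self-contained and fully rigorous as written.
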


\noindent {\bf Remark:} \emph{The above theorem tells us that the whole ellipse plays the role of the foci of the hyperbola. 
This is so because any two points of the ellipse (not only the foci of the hyperbola) can be chosen to play the characteristic role of foci for the hyperbola.} 
%This is so because any two points of the hyperbola (ellipse) and not only the foci of the ellipse (hyperbola)  can be chosen to play the characteristic role of the foci for the ellipse (hyperbola).}

\smallskip

Next, we will generalize this notion to focal quadrics for our needs in dimension $4$.  Let $\Gamma^3$ be a $3$-dimensional hyperplane in 
$\R^4$ and let $\Gamma^2$ be a $2$-dimensional plane in $\R^4$  intersecting orthogonally $\Gamma^3$ at a line $l$.  
Consider an ellipse $E\subset \Gamma^2$ with principal axis in $l$ and an hyperbola of revolution 
$\mathcal{H}\subset \Gamma^3$ also with axis in $l$, with the property that the focus of each of them lies on the other, 
thus for every  $2$-dimensional plane $\Gamma'\subset \R^4$ containing $l$, the ellipse $E$ and the hyperbola $\mathcal{H}\cap\Gamma'$ are focal conics in $\Gamma^2+ \Gamma'$. If this is the case, we shall say that the ellipse $E$ and the hyperboloid of revolution $\mathcal{H}$ are \emph{focal quadrics}.
(See Figure \ref{concon}).

In the standard case, when $l$ is the $x$-axis, if we assume that the coordinates of the points $\R^4$ are given $(x,y,z,w)$, the ellipse $E\subset \{x,z\}$-plane,
 and the hyperboloid of revolution around the $x$-axis is contained in the $\{x,y,w\}$-subspace of 
 $\R^4$. Thus, the equations are given as follows:   %$\{x,y,z\}$-subspace of $\R^4$},  the equations are given as follows: 
\begin{equation}\label{ellipse-Rhyperbola}
\frac{z^2}{a^2-b^2}+\frac{x^2}{a^2}=1 \quad\quad \text{ and } \quad\quad \frac{y^2}{a^2-b^2}-\frac{x^2}{b^2}+\frac{w^2}{a^2-b^2}=1. %\quad\quad \frac{z^2}{a^2-b^2}-\frac{x^2}{b^2}+\frac{w^2}{a^2-b^2}=1.	
\end{equation}

\begin{figure}[h!]
\begin{center}
\includegraphics[scale=.59]{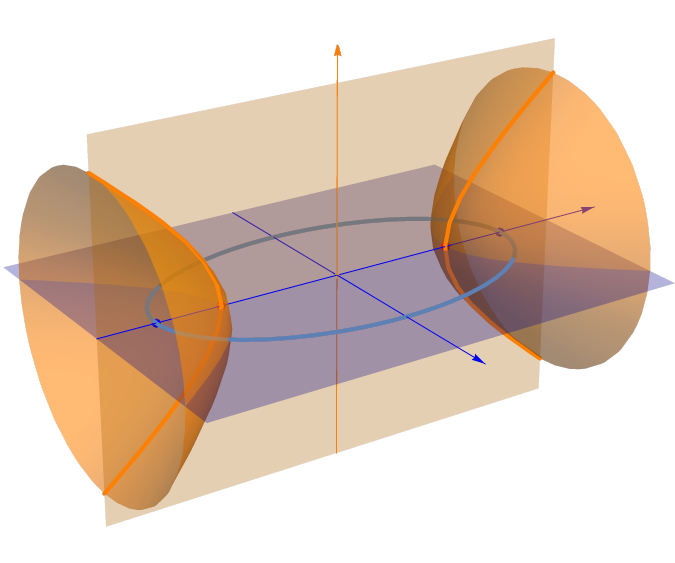}
\caption{A $3$-dimensional representation of focal quadrics in $\R^4$. Clearly, in dimension $4$, the ellipse $E$ and the hyperboloid of 
revolution do not intersect each other.}\label{concon}
\end{center}
\end{figure}

\begin{theorem}\label{Runoyuno}  Let $E$ be an ellipse  and $\mathcal{H}$ be a hyperboloid of revolution in $\R^4$.  Assume $E$ and $\mathcal{H}$ are focal quadrics. 
Suppose $a_e\in E$,  $a_h\in \mathcal{H}$,  $f_e\in l $ is a focus of $E$, and $f_h\in l$ a focus of $\mathcal{H}$.
If  $a_h$ and $f_e$ lie in the same connected component of $\mathcal{H}$, then
\[a_ea_h - a_hf_e - a_ef_h \ \ \text{is\ constant.}\]
 
\end{theorem}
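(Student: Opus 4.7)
My plan is to reduce the 4-dimensional identity to the 3-dimensional Theorem~\ref{unoyuno} by slicing with a plane through the common axis $l$. If $a_h \in l$, then $a_h$ is a vertex of $\mathcal{H}$ and the identity collapses to arithmetic on the axis; otherwise let $\Gamma' \subset \Gamma^3$ be the unique 2-plane spanned by $l$ and $a_h$. By the definition of focal quadrics, the ellipse $E$ and the hyperbola $H := \mathcal{H} \cap \Gamma'$ form a focal conic pair in the 3-space $\Gamma^2 + \Gamma'$. The classical focal-conic incidence places the focus $f_e$ of $E$ at a vertex of $H$, so $f_e \in H$ and, by the hypothesis transferred from $\mathcal{H}$ to $H$, it lies on the branch carrying $a_h$; symmetrically $f_h$ lies at a vertex of $E$, so $f_h \in E$.

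Applying Theorem~\ref{unoyuno} to $(E, H)$ with ellipse-points $a_e, f_h \in E$ and hyperbola-points $a_h, f_e \in H$ (both on the common branch) produces
\[
a_e a_h + f_h f_e \;=\; a_h f_h + a_e f_e,
\]
which rearranges to
\[
a_e a_h - a_h f_h - a_e f_e \;=\; -\, f_e f_h.
\]
The right-hand side depends only on the fixed distance between the two focal points on $l$ and is therefore constant. This is exactly the constancy asserted by the theorem: the displayed expression matches Theorem~\ref{unoyuno}'s natural pairing, in which each moving point is measured against the classical focus of its own conic (the vertex of the dual conic lying on $l$), so the two cross-distances appearing in $a_ea_h - a_hf_e - a_ef_h$ must be read with this focal-conic identification.

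The main step requiring care is the transfer of the connected-component hypothesis: Theorem~\ref{unoyuno} demands that $a_h$ and $f_e$ share a branch of $H$, while the hypothesis of the present theorem gives only a common component of $\mathcal{H}$. This is handled by the observation that any 2-plane $\Gamma'$ containing the axis of revolution $l$ meets each connected component of the (two-sheeted) hyperboloid $\mathcal{H}$ in precisely one connected branch of $H$, so component-membership in $4$D passes directly to branch-membership in $3$D and Theorem~\ref{unoyuno} applies unchanged.
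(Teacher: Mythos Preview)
Your proof is correct and follows exactly the paper's route: slice by a $2$-plane through $l$ and $a_h$, recognize $E$ and $H=\mathcal{H}\cap\Gamma'$ as focal conics in the resulting $3$-space, and apply Theorem~\ref{unoyuno} with $b_e=f_h\in E$ and $b_h=f_e\in H$ to obtain $a_ea_h - a_hf_h - a_ef_e = -f_hf_e$. You were right to flag the subscripts: the displayed expression in the theorem statement (and in the paper's one-line proof) reads $a_hf_e$ and $a_ef_h$, but the identity that actually falls out of Theorem~\ref{unoyuno}---and the one invoked downstream in Lemma~\ref{constante}, where $\x f_h$ and $\y f_e$ appear with $\x\in\mathcal{H}$, $\y\in E$---pairs each moving point with the focus of \emph{its own} conic; this is a typographical slip in the statement, not a defect in your argument.
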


\begin{proof}
 Let $\Gamma^2$ be the $2$-dimensional subspace of $\R^4$ containing $l$ and $a_ea_h$.  Then $\Gamma^2 \cap \mathcal{H}$ is a hyperbola $H$ which is focal with the ellipse $E$.  Furthermore,  $a_e\in E$,  $a_h\in H$, $f_h \in E$ and $f_e \in H$.    
Since  $a_h, f_e$ lie in the same component of $\Gamma \cap \mathcal{H}$, then by Theorem \ref{unoyuno},
$a_ea_h-a_hf_e - a_ef_h= -(f_hf_e)$ is constant. % $a_ea_h-a_hf_h - a_ef_h= f_hf_e$   
\end{proof}

Given the ellipse $E$ and the hyperbola $H$, focal conics, with common axis the line $l$ in $\R^3$, it was observed in \cite{AMO} that it is always possible to embed a regular tetrahedron $\Delta^3$ with vertices, say $\{p_1, p_2, p_3, p_4\}$, in such a way the following three conditions hold:
%\medskip
\begin{enumerate}[1)]
\item  the segment $p_1p_2$ is orthogonal to $l$ passes through the midpoint $p_{12}$ and  $\{p_1, p_2\}\subset E$,
\item the segment $p_3p_4$ is orthogonal to $l$ passes through the midpoint  $p_{34}$ and $\{p_3, p_4\}\subset H$, 
\item there is a subarc $H_{34}$ of the hyperbola $H$ joining $p_3$ with $p_4$ and a subarc $E_{12}$ of the ellipse $E$ joining $p_1$ with $p_2$, such that $H_{34}\cap \Delta_3=\{p_3,p_4\}$ and $E_{12}\cap \Delta_3=\{p_1,p_2\}$.
\end{enumerate}

\noindent In this case (after a permutation of the vertices),  we will say that \emph{$\Delta^3$ is focally embedded in $E,H$}. See Figure \ref{confo}.

\begin{center}   
\begin{figure}[h!]
\includegraphics[scale=.5]{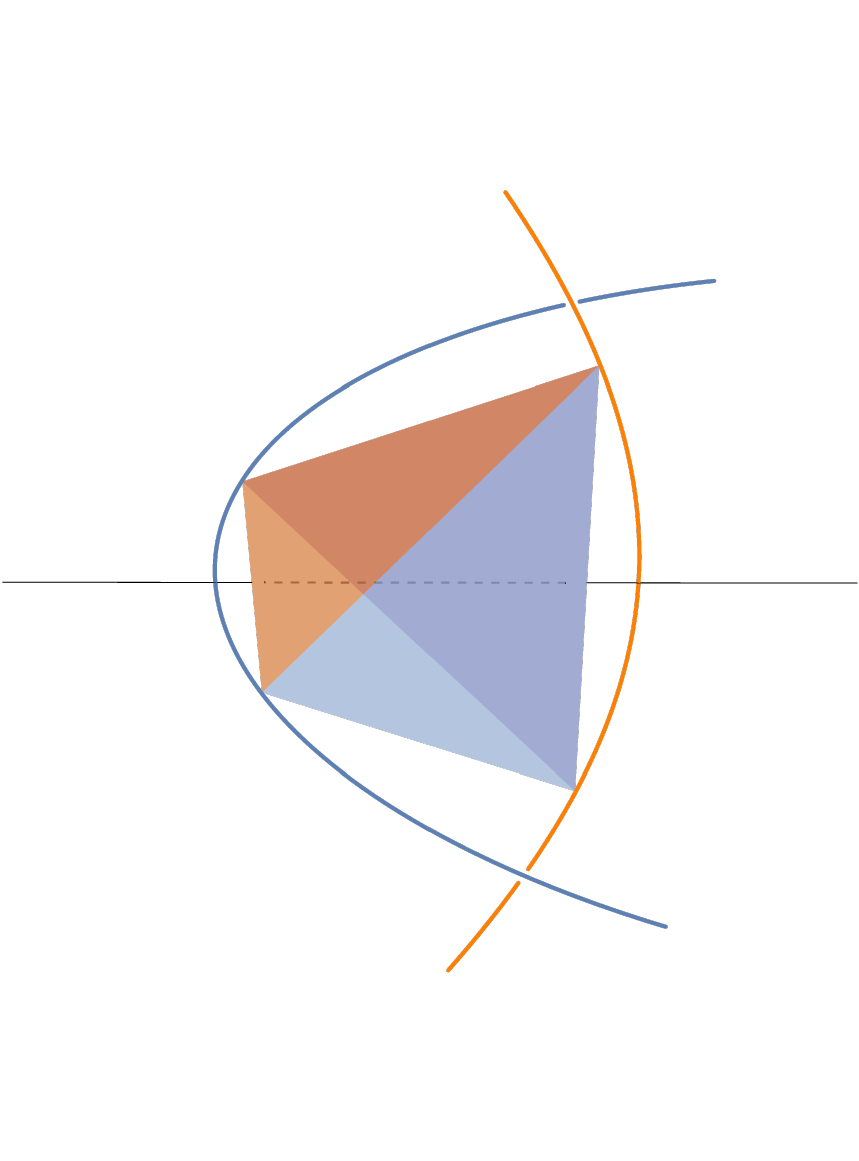}\put(-44,52){\textcolor{blue}{$E$}}\put(-100,235){\textcolor{orange}{$H$}}\put(6,137){$l$}
\put(-65,85){$p_4$}\put(-60,195){$p_3$}\put(-50,150){$H_{34}$} \put(-153,108){$p_1$} \put(-160,165){$p_2$}\put(-170,145){$E_{12}$}
\caption{Focal embedding of a tetrahedron in $E$ and $H$ in $\R^3$ }\label{confo}
\end{figure}
\end{center}

\section{Steiner Chains}\label{sec:stainer}

\subsection{Elliptic Steiner chains}\label{Elliptic stainer}

Let $C_1$ and $C_2$ be two circles in the plane with centers at $c_1$ and $c_2$, respectively, $c_1\not= c_2$. Suppose
$C_1\cap C_2=\{a,b\}$, where $ab$ is orthogonal to $c_1c_2$ and assume the line through $ab$ leaves $c_1$ and $c_2$ on the same side. Consider the collection of circles $\{C_{\alpha}\}$  tangent to both $C_1$ and $C_2$ inside their symmetric difference 
(Figure \ref{ESTCH}).  Then it is easy to prove that the centers of all these circles lie on the ellipse $E$ with foci at  $\{c_1, c_2\}$, containing the points $\{a,b\}$. 

Conversely, let an ellipse $E$ with foci $\{c_1, c_2\}$ in the plane and a chord $ab$ of $E$ orthogonal to $c_1c_2$ be given such that the line through $ab$ leaves $c_1$ and $c_2$ on the same side. Let $C_i$ be the circles centered at $c_i$ intersecting at both points  $\{a,b\}$.  Assume that, say, $c_2$ is the center that  is further away from $ab$ than $c_1$. Then \emph{the elliptic Steiner chain of disks, centered at $E$ and based in $ab$} is the collection of disks $\{D_{\alpha}\}$  tangent to both $C_1$ and $C_2$ inside $C_1\setminus C_2$. 

Assume the ellipse $E$ is lying in a plane $\Gamma^2\subset \mathbb{R}^4$. Given an elliptic Steiner chain of disks centered at $E$ and based in $ab$,  we will call (with a slight abuse of language) the \emph{elliptic Steiner chain of $4$-balls centered at $E$ and based in $ab$} denoted by $\Omega_E$  the collection of $4$-balls  whose intersection with $\Gamma^2$ is precisely the disks of the original Steiner chain. (See  \cite[pp.~51-54]{O} for more information about Steiner chains.)

%If the plane $H$ contained the ellipse $E$ is contained at $R^n$, then, abusing language we will call \emph{the elliptic Steiner chain of $n$-balls, centered at $E$ and based in $ab$} to the collection of $n$-balls whose intersection with $H$ is precisely a disk of the original Steiner chain.   (For more about Steiner chains, see  \cite[pp.~51-54]{O}.)

\begin{figure}[h!]
\begin{center}
\includegraphics[scale=.4,angle=-90]{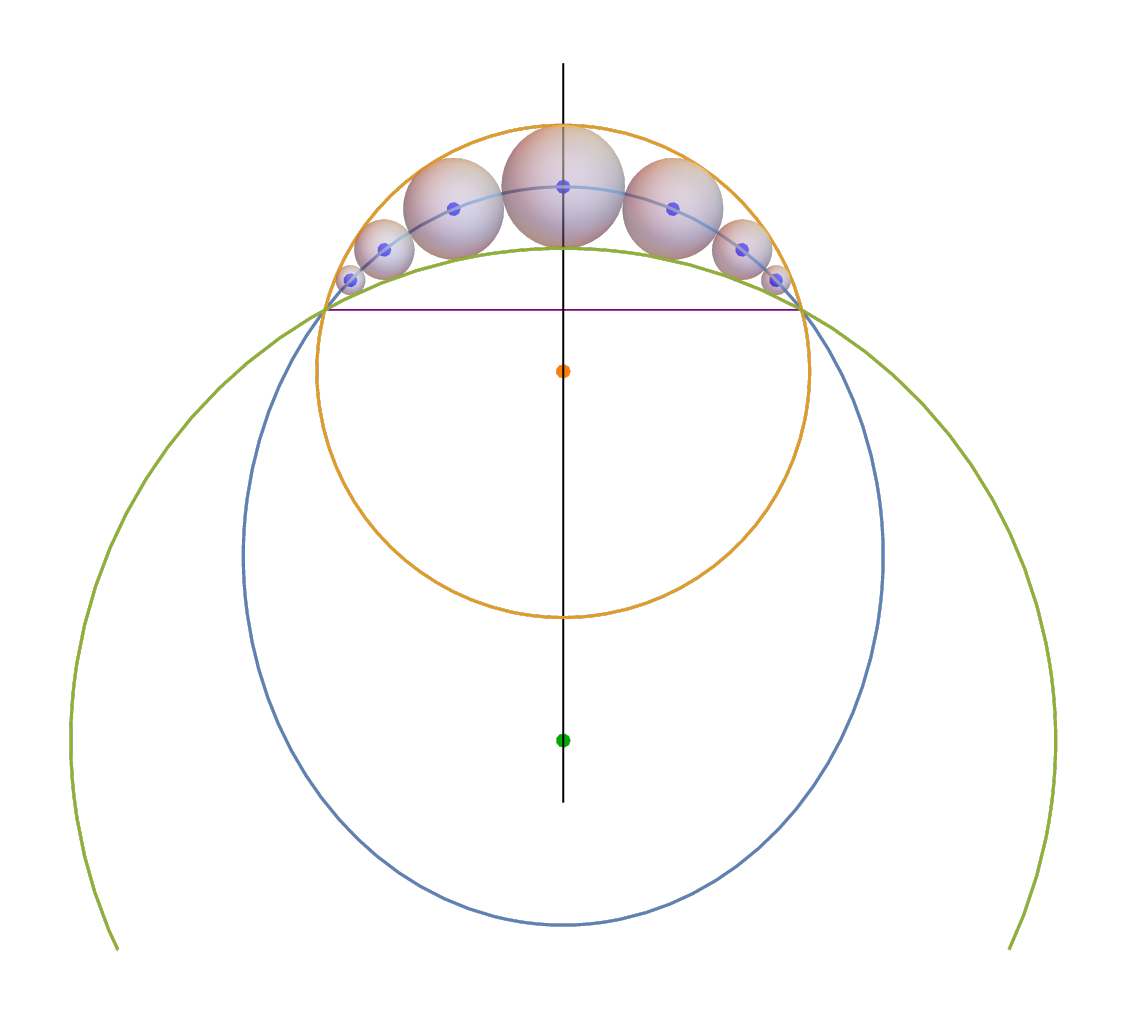}\put(-145,-100){\textcolor{olive}{$c_2$}}\put(-75,-100){\textcolor{orange}{$c_1$}}\put(-190,-110){\textcolor{blue}{$E$}}\put(-60,-60){$a$}\put(-60,-165){$b$}
\label{ESTCH}
\caption{Elliptic Steiner chain }\label{ESTCH}
\end{center}
\end{figure}

\subsection{Hyperbolic Steiner chains}\label{hiperbolicStainer}

Similarly, let $C_1$ and $C_2$ be two circles in the plane with centers at $c_1$ and $c_2$, respectively, $c_1\not= c_2$. Suppose
$C_1\cap C_2=\{a,b\}$, where $ab$ is orthogonal to $c_1c_2$ and assume the line through $ab$ separates $c_1$ from $c_2$. Consider the collection of circles $\{C_{\alpha}\}$  tangent to both $C_1$ and $C_2$ inside $C_1$ and $C_2$. Then it is easy to prove that the centers of all these circles lie on the hyperbola $H$ with foci at  $\{c_1, c_2\}$ (Figure \ref{HSTCH}).   

Conversely, let a hyperbola $H$ with foci $\{c_1, c_2\}$ in the plane and a chord $ab$ of $H$  orthogonal to $c_1c_2$ be given such that the line through $ab$ separates $c_1$ from $c_2$.  Let $C_i$ be the circles centered at $c_i$ intersecting at both points  $\{a,b\}$.  The collection of disks $\{D_{\alpha}\}$  tangent to both $C_1$ and $C_2$ inside $C_1$ and $C_2$ is called \emph{the hyperbolic Steiner chain of disks centered at $E$ and based in $ab$}.  

\begin{figure}[h!]
\begin{center}
\includegraphics[scale=.4]{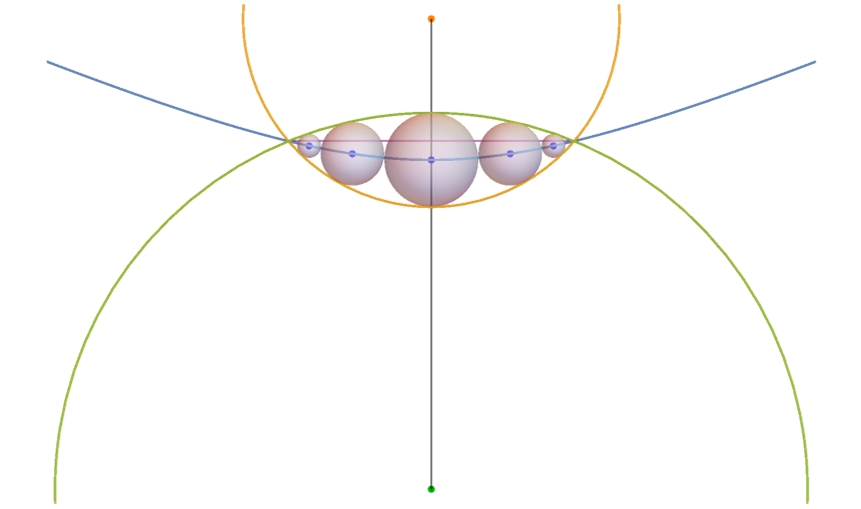}\put(-100,0){\textcolor{olive}{$c_1$}}\put(-100,115){\textcolor{orange}{$c_2$}}\put(0,100){\textcolor{blue}{$H$}}\put(-143,75){$a$}\put(-70,75){$b$}
\caption{Hyperbolic Steiner Chain }\label{HSTCH}
\end{center}
\end{figure}

Next, we will generalize this notion of the hyperbolic Steiner chain to our needs in dimension $4$. Let  $\mathbb{S}_1$ and $\mathbb{S}_2$ be two $2$-spheres in euclidean $3$-space with centers at $c_1$ and $c_2$ such that $\mathbb{S}_1\cap \mathbb{S}_2$ is the circle $C$, and such that the plane containing $C$ separates $c_1$ from $c_2$. Consider the collection of $2$-spheres $\{\delta_{\alpha}\}$  tangent to both $\mathbb{S}_1$ and $\mathbb{S}_2$ inside $\mathbb{S}_1$ and $\mathbb{S}_2$. Then it is not difficult to observe that the centers of all these spheres lie in the hyperboloid of revolution $\mathcal{H}$ with foci at  $\{c_1, c_2\}$, containing the circle $C$.   

Conversely, let a hyperboloid of revolution $\mathcal{H}$ be given with foci at $\{c_1, c_2\}$ in some $3$-space $\Gamma ^3$ and $C$ a circle contained in $\mathcal{H}$ such that the plane through $C$ is orthogonal to $c_1c_2$ and separates $c_1$ from $c_2$. Let $\mathbb{S}_i$ be the $2$-sphere centered at $c_i$, $i=\{1,2\}$ both containing the circle $C$. Then, \emph{the hyperbolic Steiner chain of 3-balls centered at $\mathcal{H}$ and based in $C$} is the collection of $3$-balls $\{B_{\alpha}\}$  tangent to both $\mathbb{S}_1$ and $\mathbb{S}_2$ inside $\mathbb{S}_1$ and $\mathbb{S}_2$ (see Figure \ref{3-hyperbolic}). 

Now assume that the hyperboloid of revolution $\mathcal{H}$ is lying on a $3$-space, say $\Gamma^3$, that is contained in $\mathbb{R}^4$. Then again by a slight abuse of language  we will call the collection of $4$-balls whose intersection with $\Gamma^3$ is precisely the $3$-balls of the original Steiner chain, \emph{the hyperbolic Steiner chain of $4$-balls} centered at $\mathcal{H}$, based in $C$ and denoted by $\Omega_{\mathcal{H}}$. 

\begin{figure}[h!]
\begin{center}
\includegraphics[scale=.36,angle=90]{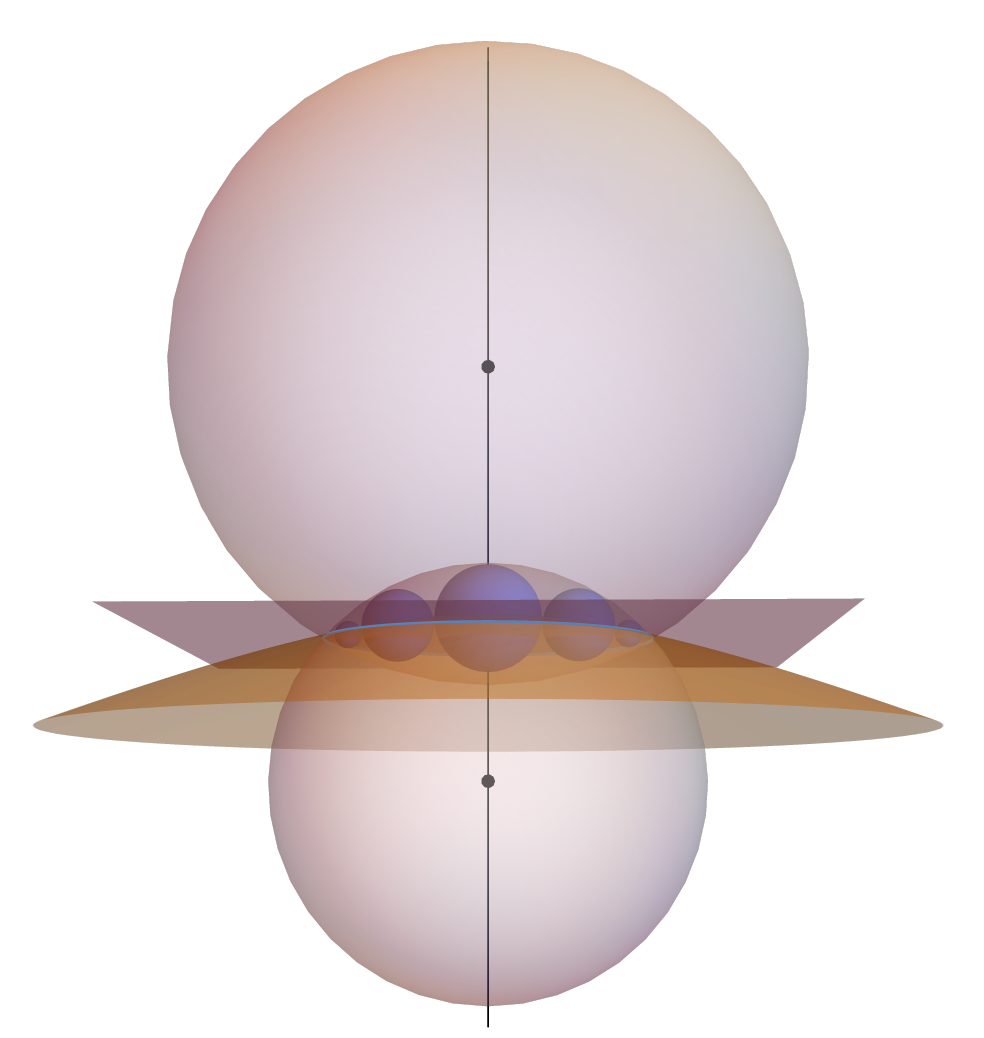}\put(-46,90){$c_2$}\put(-118,90){$c_1$}\put(-20,150){$\Gamma^3$}\put(-50,10){$\mathcal{H}$}
\caption{Hyperbolic Steiner chain}\label{3-hyperbolic}
\end{center}
\end{figure}

\section{Focally Embedded Simplices in Focal Quadrics }\label{sec:embededsimplices}

Assume $\Delta^4$  is a regular $4$-simplex  with vertices $\{p_1, p_2, p_3, p_4, p_5\}$. For every edge $p_ip_j$, let $p_{ij}$ be the midpoint of $p_ip_j$, and for every triangle with vertices $p_i,p_j,p_k$, let $p_{ijk}$ denote its barycenter.

Let $E$ be an ellipse and $\mathcal{H}$ an hyperboloid of revolution in $4$-space, and assume they are focal quadrics with common axes $l$.
We say that \emph{$\Delta^4$ is focally embedded in $(E,\mathcal{H})$} if (after some permutation of the vertices) the following three conditions hold:
\begin{enumerate}[1)]
\item the segment $p_1p_2$ is orthogonal to $l$, passes through the midpoint $p_{12}$, and  $\{p_1, p_2\}\subset E$;
\item the equilateral triangle $p_3p_4p_5$ is orthogonal to $l$ at  $p_{345}$ and $\{p_3, p_4,p_5\}\subset H$; and
\item there is a piece of surface $\mathcal{H}_{345}$ of the hyperboloid of revolution $\mathcal{H}$ whose boundary is the circumcircle of the triangle $p_3p_4p_5$, and a subarc $E_{12}$ of the ellipse $E$ joining $p_1$ with $p_2$ such that 
$\mathcal{H}\cap \Delta^4=\{p_3,p_4, p_5\}$ and  $E_{12}\cap \Delta^4=\{p_1,p_2\}$.
\end{enumerate}
  
In this section we use the notions of focal quadrics and Steiner chains to prove that the envelope of a Steiner chain of $4$-balls  whose centers lie in a pair of focal quadrics gives rise to two ``opposite" $3$-dimensional submanifolds of $\R^4$ in which the property of constant width is achieved.

\subsection{A focally embedded regular $4$-simplex}
\label{subsec:focallyembedded}
 
Let us assume for simplicity  that the  $3$-space has coordinates $(x,y,z)$ and that the ellipse $E$ and the hyperbola $H$ have the $x$-axis as a common axis $l$. The ellipse $E$ is contained in the $\{x,z\}$-plane and the hyperbola  $H$ in the $\{x,y\}$-space. We are going to choose for convenience the following pair of focal conics, whose equations are:
\begin{equation}\label{EHq}
E\colon z^2=\frac{1}{2} \bigg(  1- \frac{2x^2}{3}\bigg) \quad\quad \text{ and } \quad\quad 
H\colon y^2=\frac{1}{2}\bigg(x^2-1\bigg),
\end{equation} 
with foci at $(1,0,0)$ and $(-1,0,0)$ in the case of $E$ (see Figure \ref{Delta5confocally} left) and   
with foci at $(\sqrt{\frac{3}{2}},0,0)$ and $(-\sqrt{\frac{3}{2}},0,0)$ in the case of $H$ (see Figure \ref{Delta5confocally} right).

%\section{The special case $a^2=3/2$, the first miracle}.
%\medskip

\begin{lemma}\label{embeding3d}
Given the ellipse $E$ and the hyperbola $H$ as above, there is a set of four points $\Tilde{\Delta}= \{p_1, p_2, p_3, p'_3\}$ in such a way the following conditions hold:
%\medskip
\begin{enumerate}[1)]
\item The segment $p_1p_2$ is orthogonal to $l$ and $\{p_1, p_2\}\subset E$,
\item the segment $p_3p'_3$ is orthogonal to $l$ and $\{p_3, p'_3\}\subset H$,
\item there is a subarc $H_{34}$ of the hyperbola $H$ joining $p_3$ with $p'_3$ and a subarc $E_{12}$ of the ellipse $E$ joining $p_1$ with $p_2$ such that $H_{34}\cap \Tilde{\Delta}=\{p_3,p_4\}$ and $E_{12}\cap \Tilde{\Delta}=\{p_1,p_2\}$, and
\item $p_1p_3=p_1p_2=p_3p_2=p_3'p_1=p'_3p_2$, and \, $\frac{\sqrt{3}}{2}p_3p'_3=p_1p_2.$
\end{enumerate}
\end{lemma}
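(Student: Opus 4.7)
The plan is to exploit the symmetries of the focal pair $(E,H)$ to reduce the problem to an explicit algebraic system in four unknowns, solve it, and then verify condition 3 topologically.

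Because $E$ lies in the $\{x,z\}$-plane and is invariant under $z\mapsto -z$, the requirement that $p_1p_2$ be a chord of $E$ orthogonal to $l$ forces the parametrization $p_1=(x_0,0,z_0)$, $p_2=(x_0,0,-z_0)$; analogously, the requirement on $p_3,p'_3$ forces $p_3=(x_1,y_1,0)$ and $p'_3=(x_1,-y_1,0)$, which automatically lie on the same branch of $H$. Conditions 1 and 2 are thus built into the parametrization. Moreover, in this form the four ``mixed'' distances $p_ip_j$ (for $i\in\{1,2\}$ and $j\in\{3,3'\}$) all coincide with $\sqrt{(x_0-x_1)^2+y_1^2+z_0^2}$, so the chain of five equal lengths in condition 4 collapses to the single scalar equation $p_1p_3=p_1p_2$.

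Condition 4 then amounts to
\[(x_0-x_1)^2+y_1^2=3z_0^2, \qquad \sqrt{3}\,y_1=2z_0,\]
which, together with the defining equations of $E$ and $H$ from \eqref{EHq} applied at $p_1$ and $p_3$, yields a system of four equations in the four unknowns $(x_0,z_0,x_1,y_1)$. Substituting $y_1^2=4z_0^2/3$ into the hyperbola equation gives $x_1^2=1+8z_0^2/3$, the ellipse equation gives $x_0^2=3/2-3z_0^2$, and plugging both into the cross-term expansion of $(x_0-x_1)^2=5z_0^2/3$ collapses the whole system to a single quadratic in $z_0^2$. The crux of the argument is to verify that this quadratic admits a positive root with $0<z_0^2<1/2$, so that all four unknowns are real and $p_1$, $p_3$ fall on admissible portions of their respective quadrics; one then checks that the value of $x_0x_1$ forced by the cross term is realizable by choosing $x_0$ and $x_1$ of the same sign.

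For condition 3, let $E_{12}$ be either of the open arcs of $E$ joining $p_1$ to $p_2$, and $H_{34}$ the subarc of the branch of $H$ through $p_3,p'_3$ joining these two points. Since $E\subset\{y=0\}$ and $H\subset\{z=0\}$, one has $E\cap H\subset l$; the explicit equations in \eqref{EHq} show the ellipse meets $l$ at $x=\pm\sqrt{3/2}$ while the hyperbola meets $l$ at $x=\pm 1$, so $E\cap H=\emptyset$. Hence $E_{12}\cap\Tilde{\Delta}=\{p_1,p_2\}$ and $H_{34}\cap\Tilde{\Delta}=\{p_3,p'_3\}$ follow automatically. The main obstacle is thus the algebraic existence statement in the previous paragraph; the rest is symmetry and the disjointness of the two quadrics.
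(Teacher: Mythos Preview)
Your approach is correct and essentially the same as the paper's: both parametrize the four points symmetrically on the two quadrics, reduce condition 4 to an explicit algebraic system, and solve a resulting quadratic (yours in $z_0^2$, the paper's in $x_0^2$). Your treatment is in fact slightly cleaner on two counts: you obtain the equality of the four ``mixed'' distances directly from the reflective symmetry of the configuration, whereas the paper invokes an external corollary from \cite{AMO}; and you explicitly dispatch condition 3 via the observation $E\cap H=\emptyset$, which the paper's proof does not address.
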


\begin{proof}

%\noindent Note that $E$ is an ellipse in the $\{x,z\}$-plane with foci at $(1,0,0)$ and $(-1,0,0)$ and $H$ is an hyperbola in the   
%$\{x,y\}$-space with foci at $(\sqrt{3/2},0,0)$ and $(-\sqrt{3/2},0,0)$ (See figure \cite{4-confocal}).

We shall first prove that for every $1<x_0<\sqrt{\frac{21}{17}}$ 
there is $x_1\in \big(x_0, \sqrt{\frac{3}{2}}\big)$ such that if 
 $p_1= (x_1,0,z_1), \quad p_2= (x_1,0,-z_1),\quad
p_3=(x_0, y_0,0),\quad  p'_3=(x_0, -y_0,0),$
where $z_1=\sqrt{\frac{1}{2} \big(  1- \frac{2x_1^2}{3}\big)}$ and 
$y_0=\sqrt{\frac{1}{2} \big(x_0^2-1\big)}$, then 
\begin{gather*} p_1p_3=p_2p_3=p_1p_3'=p_2p_3',\\
\frac{\sqrt{3}}{2}p_3p'_3=p_1p_2,\\
p_1p_3=p_1p_2.\end{gather*}

The first equalities follow from the fact that $(E,H)$ is a pair of focal conics and Corollary 3.4 of \cite{AMO}. To achieve the second, choose $1<x_0<\sqrt{\frac{21}{17}}$. 
Since we need $\frac{\sqrt{3}}{2}p_3p'_3=p_1p_2$,  observe that   %  \mbox{  and  } p_3p_1=p_1p_2.$$ 

\begin{equation}\label{prop}
\frac{\sqrt{3}}{2}y_0=z_1;
\end{equation}
hence, $3(x_0^2-1)=4\left( 1-\frac{2x_1^2}{3}\right) $. Therefore,
\begin{equation}\label{x1} 
x_1^2=\frac{-9x_0^2+21}{8}.
\end{equation}
\bigskip

On the other hand, if we want that $p_3p_1=p_1p_2$, then
\[\big(x_1-x_0\big)^2=\frac{5}{4}y_0^2=\frac{5}{8}\big(x_0^2-1\big).\]

In particular, 
\begin{equation}\label{x1-x0}
  x_1-x_0=\frac{\sqrt{5}}{2}y_0.   
\end{equation}

Furthermore, $x_1^2+x_0^2=\frac{-x_0^2+21}{8}$, then 
\[ \frac{-6x_0^2 + 26}{8}=2x_0x_1.\]
Then by (\ref{x1}), 
\[ 4x_0^2x_1^2= x_0^2\left( \frac{-9x_0^2+21}{2}\right).\]

Hence $t=x_0^2>1$ is the unique positive solution of the quadratic equation 
\[ \left(   \frac{-6t + 26}{8}       \right)^2=\frac{-9t^2+21t}{2},\]
which yields
\begin{equation}\label{valx2}
x_0^2=\frac{41-4\sqrt{10}}{27},
\end{equation}
and therefore, by \ref{x1}, we have the following solution to our conditions:
\begin{equation}\label{valyx}
y_0^2=\frac{7-2\sqrt{10}}{27} \quad \mbox{ and }\quad x_1^2=\frac{11+2\sqrt{10}}{12}.
\end{equation} 

%\qed
\end{proof}
\medskip

The next equality, which follows immediately from  (\ref{valyx}), will be of vital importance later: 

\begin{equation}\label{val32}
x_1^2+ \frac{9}{4}y_0^2= \frac{3}{2}.
\end{equation}
%\bigskip 

As a consequence of Lemma \ref{embeding3d} we observe next that given a pair of focal quadrics, an ellipse $E$, and $\mathcal{H}$, a hyperboloid of revolution in $4$-space  with common axis the $x$-axis, it is always possible  to focally embed a regular $4$-simplex $\Delta^4$ in $(E,\mathcal{H})$.  

Again, for simplicity, we may assume that 
the $x$-axis is used to obtain the hyperboloid of revolution $\mathcal{H}$ such that the pair $(E,\mathcal{H})$
is a pair of focal quadrics, assuming the order of the variables in $\R^4$ is $(x,y,z,w)$ where their equations are given by (see Figure \ref{Delta5confocally}):  

\begin{equation}\label{EH}
E\colon z^2=\frac{1}{2} \bigg(  1- \frac{2x^2}{3}\bigg) \quad\quad \text{ and } \quad\quad 
\mathcal{H}\colon y^2=\frac{1}{2} \bigg(x^2-1\bigg)-w^2.
\end{equation}

%\begin{figure}[h!]
%\begin{center}
%\includegraphics[scale=.42]{}
%\caption{Focal embbeding}\label{focal}
%\end{center}
%\end{figure}

\begin{lemma}
Given the ellipse $E$ and the hyperboloid of revolution $\mathcal{H}$ as above, there is a set of five points $\Delta_4=\{p_1, p_2, p_3, p_4, p_5\}$ that are the vertices of a regular 
$4$-simplex focally embedded in $(E, \mathcal{H})$. %of length $2z_1=\frac{\sqrt{7-2\sqrt{10}}}{3}$ 
\end{lemma}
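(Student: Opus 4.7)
The strategy is to extend Lemma \ref{embeding3d} to $\R^4$ by inflating the chord $\{p_3, p_3'\}$ on the hyperbola $H$ into an equilateral triangle inscribed in a parallel of the hyperboloid of revolution $\mathcal{H}$. Starting from the values $x_0, y_0, x_1, z_1$ produced by Lemma \ref{embeding3d}, I set $p_1 = (x_1, 0, z_1, 0)$, $p_2 = (x_1, 0, -z_1, 0)$, $p_3 = (x_0, y_0, 0, 0)$, and define $p_4, p_5$ as the images of $p_3$ under the rotations by $\pm 2\pi/3$ about the $x$-axis acting on the $\{y,w\}$-plane, i.e.\ $p_4=(x_0,-y_0/2,0,\sqrt{3}\,y_0/2)$ and $p_5=(x_0,-y_0/2,0,-\sqrt{3}\,y_0/2)$. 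Equation (\ref{EH}) shows that the circle
\[ C=\{(x_0,y,0,w): y^2+w^2=y_0^2\} \]
is a parallel of $\mathcal{H}$, so $\{p_3, p_4, p_5\} \subset \mathcal{H}$, while $\{p_1, p_2\}\subset E$ is inherited from Lemma \ref{embeding3d}.

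The main step is checking that the ten edges of this configuration have a common length. The three sides of the equilateral triangle inscribed in $C$ have length $\sqrt{3}\,y_0$; and condition $(4)$ of Lemma \ref{embeding3d}, together with $|p_3 p_3'|=2y_0$, gives $|p_1 p_2|=2z_1=\sqrt{3}\,y_0$. For the six cross-edges, observe that $p_1$ and $p_2$ both have $y=w=0$, so they lie on the axis $l$ of revolution of $\mathcal{H}$; consequently
\[ |p_i p_j|^2=(x_1-x_0)^2+z_1^2+y_0^2 \qquad \text{for all } i\in\{1,2\},\ j\in\{3,4,5\}. \]
Lemma \ref{embeding3d} already guarantees that this quantity equals $|p_1 p_3|^2=|p_1 p_2|^2=3y_0^2$, so all six cross-edges match and $\{p_1,\ldots,p_5\}$ is a regular $4$-simplex.

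Conditions $(1)$ and $(2)$ of focal embedding are then immediate from the coordinates, with $p_{12}=(x_1,0,0,0)$ and $p_{345}=(x_0,0,0,0)$ lying on $l$ and both affine spans orthogonal to $l$. For condition $(3)$ I take $E_{12}$ to be the subarc of $E$ from Lemma \ref{embeding3d} and $\mathcal{H}_{345}$ to be the cap of the branch of $\mathcal{H}$ bounded by $C$ (on the side containing the vertex of that branch). The subtlety I anticipate will be verifying that $\mathcal{H}_{345}\cap \Delta^4=\{p_3,p_4,p_5\}$; the cleanest route is to slice by the hyperplane $\{w=0\}$, in which $\mathcal{H}_{345}$ restricts to the subarc $H_{34}$ of Lemma \ref{embeding3d} and $\Delta^4$ to the tetrahedron $\Tilde{\Delta}$, and then to exploit the rotational symmetry of $\mathcal{H}_{345}$ about $l$ together with the $3$-fold rotational symmetry of $\Delta^4$ permuting $\{p_3,p_4,p_5\}$ to propagate the planar intersection statement to the whole cap.
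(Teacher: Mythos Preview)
Your construction is exactly the paper's: rotate $p_3$ by $\pm 2\pi/3$ about the $x$-axis in the $\{y,w\}$-plane to obtain $p_4,p_5$, then invoke the relations of Lemma~\ref{embeding3d} to check that all ten edges have length $\sqrt{3}\,y_0=2z_1$; the paper simply writes down the same coordinates and declares the regularity ``easy to see.'' You actually go further than the paper by verifying the focal-embedding conditions (1)--(3); one small correction to your sketch for (3) is that $\Delta^4\cap\{w=0\}$ is the tetrahedron $\conv\{p_1,p_2,p_3,\tfrac{1}{2}(p_4+p_5)\}$ (the fourth vertex has $y$-coordinate $-y_0/2$, not $-y_0$) rather than $\Tilde\Delta$ itself, but since $\tfrac{1}{2}(p_4+p_5)\in p_3p_3'$ this tetrahedron is contained in $\Tilde\Delta$ and your slicing-plus-symmetry argument still goes through.
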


\begin{proof}    

Consider the following set of vertices: %a $5$-simplex with vertices $\{p_1,p_2, p_3,p_4,p_5\}$ is confocally embedded in $(E,\mathcal{H})$, where the coordinates of the vertices in dimension $4$ are as follow:
\begin{gather*}
p_1= (x_1,0,z_1,0), \quad p_2= (x_1,0,-z_1,0),\\
p_3=(x_0,y_0, 0,0),\quad  p_4=(x_0, -\frac{1}{2}y_0,0,-\frac{\sqrt{3}}{2}y_0),\quad 
p_5=(x_0, -\frac{1}{2}y_0,0, \frac{\sqrt{3}}{2}y_0),\end{gather*}
where  $z_1=+\sqrt{\frac{1}{2} \big(  1- \frac{2x_1^2}{3}\big)}$, 
$y_0=+\sqrt{\frac{1}{2} \big(x_0^2-1\big)}$, $x_0$ and $x_1$ as in Lemma \ref{embeding3d}. Then it is easy to see that 
$\{p_1,p_2, p_3,p_4,p_5\}$ is a regular simplex of length  $2z_1$. (See Figure \ref{Delta5confocally}). 

\end{proof}

\begin{figure}[h!]
\centering
\begin{subfigure}[b]{0.5\linewidth}
\includegraphics[width=\linewidth]{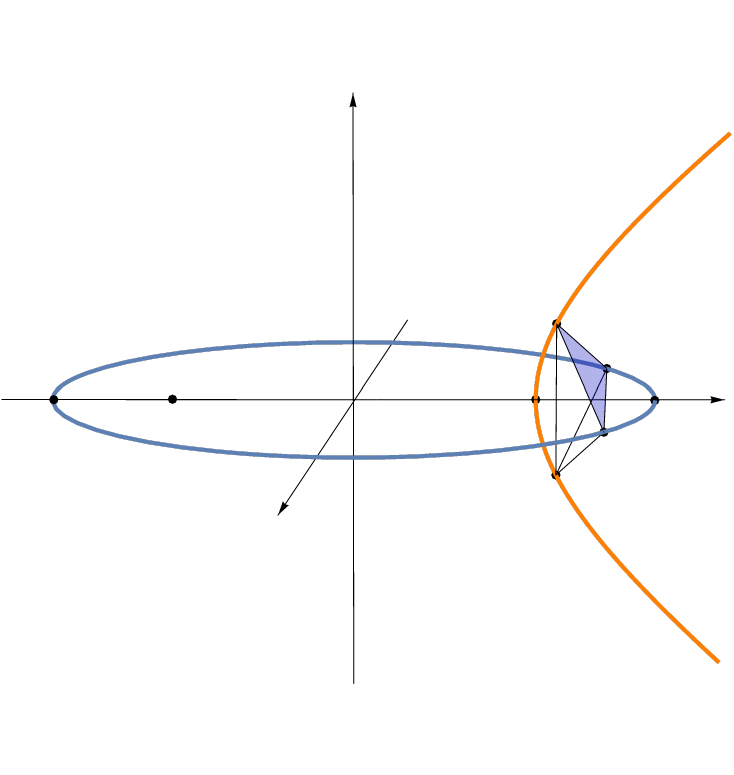}\put(2,107){$x$}\put(-106,196){$y$}\put(-138,68){$z$}\put(-40,90){$p_1$}\put(-38,125){$p_2$}\put(-62,136){$p_3$}\put(-65,82){$p_3'$}\put(-25,95){\small$(\frac{\sqrt{3}}{2},0,0)$}\put(-90,100){\small$(1,0,0)$}\put(-164,100){\small$(-1,0,0)$}
%\caption{Vista lateral}
%\label{fig:3dEllipse}
\end{subfigure}
\begin{subfigure}[b]{0.45\linewidth}
\includegraphics[width=\linewidth]{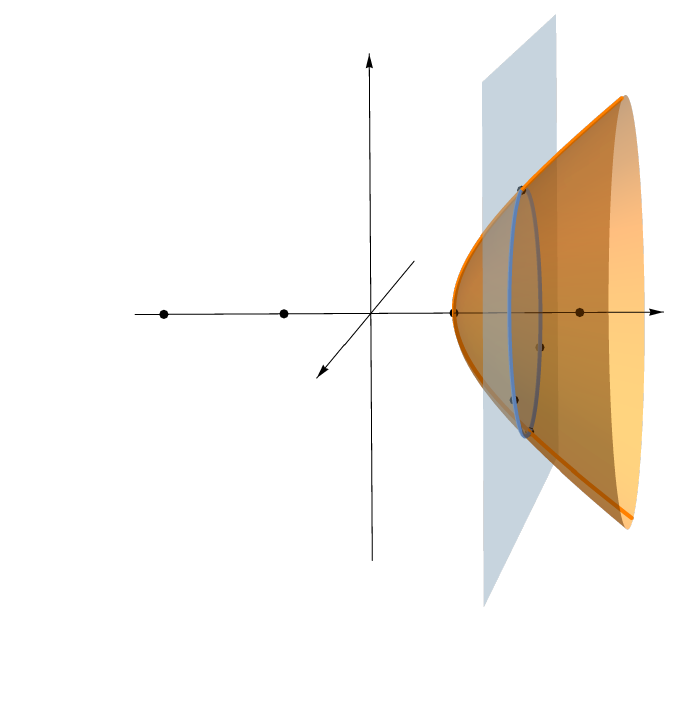}\put(-2,107){$x$}\put(-86,180){$y$}\put(-110,84){$w$}\put(-54,150){$p_3$}\put(-60,80){$p_4$}\put(-38,98){$p_5$}\put(-50,66){$p_3'$}\put(-38,118){\small$(\frac{\sqrt{3}}{2},0,0)$}\put(-90,118){\small$(1,0,0)$}
%\caption{Vista aérea}
%\label{3dhyperboloide}
\end{subfigure}
\caption{A focally embedded pair $(E,\mathcal{H})$}\label{Delta5confocally}
\label{fig:westminster}
\end{figure}

\begin{remark}\label{nota}
We have been assuming for simplicity that the equations of the ellipse and the hyperbola are parametrized as in Equations \eqref{EHq}. For the general case, up to congruence, any pair of focal quadrics may be parametrized as in Equation \eqref{ellipse-hyperbola}, and up to homothety, 
 by any real number $1<a<\infty$, as follows:
\begin{equation}\label{EHa}
E\colon z^2=(a^2-1) \big(  1- \frac{x^2}{a^2}\big) \quad\quad \text{ and } \quad\quad 
H\colon y^2=(a^2-1)\big(x^2-1\big),
\end{equation} 
where $1=c^2=b^2-a^2$.

%any pair of focal quadrics may be parametrized by any real number $1<a<\infty$, (with $b^2=a^2-%1$) as follows: $a^2={1\over{2}}$, and $b^2={3\over{2}}$. 

Then, as in the above section, we can prove that 
for every $1<a<\infty$, there are unique numbers $1<x_0<x_1<a$ such that if 
\begin{gather*} p_1= (x_1,0,0,z_1), \quad p_2= (x_1,0,0,-z_1),\\
p_3=(x_0,y_0, 0,0),\quad  p_4=(x_0, -\frac{1}{2}y_0,0, -\frac{\sqrt{3}}{2}y_0),\quad 
p_5=(x_0, -\frac{1}{2}y_0,0, \frac{\sqrt{3}}{2}y_0),\end{gather*}
where $z_1=+\sqrt{(a^2-1) \big(  1- \frac{x_1^2}{a^2}\big)}$ and 
$y_0=+\sqrt{(a^2-1) \big(x_0^2-1\big)}$.  Then the simplex $\Delta^4$ with vertices 
$\{p_1,p_2,p_3,p_4,p_5\}$ is regular of length $2z_1$ and is focally embedded in $(E,\mathcal{H})$, where
\begin{equation}\label{EEHa}
E\colon z^2=(a^2-1) \bigg(  1- \frac{x^2}{a^2}\bigg) \quad \text{ and } \quad
\mathcal{H}\colon y^2=(a^2-1)\big(x^2-1\big)-w^2.
\end{equation}

%\bigskip

Summarizing,  we may assume throughout the rest of the paper the general framework that $E$ and $\mathcal{H}$ are a pair of focal quadrics with equations as in Equation \eqref{EH}. Thus, $E$  is an ellipse in the $\{x,z\}$-plane with foci at $(1,0,0,0)$ and $(-1,0,0,0)$ and $\mathcal{H}$ is an hyperboloid of revolution in the   $\{x,y,w\}$-space with foci at $f_h^+=(\sqrt{3/2},0,0,0)$ and $f_h^-=(-\sqrt{3/2},0,0,0)$. Furthermore, the simplex $\Delta^4$ with vertices 
$\{p_1,p_2,p_3,p_4,p_5\}$ is regular of length $2z_1$ and is focally embedded in $(E,\mathcal{H})$, 
where
\begin{gather*} p_1= (x_1,0,z_1,0), \quad p_2= (x_1,0,-z_1,0),\\
p_3=(x_0,y_0, 0,0),\quad  p_4=(x_0, -\frac{1}{2}y_0,0, -\frac{\sqrt{3}}{2}y_0),\quad 
p_5=(x_0, -\frac{1}{2}y_0,0, \frac{\sqrt{3}}{2}y_0),\end{gather*}
and where $z_1=+\sqrt{(a^2-1) \big(  1- \frac{x_1^2}{a^2}\big)}$ and 
$y_0=+\sqrt{(a^2-1) \big(x_0^2-1\big)}$.

\end{remark}

\subsection{Achieving constant width in a pair \boldmath{$\Omega_E$} and \boldmath{$\Omega_{\mathcal{H}}$}.}\label{sec:distance}

The purpose of this section is to use the notions of focal quadrics and Steiner chains to  construct a family of $4$-balls centered at the faces of the focal pair $(E,\mathcal{H})$ whose envelope is  a pair of $3$-dimensional submanifolds in $\R^4$. This pair of submanifolds are in a certain sense ``opposite," since when they are both present together, the property of constant width is achieved.

Consider the plane $\Gamma_{345}$ containing $\{p_3,p_4,p_5\}$ in the $\{x,y,w\}$-space intersecting $\mathcal{H}$ in the circle $C$ with center on the $x$-axis. Let $\mathcal{H}_{345}$ be the bounded surface of $\mathcal{H}$ with boundary $C$.  
 Let $\mathbb{S}^+$ be the $2$-dimensional sphere with center at  $f_h=(\sqrt{3/2},0,0)$ containing $C$ and let  $\mathbb{S}^-$ be the $2$-dimensional sphere with center at  
 $f_h^-=(-\sqrt{3/2},0,0)$ containing $C$. As in Section \ref{hiperbolicStainer}, consider the collection $\Omega_ {\mathcal{H}}$ of $3$-balls in the $\{x,y,w\}$-space  contained between $\mathbb{S}^+$ and 
$ \mathbb{S}^-$ and tangent to both spheres.  This collection of $3$-balls will be a hyperbolic Steiner chain $\Omega_{\mathcal{H}}$ centered at $\mathcal{H}$ and based in $C$.  In particular, note that centers of all these balls lie on $\mathcal{H}_{345}$.

Similarly, we now work  in the $\{x,z\}$-plane. As before, denote by $S^+$ the circle with center at  $f_e=(1,0,0,0)$ containing $\{p_1,p_2\}$ and by $S^-$ the circle with center at  $f_e^-=(-1,0,0,0)$ containing $\{p_1,p_2\}$. 
Consider the elliptic Steiner chain $\Omega_E$ of disks as in section \ref{Elliptic stainer} contained in $S^+\setminus S^-$ and tangent to both circles. We know that the centers of all these disks lie on the subarc of $E$ with endpoints at $\{p_1, p_2\}$. Denote by $E_{12}$ this subarc of the ellipse $E$.
Finally, for every $\boldsymbol{x} \in \mathcal{H}_{345}$, denote by $\mathcal{B}_{\boldsymbol{x}}$ the $4$-ball  in  $\mathbb{R}^4$ centered at $\boldsymbol{x}$ such that the intersection of $\mathcal{B}_{\boldsymbol{x}}$ with the $\{x,y,w\}$-plane is an element of $\Omega_{\mathcal{H}}$, and let $\mathcal{R}_{\boldsymbol{x}}$ be the radius of this ball.  Similarly,  for every ${\boldsymbol{y}}\in E_{12}$, denote by $B_{\boldsymbol{y}}$ the $4$-ball  in  $\mathbb{R}^4$  centered at ${\boldsymbol{y}}$ that is an element of $\Omega_E$, and let $R_{\boldsymbol{y}}$ be the radius of this ball.

\medskip
Remember that the length of every edge of the regular $4$-simplex $\Delta^4=\{p_1,p_2,p_3,p_4, p_5\}$ focally  embedded in $(E, \mathcal{H})$ is $2z_1$  (according to the numbers obtained in Equations \eqref{valyx} and by Equation \eqref{prop},  we know that  $2z_1=\frac{\sqrt{7-2\sqrt{10}}}{3}$. Then the following statement is true:

\begin{lemma}\label{constante}
For every ${\boldsymbol{x}}\in \mathcal{H}_{345}$ and for every  ${\boldsymbol{y}}\in E_{12}$, 
\[ {\boldsymbol{x}}{\boldsymbol{y}}+\mathcal{R}_{\boldsymbol{x}}+R_{\boldsymbol{y}} = 2z_1.\]

\end{lemma}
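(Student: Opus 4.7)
The plan is to rewrite both radii as signed focal distances and then apply Theorem~\ref{unoyuno}, reduced to a suitable plane through the axis $l$, to collapse everything to a constant.

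First I would read off what the Steiner chain definitions force. Each $3$-ball of $\Omega_{\mathcal{H}}$ lies inside the sphere $\mathbb{S}^+$ of radius $f_h^+p_3$ centered at $f_h^+$ and is tangent internally to it, so
$$\mathcal{R}_{\boldsymbol{x}}=f_h^+p_3-\boldsymbol{x}f_h^+,\qquad\text{equivalently,}\qquad \boldsymbol{x}f_h^+=f_h^+p_3-\mathcal{R}_{\boldsymbol{x}}.$$
Likewise each disk of $\Omega_E$ lies in $S^+\setminus S^-$ and is tangent internally to $S^+$ (of radius $f_e^+p_1$ centered at $f_e^+$), so
$$R_{\boldsymbol{y}}=f_e^+p_1-\boldsymbol{y}f_e^+,\qquad\text{equivalently,}\qquad \boldsymbol{y}f_e^+=f_e^+p_1-R_{\boldsymbol{y}}.$$

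Next I would invoke Theorem~\ref{unoyuno}. Given $\boldsymbol{x}\in\mathcal{H}_{345}$ off the axis, let $\Gamma_{\boldsymbol{x}}$ be the $2$-plane in the $\{x,y,w\}$-space containing $l$ and $\boldsymbol{x}$; by rotational symmetry of $\mathcal{H}$ about $l$, the intersection $H_{\boldsymbol{x}}:=\mathcal{H}\cap\Gamma_{\boldsymbol{x}}$ is a hyperbola which, together with $E$, forms a pair of focal conics in the $3$-space $\Gamma_{\boldsymbol{x}}+\{x,z\}\text{-plane}$. Since $l\subset\Gamma_{\boldsymbol{x}}$, the vertex $f_e^+=(1,0,0,0)$ of $\mathcal{H}$ belongs to $H_{\boldsymbol{x}}$, and it sits on the same (positive) branch as $\boldsymbol{x}$; analogously $f_h^+=(\sqrt{3/2},0,0,0)$ lies on $E$. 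Applying Theorem~\ref{unoyuno} in this $3$-space with $a_e=\boldsymbol{y}$, $b_e=f_h^+$, $a_h=\boldsymbol{x}$, $b_h=f_e^+$ yields
$$\boldsymbol{y}\boldsymbol{x}+f_h^+f_e^+ \;=\; \boldsymbol{x}f_h^+ + \boldsymbol{y}f_e^+.$$

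Substituting the two identities from the first step gives
$$\boldsymbol{x}\boldsymbol{y}+\mathcal{R}_{\boldsymbol{x}}+R_{\boldsymbol{y}} \;=\; f_h^+p_3+f_e^+p_1-f_h^+f_e^+,$$
a quantity independent of $\boldsymbol{x}$ and $\boldsymbol{y}$. To identify this constant I would specialize to $\boldsymbol{x}=p_3\in C=\partial\mathcal{H}_{345}$ and $\boldsymbol{y}=p_1$: the radius formulas then force $\mathcal{R}_{p_3}=0=R_{p_1}$, so the right-hand side equals $p_1p_3$, which is an edge of the regular simplex $\Delta^4$ focally embedded in $(E,\mathcal{H})$ and hence has length $2z_1$.

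The main obstacle I anticipate is keeping the Steiner-chain tangency conventions straight (internal versus external, to get the correct signs in the two radius formulas) and verifying that after the rotational reduction to $\Gamma_{\boldsymbol{x}}$ the points $\boldsymbol{x}$ and $f_e^+$ do lie on the \emph{same} connected component of $H_{\boldsymbol{x}}$, which is precisely the hypothesis needed to apply Theorem~\ref{unoyuno}; once those sign and component issues are settled, the rest is a single substitution.
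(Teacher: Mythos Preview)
Your proof is correct and follows essentially the same route as the paper's: both express the radii via internal tangency to $\mathbb{S}^+$ and $S^+$, rewrite $\boldsymbol{x}\boldsymbol{y}+\mathcal{R}_{\boldsymbol{x}}+R_{\boldsymbol{y}}$ in terms of focal distances, apply the focal-conic identity, and then specialize to $\boldsymbol{x}=p_3$, $\boldsymbol{y}=p_1$ to identify the constant as $2z_1$. The only cosmetic difference is that the paper invokes Theorem~\ref{Runoyuno} as a black box, whereas you inline its proof by passing to the plane $\Gamma_{\boldsymbol{x}}$ through the axis and applying Theorem~\ref{unoyuno} directly; your care about the same-component hypothesis is exactly what that reduction requires.
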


\begin{proof}
% The proof of Lemma  \ref{constante}  closely follow the proof of Theorem 3.3 of \cite{AMO}.  
By construction, we observe first that: 
\begin{enumerate}[1)] 
\item $\mathcal{R}_{\boldsymbol{x}}+ {\boldsymbol{x}}f_h=\mbox{ radius of } \mathbb{S}^+$, where $f_h=(\sqrt{3/2},0,0,0)$  is the focus of the hyperboloid $\mathcal{H}$, and
\item $R_{\boldsymbol{y}} + {\boldsymbol{y}}f_e= \mbox{radius of } S^+$, where  $f_e=(1,0,0,0)$  is the focus of the ellipse $E$.
\end{enumerate}

Consequently 
\[ {\boldsymbol{x}}{\boldsymbol{y}}+\mathcal{R}_{\boldsymbol{x}}+R_{\boldsymbol{y}}= {\boldsymbol{x}}{\boldsymbol{y}}- {\boldsymbol{x}}f_h -{\boldsymbol{y}}f_e + \mbox{ radius of } \mathbb{S}^+  +  \mbox{ radius of } S^+\] 
which is constant by Theorem \ref{Runoyuno}.  Furthermore if ${\boldsymbol{x}}=p_3$ and ${\boldsymbol{y}}=p_1$, 
then $\mathcal{R}_{\boldsymbol{x}}=0=R_{\boldsymbol{y}}$ and hence 
$ {\boldsymbol{x}}{\boldsymbol{y}}+\mathcal{R}_{\boldsymbol{x}}+R_{\boldsymbol{y}} = p_3p_1=2z_1$.
\end{proof}

%\bigskip

Let us consider the two Steiner chains $\Omega_{\mathcal{H}}$ and $\Omega_E$ defined as before, and let 
\[|\Omega_{\mathcal{H}}|=\bigcup_{x\in \mathcal{H}_{345}} \mathcal{B}_{\boldsymbol{x}} \quad \mbox{ and } \quad |\Omega_E|=\bigcup_{y\in E_{12}}B_{\boldsymbol{y}} .\]

 Recall that for two subsets $S, T$ of $\mathbb{R}^4$, the \emph{diameter} between $S$ and $T$ is defined by $d(S,T)= \sup\{ ab\mid a\in S, b\in T\}.$
Then as a consequence of Lemma \ref{constante}, we are able to show that the diameter between $|\Omega_{\mathcal{H}}|$ and $|\Omega_E|$ is $2z_1$.

\begin{lemma}\label{lemS} 
If $\Omega_{\mathcal{H}}$ and $\Omega_E$ are the two Steiner chains defined above, then  
\[d(\left|\Omega_{\mathcal{H}}\right|, \left|\Omega_E\right|)\leq2z_1.\]
\end{lemma}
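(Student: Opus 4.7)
The plan is to reduce the lemma directly to Lemma \ref{constante} via the triangle inequality. The statement involves a supremum, so I would first fix arbitrary representative points and then take the sup at the end.

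First, I would pick arbitrary $a\in |\Omega_{\mathcal{H}}|$ and $b\in |\Omega_{E}|$. By the definitions of these unions there exist $\boldsymbol{x}\in\mathcal{H}_{345}$ with $a\in\mathcal{B}_{\boldsymbol{x}}$, hence $a\boldsymbol{x}\leq\mathcal{R}_{\boldsymbol{x}}$, and $\boldsymbol{y}\in E_{12}$ with $b\in B_{\boldsymbol{y}}$, hence $b\boldsymbol{y}\leq R_{\boldsymbol{y}}$. Applying the triangle inequality along the path $a\to\boldsymbol{x}\to\boldsymbol{y}\to b$ gives
\[
ab\ \leq\ a\boldsymbol{x}\,+\,\boldsymbol{x}\boldsymbol{y}\,+\,\boldsymbol{y}b\ \leq\ \mathcal{R}_{\boldsymbol{x}}\,+\,\boldsymbol{x}\boldsymbol{y}\,+\,R_{\boldsymbol{y}}.
\]
The right-hand side equals $2z_1$ by Lemma \ref{constante}, since that lemma's identity is valid for every pair $(\boldsymbol{x},\boldsymbol{y})\in \mathcal{H}_{345}\times E_{12}$. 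Because the bound $ab\leq 2z_1$ holds for every choice of $a$ and $b$, taking the supremum yields $d(|\Omega_{\mathcal{H}}|,|\Omega_{E}|)\leq 2z_1$.

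There is really no hard step here: the lemma is a clean consequence of Lemma \ref{constante} once one recognizes that membership in $\mathcal{B}_{\boldsymbol{x}}$ and $B_{\boldsymbol{y}}$ bounds distances to the centers by the respective radii. The only mild point to watch is that the estimate must hold uniformly for \emph{all} admissible centers, so one cannot fix $(\boldsymbol{x},\boldsymbol{y})$ in advance; but Lemma \ref{constante} is precisely the uniform statement we need. I would also remark that this bound is actually tight, witnessed for instance by taking $a=p_3\in\mathcal{B}_{p_3}$ (a degenerate ball of radius $0$) and $b=p_1\in B_{p_1}$, where the triangle inequality collapses to an equality; this hints at the equality $d(|\Omega_{\mathcal{H}}|,|\Omega_{E}|)=2z_1$ that will be relevant in the constant-width assembly of Section \ref{sec:assembly}, but for the present lemma only the inequality is claimed.
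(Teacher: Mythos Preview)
Your proof is correct and follows essentially the same approach as the paper: pick arbitrary points in the two unions, use membership in the balls to bound distances to the centers by the radii, apply the triangle inequality, and invoke Lemma~\ref{constante}. Your write-up is in fact slightly more explicit than the paper's (you spell out the two-step triangle inequality and the passage to the supremum), and your closing remark about tightness correctly anticipates the equality established just after this lemma in the paper.
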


\begin{proof}
 Let $w\in |\Omega_{\mathcal{H}}|$.  Then $w\in \mathcal{B}_{\boldsymbol{x}},$  
for some point ${\boldsymbol{x}}\in \mathcal{H}$. Similarly, let $u\in |\Omega_E|$.  Then $u\in B_{\boldsymbol{y}},$  
for some point ${\boldsymbol{y}}\in E_{12}$. Therefore, by the triangle inequality, 
$wu\leq \x\y+ \mathcal{R}_{\x}+R_{\y}$. Then
by Lemma \ref{constante},  $wu\leq 2z_1$.   
\end{proof} 

%\bigskip

Next, we show that the envelope of $\Omega_{\mathcal{H}}$ and $\Omega_E$ is a pair of $3$-dimensional submanifolds in $\R^4$, in which the property of constant width is achieved.

\smallskip

For every $\x\in {\mathcal{H}}_{345}$  and $\y\in E_{12}$, let  $L(\x,\y)$ 
be the line through $\x$ and $\y$. (See  Figure \ref{isaac}.)
Denote by $\phi_1({\x},{\y})$ the point in $L({\x},{\y})$ at  distance $\mathcal{R}_{\x}$ from ${\x}$, where ${\x}$ is between $\phi_1({\x},{\y})$ and ${\y}$, and let $\phi_2({\x},{\y})$ be the point in $L({\x},{\y})$ at  distance $R_{\y}$ from ${\y}$, where ${\y}$ is between ${\x}$ and $\phi_2({\x},{\y})$.  Note that $\phi_1({\x},{\y})\in |\Omega_{\mathcal{H}}|$, $\phi_2({\x},{\y})\in |\Omega_E|$ and   
$\phi_1({\x},{\y})\phi_2({\x},{\y})= 2z_1$. Therefore, by Lemma \ref{constante}, $d(|\Omega_{\mathcal{H}}|, |\Omega_{E}|)=2z_1$.

\begin{figure}[h]
\begin{center}
\includegraphics[scale=.35]{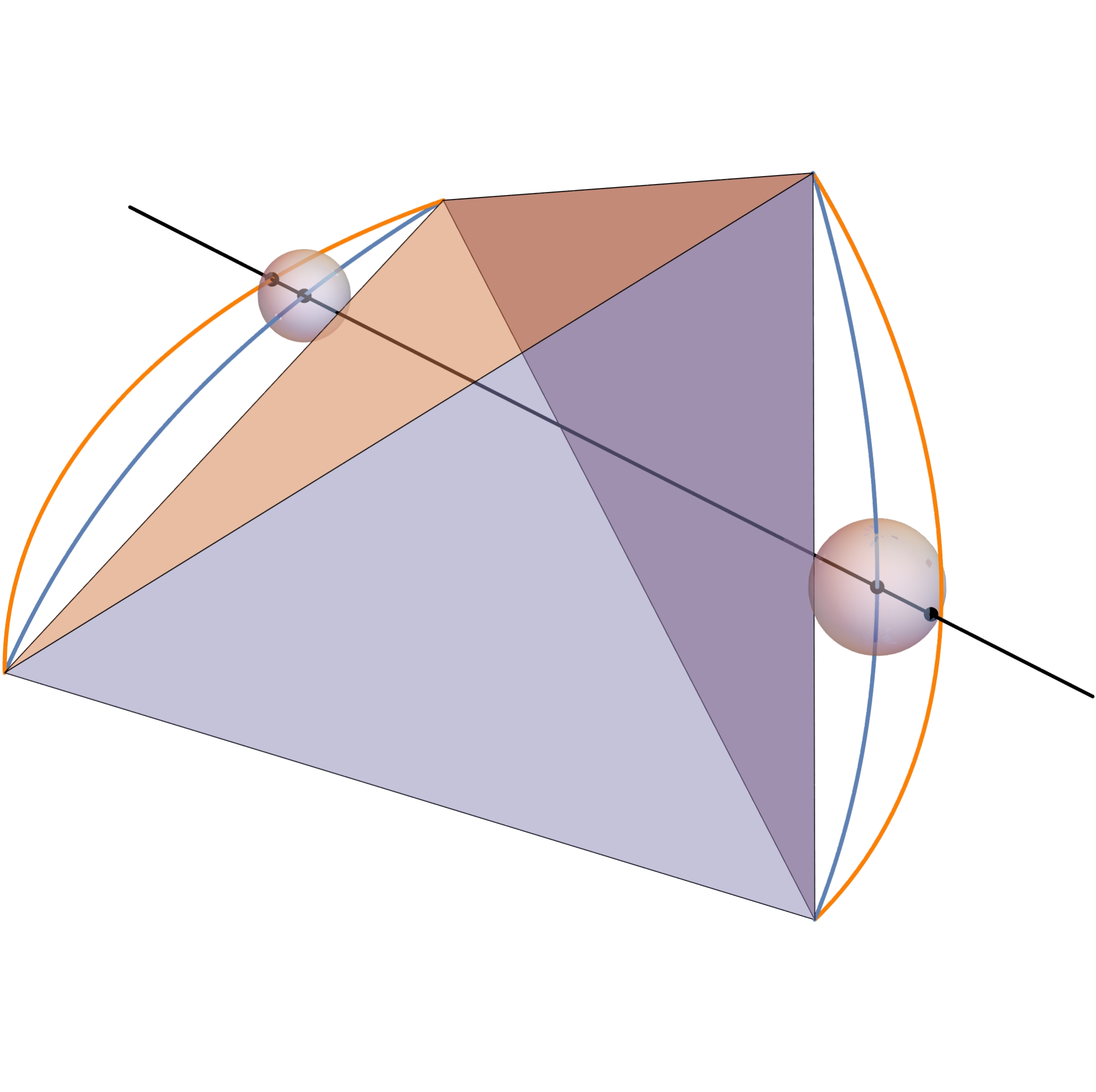}\put(2,63){$L({\x},{\y})$}\put(-25,82){$\phi_2({\x},{\y})$}\put(-35,88){${\y}$}\put(-135,137){${\x}$}\put(-176,132){$\phi_1({\x},{\y})$}\put(-190,63){$p_4p_5$}\put(-52,18){$p_2$}\put(-50,158){$p_1$}\put(-111,154){$p_3$}
\caption{The line $L({\x},{\y})$ }\label{isaac}
\end{center}
\end{figure}

\begin{lemma}
For every ${\x}\in {\mathcal{H}_{345}}$ and ${\y}\in E_{12}$, 
the interval $\phi_1({\x},{\y})\phi_2({\x},{\y})$ is a binormal of the convex hull  $\conv(|\Omega_{\mathcal{H}}|\cup|\Omega_E|)$.  
Furthermore, any two of these binormals are non-parallel.
\end{lemma}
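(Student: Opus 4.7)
My plan is to exhibit explicit supporting hyperplanes at $\phi_1,\phi_2$ perpendicular to the chord, and to treat uniqueness of the chord direction separately.

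Set $\vec u = (\y-\x)/\|\y-\x\|$, so that $\phi_2-\phi_1 = 2z_1\vec u$, and let $\Pi_i$ be the affine hyperplane through $\phi_i$ with normal $\vec u$. By construction $\vec u$ is collinear with the radii $\x\phi_1$ and $\y\phi_2$, so $\Pi_1$ is the tangent hyperplane to $\mathcal{B}_\x$ at $\phi_1$ and $\Pi_2$ the tangent hyperplane to $B_\y$ at $\phi_2$; thus $\phi_1,\phi_2\in\partial K$ with $K:=\conv(|\Omega_\mathcal{H}|\cup|\Omega_E|)$. The key tool is the Pythagorean trick: if $\|p-\phi_2\|\le 2z_1=\|\phi_1-\phi_2\|$, expanding $\|p-\phi_2\|^2 = \|p-\phi_1\|^2 - 4z_1\langle p-\phi_1,\vec u\rangle + 4z_1^2$ gives $\langle p-\phi_1,\vec u\rangle\ge \|p-\phi_1\|^2/(4z_1)\ge 0$, and symmetrically $\|p-\phi_1\|\le 2z_1$ yields $\langle p-\phi_2,\vec u\rangle\le 0$. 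By Lemma~\ref{lemS}, every $p\in|\Omega_\mathcal{H}|$ satisfies $\|p-\phi_2\|\le 2z_1$ and every $p\in|\Omega_E|$ satisfies $\|p-\phi_1\|\le 2z_1$, so $|\Omega_\mathcal{H}|$ lies on the $\phi_2$-side of $\Pi_1$ and $|\Omega_E|$ on the $\phi_1$-side of $\Pi_2$.

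The main obstacle is the two remaining inclusions: that $|\Omega_E|$ also lies on the $\phi_2$-side of $\Pi_1$ and $|\Omega_\mathcal{H}|$ on the $\phi_1$-side of $\Pi_2$. For $|\Omega_E|$ I would prove the sub-lemma that $|\Omega_E|$ is contained in the closed $4$-ball of radius $z_1$ about the midpoint $m=(x_1,0,0,0)$ of $p_1p_2$; using the Steiner-chain identity $R_\y=\|p_1-f_e^+\|-\|\y-f_e^+\|$ and the ellipse parametrization $\y(t)=(\sqrt{3/2}\cos t,0,\sqrt{1/2}\sin t,0)$ on $E_{12}$, the scalar bound $\|\y-m\|+R_\y\le z_1$ reduces after one derivative computation to the condition $x_1\ge 1$, which holds. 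Then $\|p-\phi_2\|\le 2z_1$ for all $p\in|\Omega_E|$ and the Pythagorean trick supplies the needed inequality. The analogous step for $|\Omega_\mathcal{H}|$ is harder, because the diameter of $|\Omega_\mathcal{H}|$ is $2y_0>2z_1$ (realized between antipodal points of the circle $C$), so no enclosing $z_1$-ball exists; there I expect to verify $\langle p-\phi_2,\vec u\rangle\le 0$ directly by decomposing $\x'-\y$ along and perpendicular to $\vec u$ and invoking Lemma~\ref{constante} together with the focal-quadric identity~\eqref{val32} $x_1^2+\tfrac{9}{4}y_0^2=\tfrac{3}{2}$.

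For non-parallelism, write $\y-\x=(X_\y-X_\x,\,-Y_\x,\,Z_\y,\,-W_\x)$ using $\y\in\{Y=W=0\}$ and $\x\in\{Z=0\}$. Given a unit direction $\vec v$ with $v_X>0$ (which always holds since $X_\y\ge x_1>x_0\ge X_\x$) and a chord length $\kappa>0$, the transverse components $(Y_\x,W_\x)=-\kappa(v_Y,v_W)$ and $Z_\y=\kappa v_Z$ are forced, whence the defining equations of $\mathcal{H}$ and $E$ uniquely determine $X_\x,X_\y>0$. The remaining scalar equation $X_\y-X_\x=\kappa v_X$ has strictly decreasing left-hand side in $\kappa$ with positive value at $\kappa=0$, so at most one $\kappa>0$ satisfies it. Hence $\vec v$ determines $(\x,\y)$ uniquely, and two distinct pairs $(\x,\y)\ne(\x',\y')$ produce non-parallel bin
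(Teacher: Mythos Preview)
Your route is quite different from the paper's. The paper argues in essentially one line: since $\Pi_1$ is the unique tangent hyperplane to the ball $\mathcal B_\x$ at $\phi_1(\x,\y)$ (respectively $\Pi_2$ to $B_\y$ at $\phi_2(\x,\y)$), it declares this to be the support hyperplane of $\conv(|\Omega_{\mathcal H}|\cup|\Omega_E|)$ there, and it deduces non-parallelism from the remark that $\phi_1,\phi_2$ are smooth embeddings (``of course, all these binormals are non-parallel''). Your Pythagorean expansion combined with Lemma~\ref{lemS} is a clean and correct mechanism for two of the four half-space inclusions, and your coordinate argument for non-parallelism---recovering $(Y_\x,W_\x,Z_\y)$ and then $X_\x,X_\y$ from a putative direction and length, and noting that $X_\y-X_\x$ is strictly decreasing in $\kappa$ while $\kappa v_X$ is strictly increasing---is sound and far more explicit than anything the paper writes.

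The genuine gap is precisely the one you flag: placing $|\Omega_{\mathcal H}|$ on the $\phi_1$-side of $\Pi_2$. You rightly note that $\mathrm{diam}\,|\Omega_{\mathcal H}|\ge 2y_0>2z_1$, so the enclosing-$z_1$-ball trick used for $|\Omega_E|$ cannot be recycled. But what you offer in its place (``decompose $\x'-\y$ along and perpendicular to $\vec u$ and invoke Lemma~\ref{constante} together with \eqref{val32}'') is a hope, not an argument. Concretely, the required inequality is $\langle \y-\x',\vec u\rangle+R_\y\ge \mathcal R_{\x'}$ for every $\x'\in\mathcal H_{345}$; writing $\y-\x'=(\y-\x)+(\x-\x')$, bounding $\langle \x-\x',\vec u\rangle\ge-\|\x-\x'\|$, and using Lemma~\ref{constante} reduces this to the sufficient condition $2z_1\ge \|\x-\x'\|+\mathcal R_\x+\mathcal R_{\x'}$, which is \emph{false} already when $\x,\x'$ are antipodal on the boundary circle $C$ (there $\mathcal R_\x=\mathcal R_{\x'}=0$ while $\|\x-\x'\|=2y_0>2z_1$). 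So the constant-sum identity together with a crude projection bound is provably insufficient; one must exploit the specific position of $\vec u$ relative to $\mathcal H_{345}$, and you have not indicated how. Until this inclusion is established you have not shown that $\Pi_2$ supports the full convex hull, and hence not that $\phi_1\phi_2$ is a binormal. (For what it is worth, the paper's own proof does not fill this step either---it simply asserts the support property---so your proposal is more careful than the original, but it stalls at the same place.)
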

\begin{proof} If ${\x}\notin C$, then there is a unique support plane of $\conv(|\Omega_{\mathcal{H}}|\cup|\Omega_E|)$ at $\phi_1({\x},{\y})$ orthogonal to $\phi_1({\x},{\y})\phi_2({\x},{\y})$ because this is the unique tangent plane to $\mathcal{B}_{\x}$ at $\phi_1({\x},{\y})$. Similarly, if ${\y}\not=\{p_1,p_2\}$, then 
there is a unique support plane of $\conv(|\Omega_{\mathcal{H}}|\cup|\Omega_E|)$ at $\phi_2({\x},{\y})$ orthogonal to $\phi_1({\x},{\y})\phi_2({\x},{\y})$, because this is the unique tangent plane to $B_{\y}$ at $\phi_2({\x},{\y}).$ %{ \color{red} falta decir que pasa si $x\in C$?} 
Note that if ${\x}\in C$, then $\phi_1({\x},{\y})={\x}$  for every ${\y}\in E_{12}$,  
and similarly, $\phi_2({\x},p_i)=p_i$  for every 
${\x}\in \mathcal{H}_{345}$, and $i=1,2$.  In any other case, then
\begin{gather*}\phi_1\colon (\mathcal{H}_{345} \setminus C)\times E_{12} \to \mathbb{R}^4\text{ is an smooth embedding, and } \\ 
\phi_2\colon \mathcal{H}_{345}\times (E_{12}\setminus \{p_1,p_2\}) \to \mathbb{R}^4\text{ is an smooth embedding, as well.}\end{gather*}
Therefore the interval $\phi_1({\x},{\y})\phi_2({\x},{\y})$ is a binormal of the convex hull  $\conv(|\Omega_{\mathcal{H}}|\cup|\Omega_E|)$ and of course, all these binormals are non-parallel.
\end{proof}   

%\medskip

We may regard 
\[\phi_1 (\mathcal{H}_{345} \times E_{12}) \quad \mbox{ and }\quad  \phi_2( \mathcal{H}_{345}\times E_{12}),\]
 the envelopes of $|\Omega_{\mathcal{H}}|$ and $|\Omega_E|$, 
as two $3$-dimensional submanifolds of $\R^4$ in which the property of constant width is achieved.

\section{A Focal 2-Skeleton of the Regular 4-Simplex }\label{sec:2-skeleton}

As we mentioned before, our $4$-dimensional body of constant width will be constructed from the $4$-simplex Reuleaux body by replacing a small neighborhood of its $2$-skeleton with sections of an envelope of Steiner chains of $4$-balls.  The centers of these balls lie in the $2$-skeleton 
of the $4$-simplex where every edge is a subarc of an ellipse  and every triangle is a piece of surface of a hyperboloid of revolution  in such a way that  every pair of dual faces (edge--triangle) are focal.  

In the same way that we constructed the focal pair formed by the subarc of ellipse $E_{12}$ and the hyperboloid of revolution $\mathcal{H}_{345}$, in this section we will explicitly construct the $2$-skeleton referred above. The idea now is to construct two new focal pairs formed say, by the subarc of ellipse $E_{45}$ and the hyperboloid of revolution $\mathcal{H}_{123}$, where $E_{45}$ lies in the hyperboloid of revolution  $\mathcal{H}_{345}$ and at the same time will be part of boundary of $\mathcal{H}_{345}$.
 
We begin by noting that both $p_{123}$, the barycenter of the triangle  $p_1p_2p_3$ and $p_{45}$, the midpoint of the segment  $p_4p_5$, lie in the $\{x,y\}$-plane because $p_{45}=\big(x_0, -\frac{y_0}{2},0,0\big)$ and 
$p_{123}=\big(\frac{2x_1+x_0}{3}, \frac{y_0}{3},0,0\big)$.  
Since by (\ref{x1-x0}),  $p_{123}-p_{45}=\big(\frac{2(x_0-x_1)}{3}, \frac{5y_0}{6},0,0\big)=
\big(\frac{\sqrt{5}}{3}y_0, \frac{5y_0}{6},0,0\big)$, the line $L$ through $p_{123}$ and $p_{45}$ has unit vector $u_L$ given by 
\begin{equation}\label{pendiente}
u_L=\frac{p_{123}-p_{45}}{\|p_{123}-p_{45}\|}=\bigg(\frac{2}{3}, \frac{\sqrt{5}}{3},0,0\bigg).
\end{equation}

Then the following lemma is true: 
      
\begin{lemma}\label{cachito}
Let $\omega\in \mathcal{H} \cap L$ be the point that lies in $L$ in such a way that $p_{45}$ is between $\omega$ and  the barycenter $p_{123}$.  Then the distance between $\omega$ and $p_{45}$ is equal to $\sqrt{\frac{3}{2}}-x_1$. %$a-x_1$.
 \end{lemma}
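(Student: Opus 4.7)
The plan is to intersect the line $L$ with the hyperboloid $\mathcal{H}$ by a direct parametric calculation, then reduce the resulting quadratic in the distance using the two algebraic identities already established for $x_0, y_0, x_1$. Because $L$ lies entirely in the $\{x,y\}$-plane (both $p_{45}$ and $p_{123}$ have $z=w=0$), the intersection with $\mathcal{H}\colon y^2 = \tfrac{1}{2}(x^2-1) - w^2$ reduces to the intersection of $L$ with the planar hyperbola $2y^2 = x^2-1$.

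First I would parametrize $\omega = p_{45} - t\, u_L = \bigl(x_0 - \tfrac{2t}{3},\, -\tfrac{y_0}{2} - \tfrac{\sqrt{5}\,t}{3},\, 0,\, 0\bigr)$ with $t>0$, so that the sought distance $\omega p_{45}$ is precisely $t$ and $\omega$ sits on the side of $p_{45}$ opposite $p_{123}$. Substituting into $2y^2 - x^2 + 1 = 0$ and using $y_0^2 = (x_0^2-1)/2$ to simplify the constant term yields the quadratic
\begin{equation*}
4 t^2 + 4(y_0\sqrt{5} + 2 x_0)\, t - 9 y_0^2 = 0.
\end{equation*}

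The linear coefficient collapses via \eqref{x1-x0}, which gives $y_0\sqrt{5} = 2(x_1 - x_0)$ and hence $y_0\sqrt{5}+2x_0 = 2x_1$. The discriminant then becomes $64x_1^2 + 144 y_0^2 = 16\,(4x_1^2+9y_0^2)$, and here I would invoke \eqref{val32}, namely $x_1^2 + \tfrac{9}{4}y_0^2 = \tfrac{3}{2}$, to conclude $4x_1^2+9y_0^2 = 6$. The constant term of the quadratic is negative so its two roots have opposite signs; taking the positive one gives
\begin{equation*}
t \;=\; \frac{-2x_1 + \sqrt{6}}{2} \;=\; \sqrt{\tfrac{3}{2}} - x_1,
\end{equation*}
which is exactly the claimed length.

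The computation itself is elementary; the only subtle point is recognizing that the two previously derived identities \eqref{x1-x0} and \eqref{val32} are precisely what is needed to collapse an expression that would otherwise involve the awkward nested radicals from \eqref{valx2}--\eqref{valyx} into the clean form $\sqrt{3/2}-x_1$. This is also what makes the statement of the lemma natural: the answer is the gap along the $x$-axis between the abscissa $x_1$ of $p_1,p_2$ and the focus abscissa $\sqrt{3/2}$ of $\mathcal{H}$, rather than anything expressible directly in terms of the defining coordinates.
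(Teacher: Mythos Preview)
Your proof is correct and follows essentially the same route as the paper: reduce to the $\{x,y\}$-plane, parametrize $L$ from $p_{45}$ by $-t\,u_L$, substitute into the planar hyperbola $2y^2=x^2-1$, and then collapse the resulting quadratic using exactly the two identities \eqref{x1-x0} and \eqref{val32}. The only cosmetic difference is that the paper works with the normalized quadratic $t^2+2x_1t-\tfrac{9}{4}y_0^2=0$ rather than your scaled version $4t^2+8x_1t-9y_0^2=0$.
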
 
 
 \begin{proof}  Since both the midpoint $p_{45}$ of the segment $p_4p_5$ and the barycenter $p_{123}$ of the triangle $p_1p_2p_3$ lie in the  $\{x,y\}$ plane, during this proof we shall work in the $\{x,y\}$-plane. Therefore, the hyperboloid of revolution is actually $H$ with equations as in Equations \eqref{EHq}. 
 By  Equation \eqref{pendiente}, the unit vector parallel to $L$ is given by $\big(\frac{2}{3}, \frac{\sqrt{5}}{3}\big)$, so \[L=\bigg\{\bigg(x_0,-\frac{y_0}{2}\bigg)-t\bigg(\frac{2}{3}, \frac{\sqrt{5}}{3}\bigg)\bigg | \ t\in \R\bigg\}.\]
 Therefore, the line $L$ consists of the points 
$\big(x_0-\frac{2}{3}t, -(\frac{y_0}{2}+ \frac{\sqrt{5}}{3})t\big),$
with $t\in \R$.  We wish to prove that if $t=\sqrt{3/2}-x_1$, then  
\[\big(x_0-\frac{2}{3}t,  -(\frac{y_0}{2}+ \frac{\sqrt{5}}{3}t) \big)\in H.\]

Since the equation of the hyperbola $H$ is $2y^2=x^2-1$, in order to find the intersection of $H$ with $L$ we need to solve the following equation:
\[\big(x_0-\frac{2}{3}t\big)^2-1= 2\big(\frac{y_0}{2}+\frac{\sqrt{5}}{3}t\big)^2.\]

Hence 
\[x_0^2-\frac{4x_0}{3}t+\frac{4}{9}t^2-1=\frac{y_0^2}{2}+\frac{2y_0\sqrt{5}}{3}t+\frac{10}{9}t^2.\]
Consequently
\[0=\frac{6}{9}t^2+ \frac{2(y_0\sqrt{5} +2x_0 )}{3}t+\frac{y_0^2}{2}-2y_0^2.\]

Recall that by Equation \eqref{x1-x0}, $x_1-x_0=\frac{\sqrt{5}}{2}y_0$, then $2x_1=y_0\sqrt{5}+2x_0$, 
hence we obtain the equation
\[t^2+2x_1 t-\frac{9}{4}y_0^2=0,\]
whose positive solution is:
\[-x_1+\sqrt{x_1^2+\frac{9}{4}y_0^2}.\]

Finally, recall from Equation \eqref{val32}) that $x_1^2+\frac{9}{4}y_0=3/2$ because
\[x_1^2= \frac{    11+2\sqrt{10}             }{12}\quad \mbox{ and } 
\quad  y_0^2= \frac{    7-2\sqrt{10}             }{27}.\]

Consequently,  if $t=\sqrt{3/2}-x_1$, then 
\[\big(x_0,  -\frac{y_0}{2})-t( \frac{2}{3}, \frac{\sqrt{5}}{3})\in H\subset \mathcal{H}\]
as we wished. 
\end{proof}

\smallskip 
Denote by $\Gamma_{45}$ the plane generated by $L$ and the line through $p_4p_5$.  Consider the quadric 
$E'=\Gamma_{45}\cap \mathcal{H}$. Let  $v_{E'}$ be the tangent vector of the conic $E'$ at $p_4$ and let $v_E$ be the tangent vector of the ellipse $E$ at the point $p_1$ (see Figure \ref{tangentrotation}).  If $u$ and $v$ are two vectors in $\R^4$, let us denote by $\measuredangle\big(u,v\big)$ the angle between them. Then the following  is true: 
  
\begin{lemma} \label{angulos}
$\tan\big(\measuredangle\big(v_E, (0,0,1,0)\big)\big)=\tan\big(\measuredangle\big(v_{E'}, (0,0,0,1)\big)\big)$.
\end{lemma}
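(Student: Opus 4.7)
The plan is to write down both tangent vectors explicitly in $\R^4$, compute the two angles, and show that the desired equality reduces to Equation (\ref{prop}).

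First I would compute $v_E$. Since $E$ lies in the $\{x,z\}$-plane with equation $3z^2 + x^2 = 3/2$ (rearranged from Equation (\ref{EH})), implicit differentiation at $p_1 = (x_1, 0, z_1, 0)$ gives a tangent direction proportional to $(3z_1, 0, -x_1, 0)$. Decomposing this vector along $(0,0,1,0)$ and its orthogonal complement in the $\{x,z\}$-plane yields immediately
$$\tan\bigl(\measuredangle(v_E,(0,0,1,0))\bigr) = \frac{3z_1}{x_1}.$$

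The more involved step is $v_{E'}$. The plane $\Gamma_{45}$ is spanned by $u_L = (2/3, \sqrt{5}/3, 0, 0)$ and $(0,0,0,1)$, so any tangent vector to $E'=\Gamma_{45}\cap \mathcal{H}$ at $p_4$ has the form $\alpha u_L + \beta(0,0,0,1)$. Writing $\mathcal{H}$ as the zero set of $F(x,y,z,w) = x^2 - 2y^2 - 2w^2 - 1$, the gradient at $p_4 = (x_0, -y_0/2, 0, -\sqrt{3}y_0/2)$ is $(2x_0,\, 2y_0,\, 0,\, 2\sqrt{3}y_0)$. The tangency condition $\nabla F(p_4)\cdot(\alpha u_L+\beta(0,0,0,1))=0$ produces a linear relation between $\alpha$ and $\beta$ in which the coefficient of $\alpha$ involves the combination $2x_0+\sqrt{5}y_0$. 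The crucial simplification comes from Equation (\ref{x1-x0}), which gives $2x_0+\sqrt{5}y_0 = 2x_1$; this collapses the relation to $\beta/\alpha = -2x_1/(3\sqrt{3}y_0)$, yielding (up to scale) $v_{E'} = (2\sqrt{3}y_0,\, \sqrt{15}y_0,\, 0,\, -2x_1)$. Its component along $(0,0,0,1)$ has magnitude $2x_1$, and the orthogonal part has magnitude $\sqrt{12y_0^2+15y_0^2}=3\sqrt{3}y_0$, so
$$\tan\bigl(\measuredangle(v_{E'},(0,0,0,1))\bigr) = \frac{3\sqrt{3}y_0}{2x_1}.$$

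The identity to be verified is therefore $3z_1/x_1 = 3\sqrt{3}y_0/(2x_1)$, which after cancellation reduces to $z_1 = (\sqrt{3}/2)\,y_0$; this is exactly Equation (\ref{prop}), which was built into the focal embedding from the outset. I expect the main obstacle to be the linear-algebra bookkeeping in the second step: without recognizing that the awkward combination $2x_0+\sqrt{5}y_0$ emerging from the tangency condition is precisely $2x_1$ via (\ref{x1-x0}), the tangent $v_{E'}$ would stay as an untidy linear form and the symmetry of the two angles would remain hidden.
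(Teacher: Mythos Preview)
Your argument is correct and follows essentially the same route as the paper: compute $v_E$ by differentiating the ellipse, compute $v_{E'}$ as a vector in $\Gamma_{45}$ tangent to $\mathcal{H}$ at $p_4$, simplify the coefficient $2x_0+\sqrt{5}y_0$ to $2x_1$ via Equation~(\ref{x1-x0}), and finish with Equation~(\ref{prop}). The only cosmetic difference is that the paper obtains $v_{E'}$ by rotating the normal of $\mathcal{H}$ from $p_3$ to $p_4$ and then taking a cross product with the normal of $\Gamma_{45}$ inside the $\{x,y,w\}$-space, whereas you impose the gradient condition directly in $\R^4$; both procedures yield the same tangent vector and the same two key simplifications.
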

\begin{proof} 
We observe first that the  tangent vector of the ellipse $E$ at the point $p_1$ can be obtained by taking the derivative of ellipse equation \eqref{EH}, which is 
\[v_E=\bigg(-1,0,\frac{x_1}{3z_1},0\bigg),\] 
and the tangent of the angle between $v_E$ and $(0,0,1,0)$ is 
\begin{equation}
\tan\big(\measuredangle\big(v_E, (0,0,1,0)\big)=-\frac{3z_1}{x_1}.
\end{equation}

Similarly, to calculate the tangent vector $v_{E'}$ of the quadric $E'=\Gamma\cap\mathcal{H}$ at $p_4$,
consider a normal vector $\eta_{\mathcal{H}}$ of $\mathcal{H}$ at $p_3$ in the $\{x,y,w\}$-space, by taking the derivative of the hyperbola equation \eqref{EH} to obtain that 
\[\eta_{\mathcal{H}}=(-x_0, 2y_0,0),\]
which is normal to the surface of revolution $\mathcal{H}$ at $p_3$ in the $\{x,y,w\}$-space.  

Next, let us rotate $\eta_{\mathcal{H}}$ by an angle of $120^{\circ}$ around the $x$-axis in the $\{x,y,w\}$-space with the matrix   
\[
\begin{pmatrix}
1 & 0 & 0 \\
0, &-1/2 & \sqrt{3}/2 \\
0,& - \sqrt{3}/2 &-1/2
\end{pmatrix}\]
%\end{equation}
and obtain  
\[\eta'_{\mathcal{H}}=(-x_0, -y_0, -\sqrt{3}y_0),\]
which is normal to the surface of revolution $\mathcal{H}$ at $p_4$ in the $\{x,y,w\}$-space.  

On the other hand, by Equation \eqref{pendiente}, the normal vector to the plane $\Gamma$ at the $\{x,y,w\}$-space is  
\[\eta_\Gamma=(-\sqrt{5},2,0),\]
because, by \eqref{pendiente},  $(2,\sqrt{5})$ is a vector parallel to $L$ in the $\{x,y\}$-space. 

Therefore, the vector $\eta'_{\mathcal{H}}\times \eta_{\Gamma}$ is orthogonal to  $\eta'_{\mathcal{H}}$ and  to $\eta_\Gamma$, hence it 
lies at the tangent plane of the hyperboloid of revolution $\mathcal{H}$ at $p_4$ and also lies  in $\Gamma$; that is,
$\eta'_{\mathcal{H}}\times \eta_{\Gamma}$
is a tangent vector to the quadric $E'=\Gamma\cap \mathcal{H}$ at $p_4$.

A simple calculation 
\begin{equation}
\eta'_{\mathcal{H}}\times \eta_\Gamma=
\begin{pmatrix}
x & y & w \\
-x_0, &-y_0& -\sqrt{3}y_0 \\
-\sqrt{5},& 2 & 0
\end{pmatrix}
\end{equation}
yields
$$\eta'_{\mathcal{H}}\times \eta_\Gamma= \big(2\sqrt{3}y_0, 
\sqrt{5}\sqrt{3}y_0, -(2x_0+\sqrt{5}y_0\big).$$

By Equation \eqref{pendiente}, the unit vector $(2/3,\sqrt{5}/3,0)$ in the $\{x,y,w\}$-space is parallel to $u_L$ and by (\ref{x1-x0}), $2x_0+\sqrt{5}=2x_1$. Then we have that 
 \[\eta'_{\mathcal{H}}\times \eta_\Gamma= \big(3\sqrt{3}y_0\big) (2/3,\sqrt{5}/3,0)  - 2x_1(0,0,1),\]
which implies that the tangent of the angle between $\eta'_{\mathcal{H}}\times \eta_\Gamma$ and the vector $(0,0,1)$ 
is \[\frac{3\sqrt{3}y_0}{-2x_1}.\] Therefore, by Equation \eqref{prop}, we have $-\frac{3\sqrt{3}y_0}{2x_1}=-\frac{3z_1}{x_1}$ and since $\eta'_{\mathcal{H}}\times \eta_\Gamma$ is parallel to $v_{E'}$, we can conclude
\[\tan\big(\measuredangle\big(v_{E'}, (0,0,0,1)\big)\big)=-\frac{3z_1}{x_1}=\tan\big(\measuredangle\big(v_{E}, (0,0,1,0)\big)\big).\]
\end{proof}

\begin{figure}[H]
\begin{center}
\includegraphics[scale=.85,angle=90]{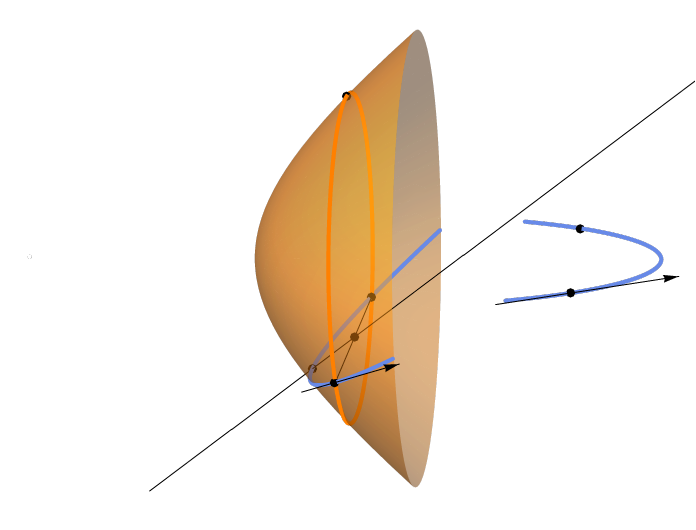}\put(-22,122){\textcolor{blue}{$E$}}\put(-102,122){\textcolor{blue}{$E'$}}\put(-104,190){\textcolor{orange}{$\mathcal H$}}\put(-24,178){$L$}\put(-125,55){$v_{E'}$}\put(-10,92){$v_{E}$}\put(-158,48){$p_4$}\put(-140,102){$p_5$}\put(-60,87){$p_1$}\put(-56,131){$p_2$}\put(-150,186){$p_3$}\put(-163,70){$\omega$}
\caption{The rotation of the ellipse $E$ producing the conic $E'$ over the hyperboloid of revolution $\mathcal{H}$}\label{tangentrotation}
\end{center}
\end{figure}

\begin{lemma}\label{milagro}
Let $\Phi\colon \R^4\to \R^4$ be the  linear isometry that fixes the point $p_3$  and is such that  
$\Phi(p_1)=p_4$  and   $\Phi(p_2)=p_5$. Then 
\[\Phi(E)=E_{45}=\mathcal{H}\cap \Gamma_{45}.\]
\end{lemma}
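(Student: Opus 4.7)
The plan is to identify $\Phi(E)$ with $E_{45}$ by showing the two ellipses live in the same $2$-plane, share the same axis of symmetry, and agree on enough data (two common points with matching tangent directions plus one further common point on the axis) to force equality by a standard intersection-count argument for plane conics.

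\textbf{Step 1 (the ambient plane and the axis).} First I would identify that $\Phi(\{x,z\}\text{-plane}) = \Gamma_{45}$ and $\Phi(x\text{-axis}) = L$. The isometry $\Phi$ acts on the simplex vertices as the permutation $(1\;4)(2\;5)$ fixing $p_3$, so it sends midpoints to midpoints and barycenters to barycenters: $\Phi(p_{12}) = p_{45}$ and $\Phi(p_{345}) = p_{123}$. The $\{x,z\}$-plane is the affine span of $\{p_1,p_2,p_{345}\}$ (all with coordinates $y=w=0$), while $\Gamma_{45}$ is the affine span of $\{p_4,p_5,p_{123}\}$ (all with coordinate $z=0$), giving the first equality; and the $x$-axis is the line through $p_{12}$ and $p_{345}$, which maps under $\Phi$ to the line $L$ through $p_{45}$ and $p_{123}$. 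In particular $\Phi(E)$ is a conic in $\Gamma_{45}$, hence in the same plane as $E_{45}$.

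\textbf{Step 2 (shared symmetry and common data).} Both conics are symmetric about $L$: for $\Phi(E)$ because $E$ is symmetric about the $x$-axis and $\Phi$ is an isometry; for $E_{45}$ because the reflection $\sigma\colon (x,y,z,w)\mapsto(x,y,z,-w)$ preserves $\mathcal{H}$ (whose equation is even in $w$) and preserves $\Gamma_{45}$ (swapping $p_4\leftrightarrow p_5$ and fixing $p_{123}$), with fixed set in $\Gamma_{45}$ equal to $L$. They share the pair $\{p_4,p_5\}$ by construction. Next, they share a point on $L$: the positive long-axis vertex $v=(\sqrt{3/2},0,0,0)$ of $E$ is at distance $\sqrt{3/2}-x_1$ from $p_{12}$ in the positive $x$-direction, and since $\Phi(p_{345})=p_{123}$ with $p_{345}$ on the negative $x$-side of $p_{12}$ (because $x_0<x_1$ by Equation \eqref{x1-x0}), the linear part of $\Phi$ sends the positive $x$-direction to $-u_L$, so $\Phi(v)$ lands on $L$ at distance $\sqrt{3/2}-x_1$ from $p_{45}$ on the side opposite $p_{123}$---precisely the point $\omega$ of Lemma \ref{cachito}. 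Finally, using $\Phi(p_2-p_1)=p_5-p_4$ together with the identity $\sqrt{3}y_0=2z_1$ from Equation \eqref{prop}, the linear part of $\Phi$ sends $(0,0,1,0)$ to $(0,0,0,-1)$; combining this with Lemma \ref{angulos} shows that the tangent lines of $\Phi(E)$ and of $E_{45}$ at $p_4$ coincide (and by reflective symmetry, so do the tangent lines at $p_5$).

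\textbf{Step 3 (uniqueness via Bézout).} The two conics $\Phi(E)$ and $E_{45}$ meet at $p_4$ and $p_5$ with matching tangents (intersection multiplicity $\geq 2$ at each), and they also meet at $\omega$, giving total intersection multiplicity $\geq 5$. Since two distinct irreducible plane conics intersect in at most $4$ points counted with multiplicity, $\Phi(E)=E_{45}$. The main obstacle is the orientation bookkeeping in Step 2---checking that the positive $x$-direction at $p_{12}$ maps to $-u_L$ (toward $\omega$) rather than $+u_L$ (toward $p_{123}$), so that $\Phi(v)$ hits the correct intersection of $L$ with $\mathcal{H}$; this ultimately rests on the sign inequality $x_0<x_1$ established during the construction of the focal embedding.
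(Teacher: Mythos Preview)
Your proof is correct and follows essentially the same route as the paper's: both arguments place the two conics in $\Gamma_{45}$, use Lemma~\ref{cachito} to get the shared point $\omega$ on $L$, use Lemma~\ref{angulos} to match the tangent directions at $p_4$ (and by symmetry at $p_5$), and then conclude via the five-point/B\'ezout principle for plane conics. Your explicit orientation bookkeeping showing $\Phi(v)=\omega$ (via $x_0<x_1$ and $A(1,0,0,0)=-u_L$) and your symmetry argument for $E_{45}$ about $L$ fill in details the paper leaves implicit, but the structure is the same.
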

\begin{proof} Both $\Phi(E)$ and $E_{45}$ are conics in $\Gamma_{45}$  containing the points $p_4$ and $p_5$. Furthermore, by Lemma \ref{cachito}, 
$\Phi(\sqrt{3/2})=\omega$; hence $\omega \in \Phi(E) \cap E_{45}$. Also, by  Lemma \ref{angulos}, the tangents of $\Phi(E)$ and $E_{45}$ at $p_4$ coincide.
Similarly, the tangents of $\Phi(E)$ and $E_{45}$ at $p_5$ coincide. Since two plane quadrics which share $5$ points coincide, we have that $\Phi(E)=E_{45}=\mathcal{H}\cap \Gamma_{45}.$ 
\end{proof}

%\bigskip

Denote by $0$ the barycenter of the  regular $4$-simplex $\Delta^4= \{p_1,p_2, p_3,p_4,p_5\}$.   For every edge $p_ip_j$ of $\Delta^4$, let $p_{ij}$ be the midpoint of $p_ip_j$, $L_{ij}$, the line through $0$ and $p_{ij}$ and $\Gamma^2_{ij}$, the plane through $0$  and $p_ip_j$.   For every triangle $p_ip_jp_k$ of $\Delta^4$, let $p_{ijk}$ be the barycenter of the triangle $p_ip_jp_k$, $L_{ijk}$, the line through $0$ and $p_{ijk}$ and $\Gamma^3_{ijk}$, the $3$-space through the $0$ and the triangle $p_ip_jp_k$.

\smallskip

For every edge $p_ip_j$ of $\Delta^4$ let $E_{ij}\subset \Gamma^2_{ij}$ be a subarc of an ellipse joining the points $p_i$ and $p_j$, where this ellipse has $L_{ij}$ as axis. For every triangle $p_kp_lp_m$ of $\Delta^4$, let $T_{lkm}\subset \Gamma^3_{lkm}$ be a piece of hyperboloid of revolution whose axis is $L_{lkm}$, satisfying the following four conditions: 
\begin{enumerate}[1)]
\item The boundary of  $T_{lkm}$ is  $E_{lk}\cup E_{lm}\cup  E_{ml}$;
\item the hyperboloid of revolution $T_{lkm}$ and the ellipse $E_{ij}$ are focal quadrics; and
\item the $4$-simplex $\{p_1,p_2, p_3,p_4,p_5\}$ is focally embedded in the pair $\big(T_{ijk}, E_{\lambda\alpha}\big)$, 
where $\{i,j,k,\lambda,\alpha\}=\{1,2,3,4,5\}$.
\item For $\{i,j,k,\lambda,\alpha\}=\{1,2,3,4,5\}$, $\biggl(\bigcup_{i,j,k,\lambda,\alpha}\bigl(E_{ij}\bigcup T_{ijk} \bigr)\biggr)$ has the same group of symmetries as the $4$-simplex $\Delta^4$.
\end{enumerate} 

%\medskip
If for every triple $i,j,k$ there are subarcs of ellipses $E_{\lambda\alpha}$ and pieces of hyperboloids of revolution $T_{ijk}$  satisfying 1), 2), 3) and 4), where $\{i,j,k,\lambda,\alpha\}=\{1,2,3,4,5\}$, then we say that \[\biggl(\bigcup_{i,j,k,\lambda,\alpha}\bigl(E_{ij}\bigcup T_{ijk} \bigr)\biggr)\] %
%$$\bigcup_{(i,j)}e_{ij}\bigcup_{(i,j,k)}T_{ijk} $$
is a \emph{focal $2$-skeleton of the regular $4$-simplex} $\Delta^4=\{p_1,p_2, p_3,p_4,p_5\}.$

\smallskip

We observe next that Lemma \ref{milagro} allows us to generalize the above construction for every pair $(E_{\lambda \alpha},\mathcal{H}_{ijk})$ and then construct the focal $2$-skeleton of a regular $4$-simplex crucial for the construction of a $4$-dimensional convex body of constant width. That is, we shall define, for every triple $\{i,j,k\}\subset \{1,2,3,4,5\}$, 
where $\{i,j,k,\lambda,\alpha\}=\{1,2,3,4,5\}$, subarcs of ellipses $E_{\lambda\alpha}$ and pieces of hyperboloids of revolution $\mathcal{H}_{ijk}$  satisfying 1), 2), 3) and 4).
\smallskip

Let $\mathcal{H}_{345}$ be the connected piece of the hyperboloid of revolution $\mathcal{H}$ bounded by $\Gamma_{34}\cap\mathcal{H}$, 
$\Gamma_{45}\cap\mathcal{H}$ and $\Gamma_{53}\cap\mathcal{H}$ and note that $\mathcal{H}_{345}$ is invariant under a linear isometry of 
$\Delta^4=\{p_1,p_2, p_3,p_4,p_5\}$ that send $\{p_1,p_2, p_3\}$ onto itself.

For every triple $\{i,j,k\}\subset \{1,\dots,5\}$, let $\Psi$ be any linear isometry of $\Delta^4$ that sends 
$\{p_3,p_4,p_5\}$ to $\{p_i,p_j,p_k\}$ and define $\mathcal{H}_{ijk} =\Psi(\mathcal{H}_{345})$. Note that since $\mathcal{H}_{345}$ is invariant under  linear isometries of 
$\Delta^4$ that send $\{p_1,p_2, p_3\}$ onto itself, then our definition of $\mathcal{H}_{ijk}$ does not depend on our choice of $\Psi$.

Likewise, if $E_{12}$ is the arc of ellipse $E$ with endpoints $\{p_1,p_2\}$ and for every $\{i,j\}\subset \{1,\dots,5\}$, we define 
the subarc of ellipse $E_{ij}$ as $\Psi(E_{12})$, for any  linear isometry of $\Delta^4$ that sends 
$\{p_1,p_2\}$ to $\{p_i,p_j\}$.  Note that our definition of $E_{ij}$ does not depend on our choice of $\Psi$. 

By construction, $\big(E_{\lambda\alpha},\mathcal{H}_{ijk}\big)$ is a pair of focal quadrics, and by Lemma \ref{milagro}, the boundary of
$\mathcal{H}_{ijk}$ is $E_{ij}\cup E_{jk}\cup E_{ki}$.

Define the focal $2$-skeleton of $\Delta^4$ as:
%$$\bigcup_{(\lambda,\alpha)}E_{\lambda\alpha}\bigcup_{(i,j,k)}T_{ijk} $$
\[ \biggl(\bigcup_{i,j,k,\lambda,\alpha}\bigl(E_{ij}\bigcup \mathcal{H}_{ijk} \bigr)\biggr)\]
and note that it has the symmetries of the regular $4$-simplex $\Delta^4$.

\begin{remark}
 a) The construction of the focal $2$-skeleton in $\Delta^4$, only works for the parameter $a^2=3/2$ (see Remark \ref{nota}).\\
\noindent b) As in the case of dimension $3$ (see \cite{AMO}), it is well known that instead of a focal pair consisting of an ellipse and an hyperbola, it is possible to consider a focal pair consisting of two parabolas. However, unfortunately after extensive and tedious calculations it can be observed that the construction of a focal $2$-skeleton is not possible in this case. 
%Furthermore, it is possible to use, instead a focal pair formed by ellipse and hyperbola, a focal pairs of two parabolas. Unfortunately, in this last case, it is impossible to achieve the same results.   
\end{remark}

\section{The Wedge-Submanifolds and the Assembly}\label{sec:assembly}

As we show in Section \ref{sec:2-skeleton}, the envelope of the $4$-balls of every pair of dual faces of the focal $2$-skeleton gives rise to two ``opposite" $3$-dimensional submanifolds in which we achieve the property of constant width. The purpose of this section is to properly assemble all these pieces to finally obtain the boundary of a $4$-dimensional convex body of constant width.
We recommend consulting the paper \cite{AMO},  because many of the ideas and techniques developed below have their origin and  are explained in greater detail there.
\smallskip

For simplicity, as we have done several times before, we are assuming that $(E, \mathcal{H})$ is the  pair of the focal quadrics defined in \eqref{EH} and let  $\phi_1({\x},{\y})$, $\phi_2({\x},{\y})$, $L({\x},{\y})$, $R_{\x}$  and $R_{\y}$ be as in Section \ref{sec:distance}. 
%Furthermore, denote by $T=T_{345}$ the hyperbolic triangle of the $2$-skeleton and $E=E_{12}$ its dual elliptic edge. 
 %Note that $T\subset \mathcal{H}$ and $e=E'$, where $(E, \mathcal{H})$ are the confocal quadrics defined in (\ref{EH}). See Figure 
% 5\ref{isaac} and Section 4.
%\smallskip 
Let us consider the restriction  $\phi_1\colon \mathcal{H}_{345}\times E_{12} \to \mathbb{R}^4$ and 
$\phi_2\colon \mathcal{H}_{345}\times E_{12} \to \mathbb{R}^4$. Note that $\phi_1(p_i,{\y})=p_i$  for every ${\y}\in E_{12}$, $i=3,4,5$,  
and similarly, $\phi_2({\x},p_i)=p_i$  for every 
${\x}\in \mathcal{H}_{345}, i=1,2$.  In any other case,  
\[\phi_1\colon \mathcal{H}_{345}\times E_{12} \to \mathbb{R}^4\text{\ and\ }
\phi_2\colon \mathcal{H}_{345}\times E_{12} \to \mathbb{R}^4\text{\ are smooth embeddings.}\]

\begin{figure}[h]

\begin{tikzpicture}

\node at (0,0){\includegraphics[scale=.35]{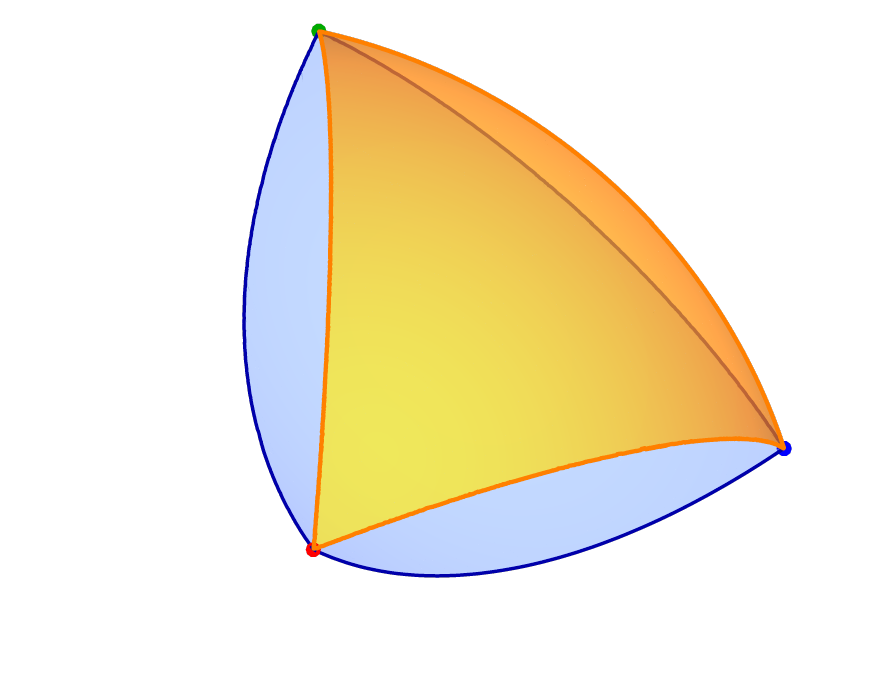}};
\node at (6,-5){\includegraphics[scale=.35]{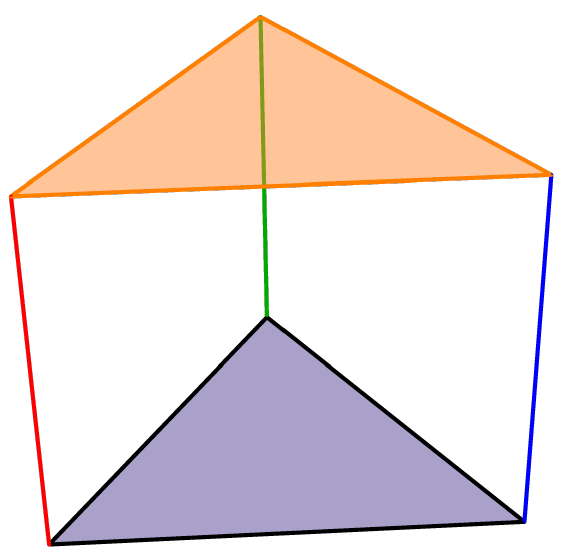}};
\node at (12,.25){\includegraphics[scale=.5]{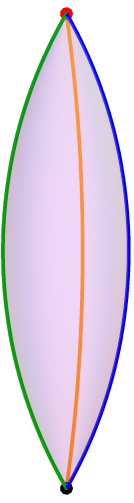}};
\node[rotate=-90] at (6.5,0){\includegraphics[scale=.45]{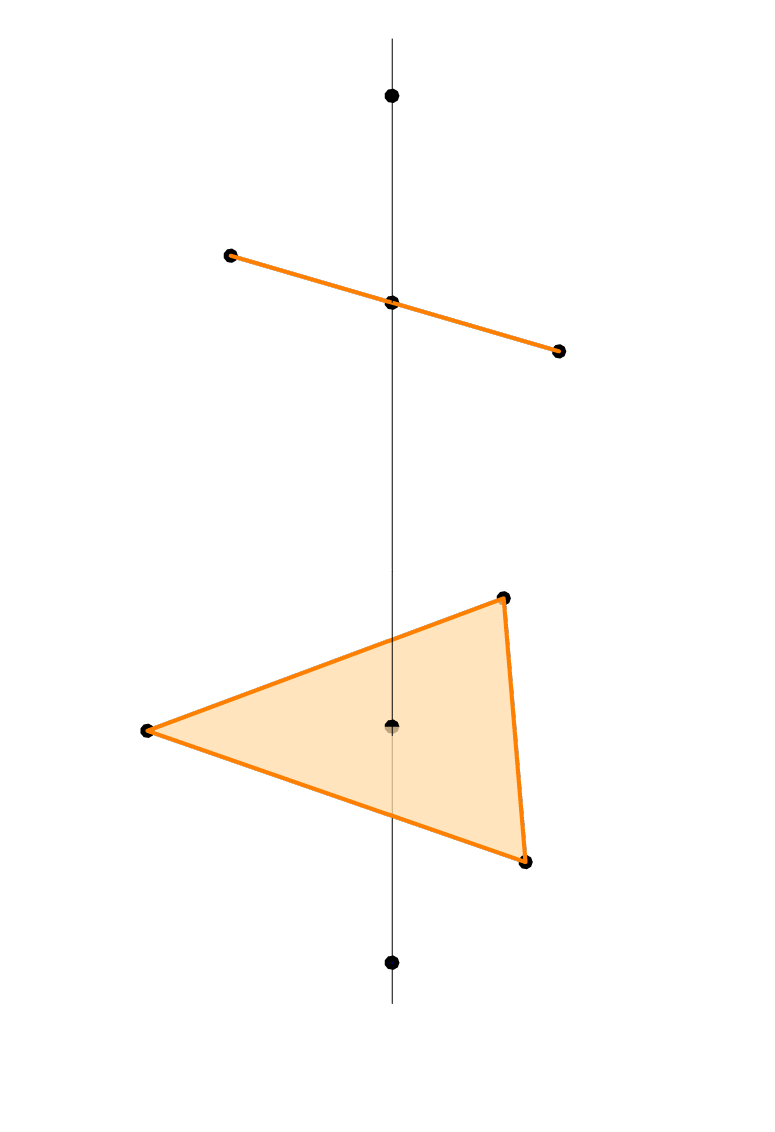}};
\path [-latex] (4,-5.5) edge [bend left] (1,-1.5);
\path [-latex] (8,-5.5) edge [bend right] (11.75,-2.5);
\node at (9,1.45){$P_1$};
\node at (8,-1.6){$P_2$};
\node at (5.5,2){$P_3$};
\node at (6.5,-1.1){$P_4$};
\node at (4,-1.2){$P_5$};
\node at (3.8,.25){$\phi_1(x,y)$};
\node at (10,.25){$\phi_2(x,y)$};
\node at (5.3,-.25){$x$};
\node at (8.3,-.25){$y$};
\node at (2,-4.5){$\phi_1$};
\node at (10.5,-4.8){$\phi_2$};
\node at (.75,1.6){\textcolor{orange}{$B_{P_1}$}};
\node at (-1.5,-.3){\textcolor{blue}{$B_{P_2}$}};
\node at (-.75,2.1){$\phi_1(P_3,y)$};
\node at (2.5,-1){$\phi_1(P_4,y)$};
\node at (-.6,-1.7){$\phi_1(P_5,y)$};
\node at (11.15,1){\textcolor{olive}{$B_{P_3}$}};
\node at (12.9,.5){\textcolor{blue}{$B_{P_4}$}};
\node at (11.8,0){\textcolor{orange}{$B_{P_5}$}};
\node at (12,-2){$\phi_2(x,P_1)$};
\node at (12,2.5){$\phi_1(x,P_2)$};
\node at (6,-6.8){$T$};
\node at (8.4,-1){\textcolor{orange}{$I$}};
\node at (5.25,-1.25){\textcolor{orange}{$T$}};
\node at (6,-3.2){$P_3P_2$};
\node at (6.4,-5.1){$P_3P_1$};
\node at (7.75,-6.75){$P_4P_1$};
\node at (8.1,-4.4){$P_4P_2$};
\node at (4.4,-6.9){$P_5P_1$};
\node at (3.9,-4.5){$P_5P_2$};
\end{tikzpicture}
\caption{The wedge-manifolds }\label{wedge}

\end{figure}

Indeed, $\phi_1 (\mathcal{H}_{345}\times E_{12})$ is a $3$-dimensional  topological ball with the shape of a ``fat triangle" whose boundary 
is made of two topological triangles $\phi_1 (\mathcal{H}_{345}\times \{p_1\})$ and $\phi_1 (\mathcal{H}_{345} \times \{p_2\}$) and three wedge surfaces 
($\phi_1(E_{34}\times E_{12})$, $\phi_1(E_{45}\times E_{12})$ and $\phi_1(E_{35}\times E_{12})$) whose boundary consists of two arcs of the form
$\phi(E_{jk}\times p_i)$, where $j,k\subset \{3,4,5\}$ and $i=1, 2$.

Similarly,  $\phi_2 (\mathcal{H}_{345}\times E_{12})$ is a $3$-dimensional  topological ball whose boundary 
is made of 3 wedge surfaces  ($\phi_2(E_{34}\times E_{12})$, $\phi_2(E_{45}\times E_{12})$ and $\phi_2(E_{35}\times E_{12})$) whose boundary 
consists of 3 arcs of the form
$\phi_2(p_i\times E_{12})$, $i\subset \{3,4,5\}$. See Figure \ref{wedge}. 

\smallskip
Denote by $(345)=\phi_1 (\mathcal{H}_{345}\times E_{12})\subset \R^4$ and by $(12)=  \phi_2 (\mathcal{H}_{345}\times E_{12})\subset \R^4$. These will be called the \emph{wedge-submanifolds} corresponding to the hyperbolic triangle $T_{345}$ and the elliptic edge $E_{12}$, respectively.

Note that $\phi_1 (\mathcal{H}_{345}\times \{p_i\})\subset \mathbb{S}^3_{p_1}$, $i=1,2$ and $\phi_2(p_i\times E_{12})\subset \mathbb{S}^3_{p_i}$, $i\subset \{3,4,5\}$,
where $\mathbb{S}^3_{p_i}$, denotes the $3$-dimensional sphere with center at $p_i$ and radius $2z_1$, $i=1,\dots,5$.
We shall denote by $(345)_1=\phi_1 (\mathcal{H}_{345}\times \{p_1\})$ the embedded triangle contained in $\mathbb{S}^3_{p_1}$ and by $(345)_2=\phi_1 (\mathcal{H}_{345}\times \{p_2\})$ the embedded triangle contained in $\mathbb{S}^3_{p_2}$.

In summary, we have the following lemma:

\begin{lemma}\label{lemY}
For every point $u\in (345)$, there is a point $v\in (12)$ such that $uv=2z_1$, and there is a  hyperplane tangent to $(12)$ at $v$ and a  hyperplane tangent to $(345)$ at $u$ orthogonal to  $uv$. Furthermore, if  $u\in (12)\setminus \{p_1,p_2\}$, then the tangent hyperplane of $(12)$ at $u$ is unique; similarly, if $v\in (345)\setminus \{p_3,p_4, p_5\}$, then the tangent hyperplane of $(345)$ at $v$ is unique. Moreover,  $d\big((345),(12)\big)=2z_1$. 
\end{lemma}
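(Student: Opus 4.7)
The plan is to produce, for each $u \in (345)$, a canonical partner $v \in (12)$ and then read off all the assertions from the construction of the envelope maps $\phi_1,\phi_2$ together with the preceding lemma. Given $u \in (345)$, write $u = \phi_1(\x,\y)$ for some $(\x,\y)\in \mathcal{H}_{345}\times E_{12}$ and set $v := \phi_2(\x,\y) \in (12)$. By the definitions of $\phi_1,\phi_2$, the four points $\phi_1(\x,\y), \x, \y, \phi_2(\x,\y)$ lie in that order along the line $L(\x,\y)$, so
\[ uv = \mathcal{R}_{\x}+\x\y+R_{\y} = 2z_1 \]
by Lemma \ref{constante}. This simultaneously produces the companion $v$ and proves that $uv=2z_1$.

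For the orthogonality of the tangent hyperplanes, note that $u$ lies on $\partial \mathcal{B}_{\x}$ and the outward unit normal to $\mathcal{B}_{\x}$ at $u$ is $(u-\x)/\mathcal{R}_{\x}$, which is parallel to the line $L(\x,\y)$ through $u$ and $v$. Hence the tangent hyperplane $\Pi_{\x}$ to $\mathcal{B}_{\x}$ at $u$ is perpendicular to $uv$; by the preceding lemma this same $\Pi_{\x}$ is a support hyperplane to $\conv(|\Omega_{\mathcal{H}}|\cup|\Omega_E|)$ at $u$, and in particular a tangent hyperplane to $(345)\subset|\Omega_{\mathcal{H}}|$ at $u$. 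The analogous argument applied to $B_{\y}$ at $v$ yields a tangent hyperplane to $(12)$ at $v$ perpendicular to $uv$.

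For uniqueness, if $u \in (345)\setminus\{p_3,p_4,p_5\}$ then in any representation $u = \phi_1(\x,\y)$ we must have $\x\notin\{p_3,p_4,p_5\}$ (otherwise $\phi_1(\x,\y)=\x$ would itself be a vertex). Hence $\mathcal{R}_{\x}>0$ and the sphere $\partial\mathcal{B}_{\x}$ admits a unique tangent hyperplane at $u$; by the preceding lemma this is the unique supporting hyperplane of the convex hull at $u$, and a fortiori the unique tangent hyperplane to $(345)$ at $u$. The statement for $v\in(12)\setminus\{p_1,p_2\}$ is entirely symmetric. Finally, since every $\phi_1(\x,\y)\in\partial\mathcal{B}_{\x}\subset|\Omega_{\mathcal{H}}|$ and every $\phi_2(\x,\y)\in|\Omega_E|$, we have $(345)\subset|\Omega_{\mathcal{H}}|$ and $(12)\subset|\Omega_E|$, so Lemma \ref{lemS} gives $d((345),(12))\le 2z_1$; the pair $(u,v)$ constructed above attains this bound, so $d((345),(12))=2z_1$.

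The main obstacle is conceptual rather than computational: one must recognise that the three-dimensional envelope surface $(345)=\phi_1(\mathcal{H}_{345}\times E_{12})$, although it depends on both parameters $\x$ and $\y$, is tangent at each point $\phi_1(\x,\y)$ to the single ball $\mathcal{B}_{\x}$, so that the tangent hyperplane to $\mathcal{B}_{\x}$ there really supports the entire manifold $(345)$ and not merely the $\y$-curve on it. This is exactly what the previous lemma provides; once it is granted, the present statement reduces to packaging Lemmas \ref{constante} and \ref{lemS} and observing that the singular locus of $\phi_1$ (respectively $\phi_2$) is precisely $\{p_3,p_4,p_5\}$ (respectively $\{p_1,p_2\}$).
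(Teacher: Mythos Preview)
Your argument is correct and is exactly the unpacking the paper intends: Lemma~\ref{lemY} is introduced with ``In summary, we have the following lemma'' and carries no separate proof, so the paper's own justification is the material you invoke, namely Lemma~\ref{constante}, Lemma~\ref{lemS}, the binormal lemma that follows them, and the remark just before Lemma~\ref{lemY} that $\phi_1,\phi_2$ are smooth embeddings off the vertices. One small point: your normal formula $(u-\x)/\mathcal{R}_{\x}$ is undefined when $u$ is a vertex $p_3,p_4,p_5$, but there the statement only asserts existence of \emph{some} orthogonal tangent hyperplane, which is immediate since a vertex admits a full cone of support hyperplanes.
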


Using the focal $2$-skeleton of  $\Delta^4=\{p_1, p_2, p_3, p_4, p_5\}$ described at the end of Section \ref{sec:2-skeleton}, we are now able to obtain,
for every triple $\{p_i, p_j, p_k\}\subset \{p_1, p_2, p_3, p_4, p_5\}$, a wedge-submanifold $(ijk)$  and for a pair 
$\{p_i, p_j\}\subset \{p_1, p_2, p_3, p_4, p_5\}$ a wedge-submanifold $(ij)$.  Our next step is to show that $(345)$ and $(45)$ assemble correctly because 
the restrictions of $\phi_1\colon (E_{45}\times E_{12})\to \R^4$ and $\phi'_2\colon (E_{12}\times E_{45})\to\R^4$ coincide, where 
\[\phi_1:(\mathcal{H}_{345}\times E_{12})\to \R^4\]
is defined at the end of Section \ref{sec:distance}, and 
\[\phi'_2:(\mathcal{H}_{123}\times E_{45})\to\R^4\]
is the corresponding embedding for the focal pair $(\mathcal{H}_{123}, E_{45})$ to obtain $(45)$
(Figure \ref{assemble}).

\begin{figure}[h!]
\begin{center}
\begin{tikzpicture}
\node at (-.25,0){\includegraphics[scale=.3]{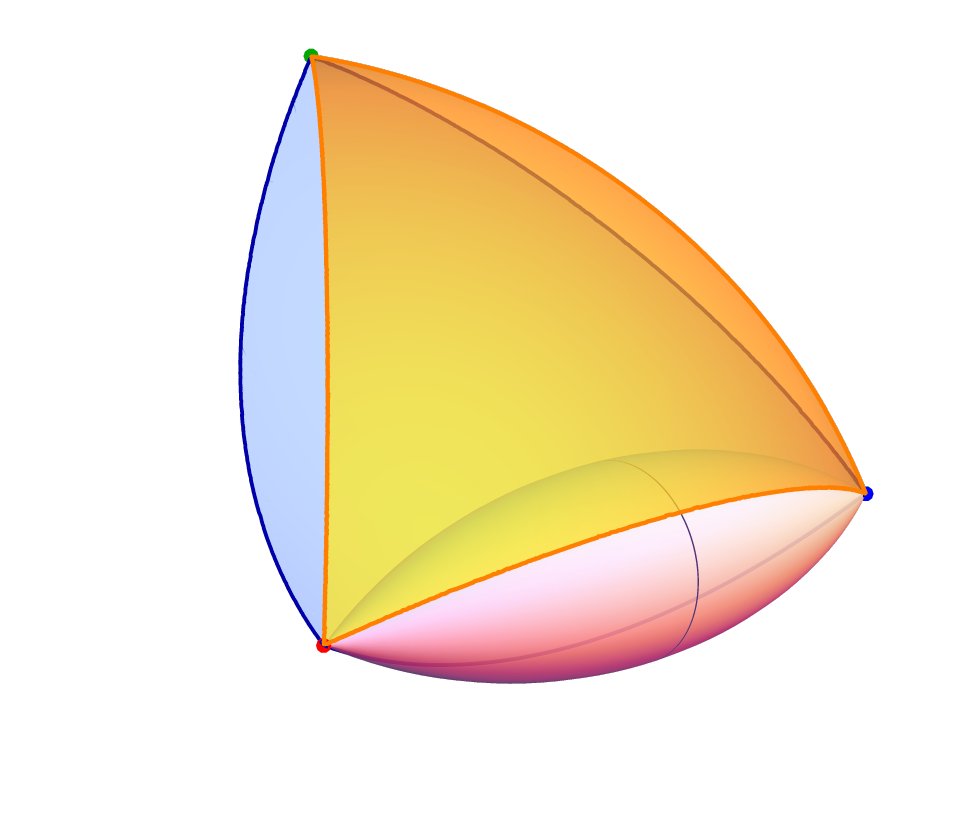}};
\node at (0,-6){\includegraphics[scale=.35]{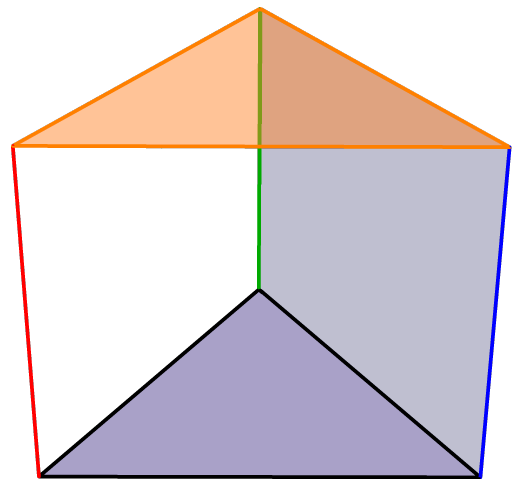}};
\node at (6,-4){\includegraphics[scale=.35]{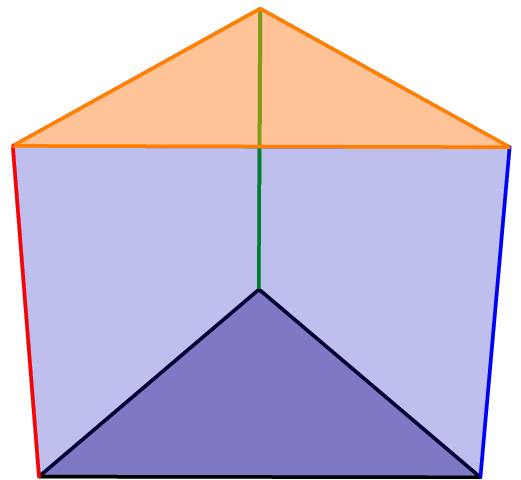}};
\path [-latex] (4,-4.5) edge [bend left] (1.5,-1.5);
\node at (2,-3.5){$\phi_1$};
\node at (-1,-3){$\phi_2$};
\path [-latex] (-.75,-4.5) edge [bend left] (0,-1.85);
\node at (-1,-1.5){$P_4$};
\node at (1.8,-.75){$P_5$};
\node at (-1,2){$P_3$};
\node at (1,1.25){$(345)$};
\node at (.6,-1.7){$(45)$};
\node at (4,-3.45){$P_5P_2$};
\node at (4.25,-5.5){$P_5P_1$};
\node at (8,-3.45){$P_4P_2$};
\node at (7.75,-5.5){$P_4P_1$};
\node at (6,-2.4){$P_3P_2$};
\node at (6.5,-4.25){$P_3P_1$};
\node at (0.1,-4.4){$P_2P_4$};
\node at (0.5,-6.25){$P_2P_5$};
\node at (2,-5.5){$P_1P_4$};
\node at (1.75,-7.5){$P_1P_5$};
\node at (-1.75,-7.5){$P_3P_5$};
\node at (-2,-5.5){$P_3P_4$};
\end{tikzpicture}
\caption{The assembly of two wedge-submanifolds }\label{assemble}
\end{center}
\end{figure}

By Lemma \ref{milagro}, the arc of ellipse $E_{45}$ is a subset of the hyperboloid of revolution $\mathcal{H}$. 
Then we know that the collection of 4-balls given by $\cup_{{\x}\in E_{45}} B'_{\x}$ comes from the Steiner chain of disks $\Omega_{E_{45}}$ with centers in $E_{45}$ and with radius $R'_{\x}$ (for every $x\in E_{45}$). But at the same time, since $E_{45}\subset \mathcal{H}$, we have another collection of 4-balls $\cup_{{\x}\in E_{45}} \mathcal{B}_{\x}$ that comes from the Steiner chain of $4$-dimensional balls $\Omega_{\mathcal{H}}$ with center in the hyperboloid $\mathcal{H}$ and radius $\mathcal{R}_{\x}$.  We want to show that   $R'_{\x}= \mathcal{R}_{\x}$ and hence $B'_{\x}=\mathcal{B}_{\x}$ for every ${\x}\in E_{45}$.

\begin{lemma}
%Consider the sub arc of ellipse $E'=e_{34}$ as a subset of the hyperboloid $\tilde H$. For every %${\x}\in E'$
%Let $r'_{\x}$ be the radius of $B'_{\x}$, the $4$-ball that arrives for the Steiner chain over %$E'$ and let $R_{\x}$ be the   
%radius of $B_{\x}$ the $4$-ball that arrives for the Steiner chain over $\tilde H$. Then
\[R'_{\x}=\mathcal{R}_{\x} \quad \mbox { and consequently } \quad B'_{\x}=\mathcal{B}_{\x}.\]
\end{lemma}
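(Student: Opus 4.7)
The approach I would take is to apply Lemma \ref{constante} to \emph{both} focal pairs and show that $\mathcal{R}_{\x}$ and $R'_{\x}$ admit identical expressions in terms of a common reference distance, namely the Euclidean distance from $\x$ to the vertex $p_1$.

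First, I would invoke Lemma \ref{constante} for the focal pair $(E_{12},\mathcal{H}_{345})$ with the specific choice $\y=p_1$. Since $R_{p_1}=0$ (as $p_1$ is an endpoint of $E_{12}$, where the $4$-balls of $\Omega_E$ degenerate, exactly as noted in the proof of Lemma \ref{constante}), the identity $\x\y+\mathcal{R}_{\x}+R_{\y}=2z_1$ immediately gives
\[
\mathcal{R}_{\x}=2z_1-\x p_1
\]
for every $\x\in \mathcal{H}_{345}$. By Lemma \ref{milagro}, $E_{45}\subset \mathcal{H}$, and $E_{45}$ lies on the boundary of $\mathcal{H}_{345}$, so this formula applies in particular to every $\x\in E_{45}$.

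Next, I would point out that the construction carried out at the end of Section \ref{sec:2-skeleton} produces $(\mathcal{H}_{123},E_{45})$ as another focal pair in which $\Delta^4$ is focally embedded, with the equilateral triangle $\{p_1,p_2,p_3\}$ lying on $\mathcal{H}_{123}$ and the edge endpoints $\{p_4,p_5\}$ lying on $E_{45}$. The proof of Lemma \ref{constante} uses nothing beyond these focal-pair and focal-embedding data together with Theorem \ref{Runoyuno}, so repeating it verbatim for the pair $(\mathcal{H}_{123},E_{45})$ yields the analogous identity
\[
\y\x'+R'_{\y}+R''_{\x'}=2z_1,\qquad \y\in E_{45},\ \x'\in\mathcal{H}_{123},
\]
where $R''_{\x'}$ denotes the radius of the hyperbolic Steiner chain $\Omega_{\mathcal{H}_{123}}$. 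Now choose $\x'=p_1$: since $p_1$ is a vertex of the triangle $p_1p_2p_3$ bounding $\mathcal{H}_{123}$, the ball degenerates there and $R''_{p_1}=0$. For any $\x\in E_{45}$ this produces
\[
R'_{\x}=2z_1-\x p_1,
\]
which matches the expression for $\mathcal{R}_{\x}$ above. Hence $R'_{\x}=\mathcal{R}_{\x}$, and since $B'_{\x}$ and $\mathcal{B}_{\x}$ are both $4$-balls centered at $\x$, they coincide.

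The main obstacle is really just the bookkeeping: one must verify that the analog of Lemma \ref{constante} genuinely carries over to $(\mathcal{H}_{123},E_{45})$. This is where the symmetric construction of the focal $2$-skeleton (conditions 1)--4) of Section \ref{sec:2-skeleton}) pays off; since Pair 2 is a focal pair, $\Delta^4$ is focally embedded in it, and the appropriate vertices serve as zero-radius reference points, the proof of Lemma \ref{constante} transfers word-for-word. Once this transfer is granted, the equality $R'_{\x}=\mathcal{R}_{\x}$ reduces to the observation that both quantities are given by the same universal formula $2z_1-\x p_1$.
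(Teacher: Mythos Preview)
Your proof is correct and rests on the same core idea as the paper's: apply the constant-sum identity of Lemma~\ref{constante} to both focal pairs $(E_{12},\mathcal{H}_{345})$ and $(E_{45},\mathcal{H}_{123})$, then evaluate at a vertex where one of the Steiner radii vanishes. The paper keeps $\y\in E_{12}$ generic, deduces that $\mathcal{R}_{\x}-R'_{\x}=\mathcal{R}'_{\y}-R_{\y}$ is independent of both variables, and then evaluates at $\x=p_4$; you instead plug the single vertex $p_1$ into each identity (as the degenerate endpoint of $E_{12}$ in the first pair and as a point on the circumcircle bounding $\mathcal{H}_{123}$ in the second) to obtain the common closed form $2z_1-\x p_1$ directly. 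Your route is a bit more economical: it bypasses the paper's detour through tangent hyperplanes and Lemma~\ref{lemY}, and it only needs $p_1\in\mathcal{H}_{123}$ rather than the full inclusion $E_{12}\subset\mathcal{H}_{123}$.
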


\proof  Let ${\x}\in E_{45}$ and ${\y}\in E_{12}$. Denote by $\phi_1({\x},{\y})$ the point in $L({\x},{\y})$ at a distance $\mathcal{R}_{\x}$ from ${\x}$, where ${\x}$ is between $\phi_1({\x},{\y})$ and ${\y}$ and let 
  $\phi'_2({\y},{\x})$ be the point in $L({\y},{\x})$ at a distance $R'_{\x}$ from ${\x}$, with ${\x}$ between ${\y}$ and $\phi'_2({\y},{\x})$,  As above, ${\x}{\y}+\mathcal{R}'_{\y}+R'_{\x}=2z_1={\x}{\y}+\mathcal{R}_{\x}+R_{\y}$, and by Lemma \ref{lemY},  there is a  hyperplane tangent to (345) at  $\phi_1({\x},{\y})$ and a  hyperplane tangent to  $(45)$ at  $\phi'_2({\y},{\x})$, both orthogonal to $L({\x},{\y})$. This implies that for every   ${\x}\in E_{45}$ and every ${\y}\in E_{12}$, $\phi_1({\x},{\y})\phi'_2({\y},{\x})$ is a constant independent of ${\x}$ and ${\y}$ and hence, since $\phi_1(p_4,{\y})=\phi'_2(\y, p_4)=p_4$, we have that $\phi_1({\x},{\y})=\phi'_2({\y},{\x})$ and therefore $R'_{\x}=\mathcal{R}_{\x}$, as we wished.  \qed

\medskip

As a consequence of the above lemma, $\phi_1(E_{45}\times E_{12})=\phi'_2(E_{12}\times E_{45})$ and hence we can assemble $(45)$ and $(345)$ as shown in Figure \ref{assemble}.  Indeed, in a similar way, this allows us to assemble $(ij)$ and $(ijk)$ when $\{i,j\}\subset \{i,j,k\}$ and 
$\{i,j,k\}\subset \{1,\dots,5\}$.  Figure \ref{assemble2} shows the assembly of $(345)$, $(45)$ and $(145)$.  Note that $(45)$  is assembled  with exactly 3 of these ``fat triangles," namely $(345), (145)$ and $(245)$. 

\begin{figure}[h]
\begin{center}
\includegraphics[scale=.6]{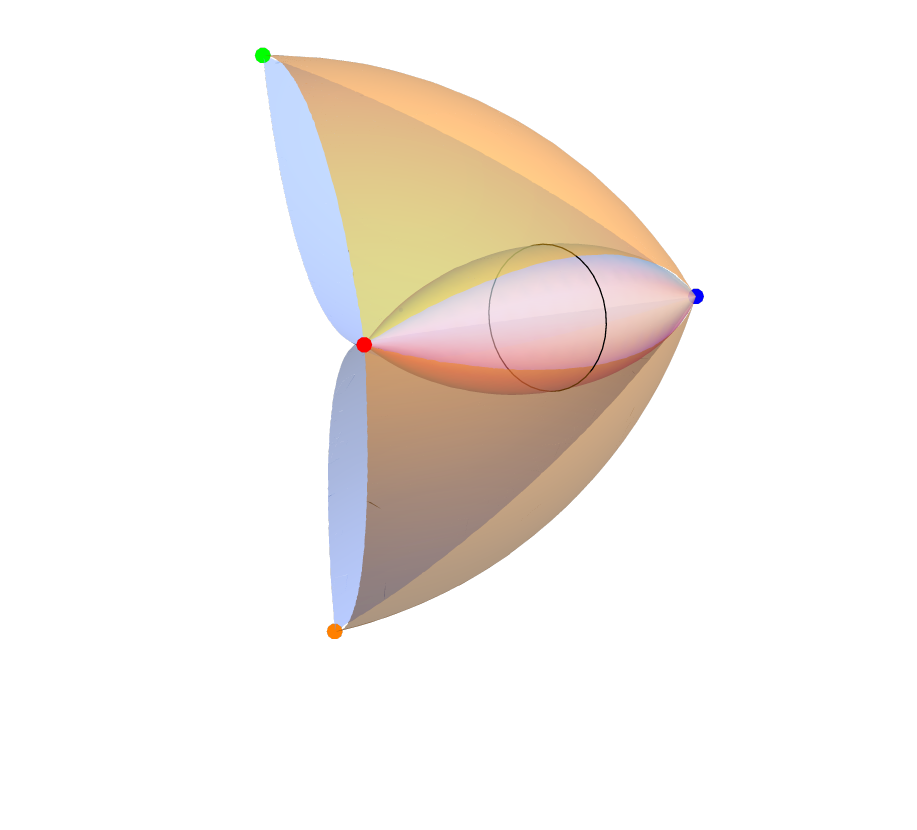}\put(-175,40){$P_1$}\put(-205,220){$P_3$}\put(-178,138){$P_4$}\put(-60,152){$P_5$}\put(-155,180){$(345)$}\put(-150,100){$(145)$}\put(-135,135){$(45)$}
\caption{The assembly of $(345)$, $(45)$ and $(145)$ }\label{assemble2}
\end{center}
\end{figure}
\bigskip

In the $3$-sphere  $\mathbb{S}^3_{p_1}$ with radius $2z_1$ and center at $p_1$, we have the embeddings of the four topological triangles  $(234)_1$, $(245)_1$, $(235)_1$ and $(345)_1$.  We shall show next that their relative interiors are pairwise disjoint in such a way that all these triangles give rise to the embedding of a topological $2$-sphere which bounds a $3$-manifold in $\mathbb{S}^3_{p_1}$. By symmetry it will be enough to prove Lemma \ref{lemQ}.  

Recall that a hyperplane $\Gamma$ separates $A$ and $B$ if $A\subset \Gamma^+$ and $B\subset \Gamma^-$, where $\Gamma^+$ and $\Gamma^-$ are the two half-spaces determined by 
$\Gamma$.

\begin{lemma}\label{lemQ}
There is a hyperplane $\Gamma\subset \R^4$ that contains the common face of $(345)_1$ and $(245)_1$ but separates the two triangles.
\end{lemma}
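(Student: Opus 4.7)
The plan is to take $\Gamma$ to be the hyperplane of fixed points of the linear isometry $\Psi\colon\R^4\to\R^4$ that swaps $p_2$ and $p_3$ while fixing $p_1$, $p_4$, $p_5$; equivalently, $\Gamma$ is the perpendicular bisector of the segment $p_2p_3$. It is a $3$-dimensional affine subspace containing $0$, $p_1$, $p_4$, and $p_5$. Because the focal $2$-skeleton has the full symmetry group of $\Delta^4$, $\Psi$ maps the focal pair $(\mathcal{H}_{345},E_{12})$ onto $(\mathcal{H}_{245},E_{13})$ and, since $\Psi(p_1)=p_1$, carries $(345)_1$ onto $(245)_1$.

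First I would verify that the common face $\phi_1(E_{45}\times\{p_1\})$ lies in $\Gamma$. The plane $\Gamma^2_{45}$ through $0$, $p_4$, $p_5$ is contained in $\Gamma$, so $E_{45}\subset\Gamma^2_{45}\subset\Gamma$; and $p_1\in\Gamma$ as well. For each $x\in E_{45}$ the point $\phi_1(x,p_1)$ lies on the line $L(x,p_1)$ whose two defining points both belong to $\Gamma$, hence $\phi_1(x,p_1)\in\Gamma$. Next I would reduce the separation to a signed-distance computation. Pick a linear functional $h\colon\R^4\to\R$ with $\Gamma=h^{-1}(0)$ and $h(p_3)>0$, so that $h(p_2)<0$ and $h(p_1)=h(p_4)=h(p_5)=0$. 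From
\[ \phi_1(x,p_1)=x+\mathcal{R}_{x}\,\frac{x-p_1}{\|x-p_1\|} \]
and $h(p_1)=0$ I get
\[ h\bigl(\phi_1(x,p_1)\bigr)=h(x)\left(1+\frac{\mathcal{R}_{x}}{\|x-p_1\|}\right), \]
and the parenthesized factor is strictly positive. Thus $\mathrm{sign}\,h(\phi_1(x,p_1))=\mathrm{sign}\,h(x)$ for every $x\in\mathcal{H}_{345}$, and applying $\Psi$ yields the opposite sign on $(245)_1$. The separation therefore reduces to showing that $h\ge0$ on $\mathcal{H}_{345}$ with equality only along $E_{45}$.

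This last geometric step is the main obstacle. Since $p_3\notin\Gamma$, the intersection $\Gamma\cap\Gamma^3_{345}$ drops to $2$ dimensions and therefore coincides with $\Gamma^2_{45}$. By Lemma \ref{milagro}, $\mathcal{H}$ meets $\Gamma^2_{45}$ precisely along the arc $E_{45}$, so $\mathcal{H}_{345}\cap\Gamma\subset E_{45}$; hence $\mathcal{H}_{345}\setminus E_{45}$ is disjoint from $\Gamma$. This set is connected, being the topological disk $\mathcal{H}_{345}$ with a single boundary arc removed, so it lies in one open half-space of $\R^4\setminus\Gamma$. Since $p_3\in\mathcal{H}_{345}\setminus E_{45}$ and $h(p_3)>0$, that half-space is $\{h>0\}$. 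Combining this with the sign formula, $(345)_1\subset\{h\ge0\}$ meets $\Gamma$ only along the common face, while $(245)_1=\Psi((345)_1)\subset\{h\le0\}$ meets $\Gamma$ only on the same face. Therefore $\Gamma$ separates the two triangles as required.
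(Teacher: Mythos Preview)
Your proof is correct and follows essentially the same route as the paper: the hyperplane you take (the perpendicular bisector of $p_2p_3$, i.e.\ the fixed set of the reflection swapping $p_2$ and $p_3$) is exactly the paper's $\Gamma$ (there described as the span of $\Gamma^2_{45}$ and $p_1$), and your sign computation $h(\phi_1(x,p_1))=h(x)\bigl(1+\mathcal{R}_x/\|x-p_1\|\bigr)$ is just an explicit form of the paper's observation that $(345)_1$ is the cone over $\mathcal{H}_{345}$ from the apex $p_1\in\Gamma$. The only small slip is that Lemma~\ref{milagro} gives $\mathcal{H}\cap\Gamma^2_{45}$ equal to the \emph{full} ellipse $\Phi(E)$, not just the arc $E_{45}$; but since $\mathcal{H}_{345}$ lies on one side of that ellipse in $\mathcal{H}$, your needed conclusion $\mathcal{H}_{345}\cap\Gamma\subset E_{45}$ still follows.
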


\proof  As before, we shall work in euclidean $4$-space $\R^4$ with coordinates $\{x,y,z,w\}$. 
Let us consider the $\{x,y,w\}$ space that contains the hyperboloid $\mathcal{H}_{345}$. As in Section \ref{subsec:focallyembedded}, denote by $\Gamma^2$ the $2$-dimensional plane through $p_4p_5$ and the barycenter $0$ of the $4$-simplex $ \Delta^4=\{p_1, p_2, p_3, p_4, p_5\}$.  In the $\{x,y,w\}$-space,
the arc of ellipse $E_{45}=E' \subset \mathcal{H}_{345}$ is contained in $\Gamma^2$, and furthermore $\Gamma^2 \cap \mathcal{H}_{345}=E_{45}$.

Let $\Gamma$ be the hyperplane generated by $\Gamma^2$ and $p_1$ and note that there is an orthogonal reflection $\rho\colon \R^4\to \R^4 $ that preserves the hyperplane $\Gamma$ and of course interchanges $p_2$ with $p_3$. Since this reflection $\rho$ is a symmetry of $\Delta^4$, by Section \ref{sec:2-skeleton}, it is also a symmetry of the focal $2$-skeleton of  $\Delta^4$. Consequently $\rho(\mathcal{H}_{345})= \mathcal{H}_{245}$, and therefore $\Gamma \cap \mathcal{H}_{345}=e_{45}= \Gamma \cap \mathcal{H}_{345}.$

Finally, note that by construction, $(345)_1$ is the intersection with $\mathbb{S}^3_{p_1}$ of the cone of $\mathcal{H}_{345}$  with apex $p_1$. Similarly, $(245)_1$ is the intersection with $\mathbb{S}^3_{p_1}$ of the cone of $\mathcal{H}_{245}$  with apex $p_1$. Consequently, the common face of $(345)_1$ and $(245)_1$ is contained in $\Gamma$, but it separates $(345)_1$ and $(245)_1$, as we wished.\qed

\medskip

As a consequence of Lemma \ref{lemQ},  the embeddings of the four topological triangles  $(234)_1$, $(245)_1$, $(235)_1$ and $(345)_1$ in  $\mathbb S^3_{p_1}$ give rise to the embedding of a topological $2$-sphere which bounds a topological $3$-manifold. We shall denote this 
wedge-submanifold by $(2345)$. Similarly we obtain $(1345)$, $(1245)$, $(1235)$ and $( 1234)$.  These $5$ wedge-submanifolds are the $3$-sphere caps of our construction. 

\medskip

For every subset 
$A$ of $\{1,\dots,5\}$, of size $1\leq |A|<5$, denote the wedge-submanifold $(A)$,  Then  
\[\Theta=\bigcup_{1\leq |A|<5} (A) \subset \R^4.\]

Note that for every given face of a wedge-submanifold $(A)$ there is precisely one other face of a different wedge-submanifold $(B)$ that coincides with it. This proves that the union of all  
the wedge-submanifold assembles perfectly; furthermore, the pieces do not intersect in the interior. Then $\Theta$ is the image of a smooth submersion 
\[ \xi\colon  M^3\to \R^4,\]
where $M^3$ is a close $3$-dimensional manifold and $\xi(M^3)=\Theta$.

As a consequence of Lemma \ref{lemY}, the following lemma emphasizes the fundamental properties of the smooth submersion 
$ \xi\colon M^3 \to \R^4$.

\begin{lemma}\label{lemnormal}
For every point ${\x}\in M^3$,  there is a unique point ${\y}\in M^3$ such that the length of $\xi({\x})\xi({\y})$ is $2z_1$ and 
the planes orthogonal to $\xi({\x})\xi({\y})$ at $\xi({\x})$ and $\xi({\y})$ are tangent planes of the smooth submersion $\Theta$.  Furthermore, if  
$\xi({\x})$ is different from $\{p_1, p_2, p_3, p_4, p_5\}$, the tangent plane of the submersion $\Theta$ at $\xi({\x})$ is unique. 
\end{lemma}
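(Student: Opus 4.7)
The plan is to reduce the statement to Lemma \ref{lemY} by a case analysis on which wedge-submanifold $\xi(\x)$ belongs to, combined with the symmetries of the focal $2$-skeleton and the geometry of the spherical caps. The key observation is a natural duality: for each wedge-submanifold $(A)$ with $A\subset\{1,\dots,5\}$ and $1\le|A|\le 4$, the partner point $\y$ is forced to lie in the dual submanifold $(A^c)$, where $A^c=\{1,\dots,5\}\setminus A$.

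For existence I would treat three cases. First, if $\xi(\x)$ is interior to some $(ij)$ or $(ijk)$, then up to a symmetry of $\Delta^4$ the pair $((A),(A^c))$ coincides with $((12),(345))$, and since, by Section \ref{sec:2-skeleton}, the focal $2$-skeleton is invariant under the full symmetry group of $\Delta^4$, the conclusions of Lemma \ref{lemY} transfer verbatim to every dual pair. Second, if $\xi(\x)$ lies interior to a spherical cap $(A)$ of size $|A|=4$, write $A^c=\{m\}$; since $(A)\subset\mathbb{S}^3_{p_m}$ we have $\xi(\x)\,p_m=2z_1$, and the tangent plane of $(A)$ at $\xi(\x)$, being tangent to $\mathbb{S}^3_{p_m}$, is orthogonal to the radius to $p_m$. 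Dually, if $\xi(\x)=p_m$ is a vertex, the abstract tangent $3$-plane to $\xi$ at $\x\in M^3$ singles out a unique outward normal direction, and tracing this ray from $p_m$ to distance $2z_1$ lands on a single point of the opposite cap.

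For uniqueness of $\y$ I would argue as follows. The image tangent plane of $\xi$ at $\x$ is a well-defined $3$-dimensional subspace of $\R^4$ (even if $\xi(\x)$ is a vertex of $\Theta$, because $M^3$ is an abstract smooth manifold), hence the outward normal, and therefore the direction of the binormal through $\xi(\x)$, is uniquely determined. Within the dual wedge-submanifold $(A^c)$ the maps $\phi_1,\phi_2$ (or the radial projection to the cap) are bijections between points and admissible binormal directions, so a prescribed binormal direction produces a unique $\y$. To rule out a binormal partner coming from a non-dual wedge, I would invoke Lemma \ref{lemS}: the diameter $d(|\Omega_{\mathcal{H}}|,|\Omega_E|)$ equals $2z_1$ only on the focally paired loci, so equality in the triangle inequality used there forces $\xi(\y)\in(A^c)$.

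The main obstacle is this last step: the global uniqueness of $\y$ across all wedge-submanifolds. The argument must combine the sharpness of Lemma \ref{lemS}, the injectivity of $\phi_1,\phi_2$ within each focal pair, and the compatibility of adjacent wedge-submanifolds proved earlier in this section, in order to rule out spurious binormal partners from other parts of $\Theta$.
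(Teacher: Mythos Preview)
The paper offers no argument for this lemma beyond the sentence ``As a consequence of Lemma~\ref{lemY}, the following lemma emphasizes the fundamental properties of the smooth submersion $\xi\colon M^3\to\R^4$.'' Your plan---case analysis on which wedge-submanifold contains $\xi(\x)$, reduction to Lemma~\ref{lemY} via the symmetry group of $\Delta^4$, and treating the spherical caps $(A)$ with $|A|=4$ by their definition as subsets of $\mathbb{S}^3_{p_m}$---is precisely what the paper is tacitly invoking, and your duality $(A)\leftrightarrow(A^c)$ is the right organizing device.

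Where you go astray is in the uniqueness argument. You have already observed that, away from the vertices, the tangent $3$-plane of the immersion at $\xi(\x)$ is unique; this fixes the outward normal direction and hence determines the \emph{point} $\xi(\y)$ at distance $2z_1$ along that normal. There is nothing more to check: existence (via Lemma~\ref{lemY} transported by symmetry, or via the radius of $\mathbb{S}^3_{p_m}$ for caps) places that point in $(A^c)$, and there is no second candidate because there is no second normal direction. Your appeal to Lemma~\ref{lemS} to ``rule out a binormal partner coming from a non-dual wedge'' is both unnecessary and ineffective: Lemma~\ref{lemS} is only an upper bound $d(|\Omega_{\mathcal H}|,|\Omega_E|)\le 2z_1$ for one focal pair and says nothing about where equality is attained across different wedges. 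The global separation of binormal directions coming from distinct dual pairs is the content of Lemma~\ref{disjuntos}, which the paper proves \emph{after} the present lemma and uses only in the diameter computation, not here.

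One genuine soft spot you correctly sense, but do not resolve, is the vertex case. At $\xi(\x)=p_m$ the map $\xi$ is not an immersion (for instance $\phi_1(\{p_3\}\times E_{12})$ collapses to $p_3$), so your sentence ``the abstract tangent $3$-plane to $\xi$ at $\x\in M^3$ singles out a unique outward normal direction'' is not justified as stated. The paper is equally informal here; in the eventual applications (Lemmas~\ref{reuleaux} and~\ref{diameter} and the main theorem) the vertex case is handled separately and directly, so the imprecision is harmless, but you should not claim a clean proof of uniqueness of $\y$ at the vertices.
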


Our last task is now to show that $\Theta$ is the boundary of a $4$-dimensional body of constant width, but it will still take us some effort to formalize a proof.  In particular, it will be important to prove that its diameter is $2z_1$. Our next section is devoted to proving this fact.

\section{The Diameter of  the Submersion \boldmath{$\Theta $} }\label{sec:diameter}

%%%%%%%%%%%%%%%%%%%%%%%%
In this section we would like to prove that $\Theta$  is the boundary  of a $4$-dimensional body of constant width. For this purpose, it is essential to prove that the diameter of $\Theta$ is $2z_1$.
\bigskip

We begin by considering the collection of lines $L(\x,\y)$, from a point $\x$ of an arc of an ellipse to point $\y$ of a piece of a hyperboloid of revolution, where the arc of the ellipse and the piece of the hyperbololoid of revolution are members of the focal $2$-skeleton of $\Delta^4=\{p_1, p_2, p_3, p_4, p_5\}$.
\[L_{ij}=\{L({\x},{\y})\mid {\x}\in E_{ij}\setminus \{p_i,p_j\}\mbox{ and } 
{\y}\in \mathcal{H}_{k\lambda\alpha}\setminus \{p_k,p_\lambda,p_\alpha\},\]
where $\{i,j,k,\lambda,\alpha\}=\{1,2,3,4,5\}$.

\begin{lemma}\label{disjuntos}
If $\{i,j\}\not=\{k,\lambda\}$, then $L_{ij}\cap L_{k\lambda}=\emptyset.$
\end{lemma}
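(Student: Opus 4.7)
I would argue by contradiction. Suppose a line $L$ lies in $L_{ij}\cap L_{k\lambda}$ with $\{i,j\}\ne\{k,\lambda\}$, so $L=L(\x,\y)=L(\x',\y')$ with $\x\in E_{ij}$, $\y\in\mathcal{H}_{k\lambda\alpha}$, $\x'\in E_{k\lambda}$, and $\y'$ in the hyperboloid dual to $\{k,\lambda\}$, all away from their excluded vertices. By condition (4) of the focal 2-skeleton definition (Section \ref{sec:2-skeleton}), the symmetric group $S_5$ acts on $\Delta^4$ by isometries permuting the $E_{\cdot}$, $\mathcal{H}_{\cdot}$, and hence the families $L_{\cdot}$. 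Up to this action I reduce to the two orbit representatives under the stabilizer of $\{1,2\}$: the disjoint case $(\{1,2\},\{3,4\})$ and the overlapping case $(\{1,2\},\{1,3\})$.

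The key structural input is the orthogonal decomposition of each focal pair from Section \ref{sec:focal}: in standard coordinates, $E_{12}\subset\Gamma^2_{12}=\{x,z\}$-plane and $\mathcal{H}_{345}\subset\Gamma^3_{345}=\{x,y,w\}$-space meet only in the common axis $l_{12}$. Since $\y\notin\{p_3,p_4,p_5\}$, the line $L$ meets $\Gamma^2_{12}$ in exactly the point $\x$ and meets $\Gamma^3_{345}$ in exactly the point $\y$, and similarly for $\x',\y'$ in the primed focal subspaces. In the disjoint case, $\Gamma^2_{12}\cap\Gamma^2_{34}$ equals the barycenter $0_\Delta$ of $\Delta^4$, which lies on no $E_{\cdot}$, so $\x\ne\x'$. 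In the overlapping case, $\Gamma^2_{12}\cap\Gamma^2_{13}$ is the line through $p_1$ and $0_\Delta$; a short computation using the ellipse equation shows this line meets the arc $E_{12}$ only at the excluded vertex $p_1$, hence again $\x\ne\x'$.

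Now I invoke Lemma \ref{lemnormal}: each presentation of $L$ yields a binormal chord of $\Theta$ contained in $L$, of length $2z_1$, with endpoints on two wedges (those dual to $\{i,j\}$ in one case, dual to $\{k,\lambda\}$ in the other). The uniqueness of the opposite point forces the two binormals either to coincide as unordered pairs or to be disjoint on $L$. Coincidence places a binormal endpoint in the intersection of two non-dual wedges; tracing the wedge boundaries from Section \ref{sec:assembly} shows such intersections lie in the 1-skeleton of $\Delta^4$, forcing $\mathcal{R}_\x=0$ or $R_\y=0$ and hence $\x$ or $\y$ to be one of the excluded vertices, a contradiction. Disjointness leaves four distinct collinear points on $\Theta$, each with tangent hyperplane perpendicular to $L$, forming two $2z_1$-chords. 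Combining Lemma \ref{lemS} for the dual wedges (giving the diameter bound $2z_1$) with the in-between positions of $\x,\y,\x',\y'$ along the chords inherited from the Steiner chain construction, an analysis of the six possible linear orderings of the four endpoints produces in each case a pair at separation strictly greater than $2z_1$---the contradiction.

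The main obstacle I foresee is the disjoint-binormal subcase, where one must carefully check every linear ordering of the four endpoints to rule out geometric consistency, likely via explicit coordinates in the two representative cases above. A secondary subtlety arises in the overlapping case when $\y\in E_{45}\subset\mathcal{H}_{345}\cap\mathcal{H}_{245}$ (cf.\ Lemma \ref{milagro}): here $\y=\y'$ is geometrically possible and the argument must use the shared-arc structure directly rather than the orthogonality of the dual 3-spaces.
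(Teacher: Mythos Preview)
Your approach is entirely different from the paper's, and the part you yourself flag as ``the main obstacle'' contains a genuine gap.

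The paper argues topologically. Let $\Upsilon$ be the set of lines in $\R^4$ not parallel to any facet of $\Delta^4$. One first shows $L_{ij}\subset\Upsilon$ (any hyperplane through a facet separates $E_{ij}$ from $\mathcal H_{k\lambda\alpha}$, so no line in $L_{ij}$ can be parallel to it). The space $\Upsilon$ breaks into several connected components, and the ten axis lines $\ell_{ij}$ (through $p_{ij}$ and the barycenter of the opposite triangle) lie in ten distinct ones. Since each $L_{ij}$ is connected and contains $\ell_{ij}$, the $L_{ij}$ live in different components and are pairwise disjoint. No metric inequality and no case split are needed.

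Your route instead places two length-$2z_1$ binormals on a common line and seeks a pair of points at distance $>2z_1$. The problem is that the only result in the paper asserting such a distance is impossible is Lemma~\ref{diameter}, whose proof \emph{uses} Lemma~\ref{disjuntos}; invoking the global diameter bound here would be circular. Lemma~\ref{lemS} is not circular, but it only bounds distances between a \emph{dual} pair $|\Omega_{\mathcal H_{ijk}}|$ and $|\Omega_{E_{\lambda\alpha}}|$. Chasing the assembly lemma one can extract, say, $\phi_2\phi_2'\le 2z_1$ (via $(12)\subset|\Omega_{\mathcal H_{125}}|$ and $(34)\subset|\Omega_{E_{34}}|$), but there is no analogous bound on $\phi_1\phi_1'$: both $\phi_1\in(345)$ and $\phi_1'\in(125)$ sit only in hyperbolic-chain envelopes, which are not dual to one another, so Lemma~\ref{lemS} says nothing about that pair. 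The single inequality $\phi_2\phi_2'\le 2z_1$ does not exclude, for instance, the ordering $\phi_1,\phi_1',\phi_2,\phi_2'$. Your sketch never identifies which pair is supposed to violate which already-established bound, and I do not see a non-circular candidate.

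The coincidence subcase is also not as clean as you claim: $(345)$ and $(34)$ share the $2$-dimensional face $\phi_1(E_{34}\times E_{12})$, so the assertion that ``intersections of non-dual wedges lie in the $1$-skeleton of $\Delta^4$'' fails there. When the hyperboloid point lies on $E_{34}\subset\mathcal H_{345}$ and the other hyperboloid point lies on $E_{12}\subset\mathcal H_{125}$, the two binormals genuinely coincide without any radius vanishing.
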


\proof Let $\Upsilon$ be the set of lines in $\R^4$ except those lines parallel to a face of $\Delta_4$.
We start by proving that $L_{ij}\subset\Upsilon$.   It is not difficult to see that if $\Gamma$ is a hyperplane through a facet of $\Delta^4$, then $\Gamma$ separates $E_{ij}$ from  $\mathcal{H}_{k\lambda\alpha}$.  Indeed, 
$\Gamma \cap (E_{ij}\cup \mathcal{H}_{k\lambda\alpha})\subset \Delta_4$. Consequently, every plane parallel to $\Gamma$ but different from it is not  transversal to $E_{ij}$ and  $\mathcal{H}_{k\lambda\alpha}$. Suppose now there is a line $L\in L_{ij}$ which is parallel to a face of $\Delta^4$.
First, by the definition of $L$ we know that it is not contained in $\Gamma$. Let $\Gamma'$ be a hyperplane containing $L$ and parallel to this face. Then $\Gamma'$ is transversal to  $E_{ij}$ and  
$\mathcal{H}_{k\lambda\alpha}$, which implies that $\Gamma'$ contains a face of $\Delta^4$, which is a contradiction. 

Note now that $\Upsilon$ has many connected components. Indeed, $10$ of them correspond to those lines $L$ with the property that the orthogonal projection of $\Delta_4$ onto $L^\perp$ maps the vertices of $\Delta^4$
onto five vertices in general and convex position. For every $\{i,j\}\subset\{1,\dots,5\}$, let $\ell_{ij}$ be the line through the midpoint of $p_ip_j$ and the barycenter of the triangle with vertices $\{p_k, p_\lambda, p_\alpha\}$. Remember that 
$\ell_{ij}\in L_{ij}$ because $\ell_{ij}$ is the common axis of the ellipse $E_{ij}$ and the hyperboloid of revolution $\mathcal{H}_{k\lambda\alpha}$. Hence, every $\ell_{ij}$ is in a different component of $\Upsilon$. The lemma now follows from the fact that every $L_{ij}$ is connected as a topological subspace of $\Upsilon$ and $\ell_{ij}\in L_{ij}$. \qed

\medskip
Denote by $\mathbb T$ the Reuleaux $5$-simplex $\mathbb T=\bigcap_{\{1,\dots 5\}} B_{p_i},$ where $B_{p_i}$ is the $4$-ball  of radius $2z_1$ and center at $p_i$, $1\leq i\leq 5$.

%\medskip

\begin{lemma}\label{reuleaux}
$\Theta$ is contained in the Reuleaux $5$-simplex $\mathbb T$.
\end{lemma}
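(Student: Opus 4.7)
The plan is to show $(A)\subset B_{p_i}$ for every wedge--submanifold $(A)\subset\Theta$ and every vertex $p_i$ of $\Delta^4$; by the symmetries of the focal $2$--skeleton it is enough to treat one representative of each of the three types of wedge (a triangle wedge $(ijk)$, an edge wedge $(\lambda\alpha)$, and a cap $(jklm)$).

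Direct use of Lemma \ref{constante} delivers most of the containments. For the triangle wedge $(ijk)=\phi_1(\mathcal H_{ijk}\times E_{\lambda\alpha})$, every point belongs to the closed ball $\mathcal B_{\x}$ of some $\x\in\mathcal H_{ijk}$, so it is enough to show $\mathcal B_{\x}\subset B_{p_m}$ for each $m$. Setting $\y=p_\lambda$ (and then $\y=p_\alpha$) in Lemma \ref{constante} gives $\x p_\lambda+\mathcal R_{\x}=\x p_\alpha+\mathcal R_{\x}=2z_1$, hence $\mathcal B_{\x}\subset B_{p_\lambda}\cap B_{p_\alpha}$ and so $(ijk)\subset B_{p_\lambda}\cap B_{p_\alpha}$. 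Analogously $(\lambda\alpha)\subset B_{p_i}\cap B_{p_j}\cap B_{p_k}$ (take $\x=p_i,p_j,p_k\in\mathcal H_{ijk}$, where $\mathcal R_{\x}=0$), and each cap $(jklm)$ lies on $\mathbb S^3_{p_i}$ by construction, hence in $B_{p_i}$. What remains is to show $\mathcal B_{\x}\subset B_{p_m}$ for $m\in\{i,j,k\}$, which reduces to
\[
\x p_m\leq\x p_\lambda\quad\text{for every }\x\in\mathcal H_{ijk};
\]
likewise $B_{\y}\subset B_{p_n}$ (for $n\in\{\lambda,\alpha\}$ and $\y\in E_{\lambda\alpha}$) reduces to $\y p_n\leq\y p_i$; and for $u\in(jklm)\subset\mathbb S^3_{p_i}$ the condition $up_n\leq 2z_1$ ($n\neq i$) is the angular inequality $\measuredangle(u-p_i,p_n-p_i)\leq 60^\circ$, since $\|p_n-p_i\|=2z_1$.

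The main obstacle is to verify these three geometric conditions. For the two half--space inequalities I would check them at the corner vertices of $\mathcal H_{ijk}$ (respectively the endpoints of $E_{\lambda\alpha}$): at the two ``opposite'' corners one has equality to the common edge length $2z_1$, and at the remaining one strict inequality because one side is $0$ and the other is $2z_1$. The inequality then extends throughout the piece because each $\mathcal H_{ijk}$ (resp.\ $E_{\lambda\alpha}$) is a connected, rotationally symmetric section of a quadric whose axis lies in the normal of the bisecting hyperplane of $p_\lambda p_m$, so the piece stays on the $p_m$--side of that hyperplane. The angular cap condition follows from the fact that $(jklm)$ is the trace on $\mathbb S^3_{p_i}$ of the four cones from $p_i$ over the hyperboloid pieces meeting at $p_i$, each of which makes at most $60^\circ$ with every edge direction $p_n-p_i$ by the focal embedding formulas of Remark \ref{nota}. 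All three conditions reduce to semialgebraic inequalities in the parameters $x_0,x_1,y_0,z_1$ of Lemma \ref{embeding3d}; once they are established, every $u\in\Theta$ lies in each $B_{p_i}$, and $\Theta\subset\mathbb T$ follows.
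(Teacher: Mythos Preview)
Your reduction to the three inequalities is sound, and the ``easy'' containments $(ijk)\subset B_{p_\lambda}\cap B_{p_\alpha}$ and $(\lambda\alpha)\subset B_{p_i}\cap B_{p_j}\cap B_{p_k}$ via Lemma~\ref{constante} are correct and match what the paper does implicitly.  The gap is in the remaining cases, where your plan stops at unverified inequalities.  In particular, your stated reason for $\x p_m\le \x p_\lambda$ --- that the axis of $\mathcal H_{ijk}$ lies in the perpendicular bisector of $p_\lambda p_m$ --- is false: in the paper's coordinates the axis of $\mathcal H_{345}$ is the $x$--axis, and the bisector of $p_1p_3$ has normal $p_1-p_3=(x_1-x_0,-y_0,z_1,0)$, which is not orthogonal to $(1,0,0,0)$ since $x_1\neq x_0$.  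So the half--space argument as written does not go through, and you are left with genuine semialgebraic checks in $x_0,x_1,y_0,z_1$ that are not carried out.

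The paper sidesteps both computations.  For the triangle and edge wedges it argues with binormals: every diameter of the compact set $(345)\cup\Delta^4$ is a binormal, and at a smooth point of $(345)$ the unique normal line is some $L(\x,\y)$; along such a line the farthest point of the set in the direction of $\y$ lies inside $\Delta^4$ before $\y$, hence at distance $<\mathcal R_{\x}+\x\y\le 2z_1$.  So the diameter of $(345)\cup\Delta^4$ is exactly $2z_1$, which gives $(345)\subset B_{p_m}$ for \emph{all} $m$ at once --- no separate ``same side'' inequality needed.  For the caps the paper's trick is cleaner still: once the triangle wedges are inside $\mathbb T$, the boundary $\partial(2345)=(234)_1\cup(235)_1\cup(245)_1\cup(345)_1$ is known to lie in $B_{p_n}$ for each $n$; since $B_{p_n}\cap\mathbb S^3_{p_1}$ is a (geodesically) convex spherical cap, the disk $(2345)$ it bounds lies in $B_{p_n}$ as well.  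This replaces your angular estimate entirely.
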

\proof We begin by proving that the set $(345)\cup \Delta_4$ is contained in the Reuleaux $5$-simplex $\mathbb T$.
Note that every diameter is a binormal; hence, it is easy to observe by the construction of $(345)$ that the binormals of $(345)\cup \Delta^4$ are of the form $p_ip_j$. Consequently, the diameter of  $(345)\cup \Delta_4$ is $2z_1$ and hence 
$(345)\cup \Delta_4\subset B_{p_i}$, for $i=1,2$. So,  $(345)\cup \Delta_4 \subset \mathbb T$.  Similarly, for every triple $\{i,j,k\}\subset\Delta_4$, we have that $(ijk)\cup \Delta_4\subset \mathbb T$.

By the same arguments, every diameter of $(12)\cup  \Delta^4$  is of the form $p_ip_j$. Consequently, the diameter of  
$(1,2)\cup \Delta_4$ is $2z_1$ and hence $(12)\cup \Delta^4\subset B_{p_i}$, for $i=3,4,5$. So,  $(12)\cup \Delta^4\subset \mathbb T$. 
Similarly, for every pair $\{i,j\}\subset\Delta_4$, we have that $(ij)\cup \Delta^4\subset \mathbb T$.

Consider now the wedge-submanifold $(2345)\subset \mathbb{S}^3_{p_1}\subset B_{p_1}$. By the above, its boundary 
$\partial (2345)=(234)_1\cup (235)_1\cup (245)_1\cup (345)_1$ is contained in $B_{p_2}$.  Therefore, $\partial (2345)$ is contained in the convex spherical cap $B_{p_2}\cap \mathbb{S}^3_{p_1}$ and hence, clearly $(2345)\subset B_{p_2}\cap \mathbb{S}^3_{p_1}\subset B_{p_2}$.  Similarly $(2345)\subset B_{p_i}$, for $2,\dots,5$. Therefore $(2345)\subset \mathbb T$.
The same holds for every $3$-spherical cap of $\Theta$. This concludes the proof of Lemma \ref{reuleaux}.\qed

%\medskip

\begin{lemma}\label{diameter}
The diameter of $\Theta$ is $2z_1$.
\end{lemma}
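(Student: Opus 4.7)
The lower bound $\mathrm{diam}(\Theta)\ge 2z_1$ is immediate: each vertex $p_i$ lies on $\Theta$ (as a corner of every bounding triangle $(jkl)_i$ of a spherical cap) and $|p_ip_j|=2z_1$ for $i\ne j$.

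For the matching upper bound, the plan is to show $|uv|\le 2z_1$ for every $u,v\in\Theta$ by a case analysis based on which wedge-submanifolds $(A),(B)$ contain $u$ and $v$. The decisive tool is Lemma~\ref{lemS}, whose proof uses only Lemma~\ref{constante} and therefore extends verbatim---by the $\Delta^4$-symmetry of the focal $2$-skeleton established in Section~\ref{sec:2-skeleton}---to every one of the ten dual pairs of faces of $\Delta^4$: whenever $u\in|\Omega_{\mathcal H_{ijk}}|$ and $v\in|\Omega_{E_{\lambda\mu}}|$ with $\{i,j,k,\lambda,\mu\}=\{1,\ldots,5\}$, one has $|uv|\le 2z_1$. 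The principal (easy) case is when $u\in(ijk)$ and $v\in(\lambda\mu)$ are on a dual pair of wedge-submanifolds: since $(ijk)\subset|\Omega_{\mathcal H_{ijk}}|$ and $(\lambda\mu)\subset|\Omega_{E_{\lambda\mu}}|$, the generalized Lemma~\ref{lemS} gives the bound at once, and every cross-type pair of wedge-submanifolds with disjoint index sets falls into this case.

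The remaining cases---two wedge-submanifolds of the same type, two non-dual wedges with overlapping index sets, or a wedge paired with a spherical cap---I would treat by exhibiting, for each $(u,v)$, a focal pair $(\mathcal H_{ijk},E_{\lambda\mu})$ so that $u$ and $v$ lie in opposite envelopes. Two structural facts make this reduction work: each bounding triangle $(klm)_i=\phi_1(\mathcal H_{klm}\times\{p_i\})$ of a spherical cap $(jklm)\subset\mathbb S^3_{p_i}$ lies in $|\Omega_{\mathcal H_{klm}}|$, so a cap point can always be viewed as a boundary point of a hyperbolic envelope; and by Lemma~\ref{disjuntos} the ten families $\{L_{ij}\}$ are pairwise disjoint, so the choice of dual-pair witness is unambiguous.

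The hardest step, which I expect to be the main obstacle, is the spherical-cap case: two points on the same cap $(jklm)\subset\mathbb S^3_{p_i}$. A direct Gram-matrix computation for the four unit vectors $w_k=(p_k-p_i)/(2z_1)$, $k\ne i$ (which satisfy $w_k\cdot w_l=1/2$ for $k\ne l$), produces explicit unit vectors $\hat e,\hat f\in\mathbb R^4$ with $w_k\cdot\hat e,w_k\cdot\hat f\ge 1/2$ yet $\hat e\cdot\hat f<1/2$ (for instance, the vectors with $W^T$-images $(\tfrac12,\tfrac12,t,t)$ and $(t,t,\tfrac12,\tfrac12)$ at $t=(2+\sqrt{10})/6$ give $\hat e\cdot\hat f=\tfrac{2(\sqrt{10}-1)}{9}<\tfrac12$), so the Reuleaux cap $\mathbb T\cap\mathbb S^3_{p_i}$ has Euclidean diameter strictly greater than $2z_1$ and Lemma~\ref{reuleaux} alone is insufficient. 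The peabody replaces precisely the offending region---a neighborhood of the $2$-skeleton, where three sphere boundaries meet---with the fat-triangle wedges, whose geometry is governed by the focal-quadric condition and Lemma~\ref{constante}. The decisive verification is that after this carving the cap sits inside the union of the hyperbolic envelopes $|\Omega_{\mathcal H_{klm}}|$ along its four bounding triangles, so that the diameter bound from the generalized Lemma~\ref{lemS} propagates throughout $\Theta$.
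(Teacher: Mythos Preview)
Your envelope strategy has a genuine gap exactly where you flag it. For the four hyperboloid pieces $\mathcal H_{abc}$ with $\{a,b,c\}\subset\{j,k,l,m\}$---the ones whose triangles $(abc)_i$ bound the cap $(jklm)$---Lemma~\ref{constante} (with $\y=p_i$, $R_{p_i}=0$) gives $|p_i\x|+\mathcal R_{\x}=2z_1$ for every $\x\in\mathcal H_{abc}$, so each ball $\mathcal B_{\x}$ is \emph{internally tangent} to $\mathbb S^3_{p_i}$ and $|\Omega_{\mathcal H_{abc}}|\cap\mathbb S^3_{p_i}=(abc)_i$ exactly. Thus your closing assertion that the cap ``sits inside the union of the hyperbolic envelopes'' holds only on $\partial(jklm)$; interior cap points belong to no Steiner envelope, and the generalized Lemma~\ref{lemS} yields nothing there. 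Your Gram-matrix computation is correct---the Reuleaux face $\mathbb T\cap\mathbb S^3_{p_i}$ has Euclidean diameter strictly larger than $2z_1$---but it only confirms the difficulty is real; the proposal does not resolve it.

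The paper avoids the case analysis entirely. A diameter $\x\y$ of $\Theta$ is a binormal of $\conv(\Theta)$, so the line $L$ is normal to $\Theta$ at both endpoints. If $L$ meets some vertex $p_i$---which happens whenever an endpoint lies in a cap interior, since the normal there passes through the opposite vertex---then, because $p_i\in\partial B_{p_j}$ for every $j\ne i$, the point $p_i$ is an \emph{endpoint} of the chord $L\cap\mathbb T$, and $\mathbb T\subset B_{p_i}$ forces that chord (hence $\x\y\subset\mathbb T$, by Lemma~\ref{reuleaux}) to have length at most $2z_1$. If $L$ misses every vertex, both $\x$ and $\y$ lie on wedge-submanifolds; by Lemma~\ref{lemnormal} each has a unique normal lying in some family $L_{ab}$, and Lemma~\ref{disjuntos} forces the two families to coincide, so $L$ is one of the designed focal binormals of length $2z_1$. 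The missing idea in your argument is this binormal reduction: Lemma~\ref{disjuntos} is not a witness-uniqueness device but the engine of the whole proof.
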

\proof  Let ${\x}{\y}$ be a diameter of $\Theta$ and let $L$ be the line through ${\x}$ and ${\y}$. By Lemma \ref{reuleaux},  
the chord ${\x}{\y}$ is contained in the Reuleaux $5$-simplex $\mathbb T$.  If ${\x}=p_i$, since every chord of $\mathbb T$ with endpoint $p_i$ has length $2z_1$, we conclude that the length of ${\x}{\y}$ is $2z_1$, as we wished.  By the same argument, we may assume that
$L\cap  \Delta^4 = \emptyset $.

If $L\cap  \Delta^4=\emptyset$, by Lemma \ref{lemnormal}, the normal line to 
$\theta$ at ${\x}$ is unique. Therefore, the line $L$, as a normal of $\Theta$ at ${\x}$, is a member of $L_{ij}$. If the length of ${\x}{\y}$ is greater than $2z_1$,  the line $L$, as a normal of $\Theta$ at ${\y}$, is a member of $L_{k\lambda}$, where $\{i,j\}\not=\{k,\lambda\}$. But this is impossible, by Lemma \ref{disjuntos}.  Therefore the length of ${\x}{\y}$ is $2z_1$ and  the diameter of $\Theta$ is $2z_1$. \qed

%\medskip

\begin{theorem}
 $\Theta$ is the boundary of a $4$-dimensional body of constant width.   
\end{theorem}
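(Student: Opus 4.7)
The plan is to set $K := \conv(\Theta)$ and show that $K$ is a convex body with $\partial K = \Theta$ and constant width $2z_1$.

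The first step is to verify that for every $x\in M^3$ with partner $y$ given by Lemma \ref{lemnormal}, the hyperplane $\Pi_x$ through $\xi(x)$ orthogonal to the chord $\xi(x)\xi(y)$ is a supporting hyperplane of $\Theta$. The foot of the perpendicular from $\xi(y)$ to $\Pi_x$ is $\xi(x)$, at distance $2z_1$; hence any $z$ strictly on the opposite side of $\Pi_x$ from $\xi(y)$ satisfies $|z-\xi(y)|>2z_1$, contradicting Lemma \ref{diameter}. Therefore $\Theta$ lies in the closed half-space bounded by $\Pi_x$ containing $\xi(y)$, so $\Pi_x$ supports $K$ at $\xi(x)$. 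In particular $\Theta\subset\partial K$, and since $\Theta$ is, by the assembly of Section \ref{sec:assembly}, a closed connected topological $3$-manifold embedded in the topological $3$-sphere $\partial K$, invariance of domain forces $\Theta=\partial K$.

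For constant width it suffices to show $h_K(u)+h_K(-u)=2z_1$ for every unit $u\in S^3$, the bound $\leq$ being immediate from $\operatorname{diam}(K)=2z_1$. For the reverse bound at a smooth boundary point $p=\xi(x)$ with outer normal $u$, Lemma \ref{lemnormal} supplies a partner $\xi(y)$ with $\xi(x)\xi(y)$ perpendicular to the (unique) tangent hyperplane at $p$; thus $\xi(y)=p-2z_1 u\in\partial K$, giving $h_K(-u)\geq 2z_1-u\cdot p$ and hence equality. For $u$ in the normal cone $N_{p_i}$ at a vertex $p_i$, the same diameter argument applied to chords from $p_i$ to points of the opposite wedge-submanifold $(A_i)\subset\mathbb{S}^3_{p_i}$ (with $A_i=\{1,\dots,5\}\setminus\{i\}$) shows that each direction $(p_i-q)/2z_1$ with $q\in(A_i)$ lies in $N_{p_i}$; conversely, the local tangent-cone structure of $\Theta$ at $p_i$, built from the wedge-submanifolds meeting there together with their focal-pair geometry, identifies $N_{p_i}\cap S^3$ exactly with this image, so that $p_i-2z_1 u\in(A_i)\subset K$ for every $u\in N_{p_i}\cap S^3$ and the width in direction $u$ is again $2z_1$. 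Continuity of the support function then propagates equality to all $u\in S^3$.

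The main obstacle is this vertex identification: proving that the normal cone $N_{p_i}$ is exhausted by directions of chords of length $2z_1$ from $p_i$ to $(A_i)$. This requires computing the tangent cone of $\Theta$ at $p_i$ by piecing together the tangent spaces of the elliptic arcs $E_{ij}$, the hyperboloid-of-revolution pieces $\mathcal{H}_{ijk}$, and the fat tetrahedra meeting at $p_i$, and showing that the compatibility forced by the focal construction together with the $S_5$-symmetry of $\Delta^4$ makes the map $q\mapsto(p_i-q)/2z_1$ from $(A_i)$ surject onto $N_{p_i}\cap S^3$.
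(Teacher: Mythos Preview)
Your plan and the paper's coincide through the first two steps: set $K=\conv(\Theta)$, use Lemma~\ref{diameter} to get $\operatorname{diam}(K)=2z_1$, and use Lemma~\ref{lemnormal} together with the diameter bound to show that every point of $\Theta$ lies on a supporting hyperplane of $K$, hence $\Theta\subset\partial K$ and then $\Theta=\partial K$. (Your invariance-of-domain justification for the last equality is a clean way to fill in what the paper leaves as ``immediate.'')

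The two routes part ways at the final step. You attempt a direct verification of $h_K(u)+h_K(-u)=2z_1$ for every $u\in S^3$, which splits into the smooth case and the vertex case; the latter forces you to prove that the normal cone $N_{p_i}$ is \emph{exactly} the cone over the opposite spherical cap $(A_i)$ via $q\mapsto (p_i-q)/2z_1$. You flag this as the main obstacle, and it is a genuine one for your route: the normal cones at the $p_i$ have nonempty interior in $S^3$, so the smooth directions are not dense and the continuity sentence at the end cannot close the gap on its own.

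The paper bypasses this entirely by citing \cite[Theorem~3.1.7]{MMO}, the standard characterization of constant width (every normal line is a double normal; equivalently, the body is diametrically complete). The only input needed is that every point of $\partial K=\Theta$ is the endpoint of a diameter of length $2z_1$, and Lemma~\ref{lemnormal} supplies exactly that --- at the vertices just as well as at smooth points. One never needs to know \emph{which} directions in $N_{p_i}$ correspond to which antipodal points; your surjectivity statement is in fact a \emph{consequence} of constant width rather than a prerequisite for it. If you replace your support-function computation by the single observation ``every boundary point is the endpoint of a diameter, hence $K$ is diametrically complete, hence of constant width,'' the obstacle evaporates and the proof is complete.
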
\label{principal}

\proof  Consider the convex hull $\conv(\Theta)$ of $\Theta$. By Lemma \ref{diameter}, the diameter of $\conv(\Theta)$ is $2z_1$. Moreover by Lemma \ref{lemnormal}, every point of $\Theta$ is the endpoint of a diameter of $\conv(\Theta)$. This implies immediately that $\Theta$ is the boundary of $\conv(\Theta)$. Moreover, again by Lemma \ref{lemnormal}, $\Theta$ is smooth, with the exception of $\{p_1, p_2, p_3, p_4, p_5\}$, at which points $\Theta$  has vertex singularities. Consequently, every normal of $\conv(\Theta)$ through a non-vertex point  
is a binormal of length $2z_1$. This implies, by \cite[Theorem 3.1.7]{MMO}, that $\conv(\Theta)$ is a body of constant width. \qed

\medskip

\noindent {\bf Acknowledgments.} Luis Montejano and Deborah Oliveros acknowledge  support  from  PAPIIT-UNAM under project 35-IN112124.  

\medskip


\begin{thebibliography}{9}




\bibitem{AMO} Arelio, I., Montejano, L. and Oliveros, D.,  \emph{Peabodies of constant width.}
Beitr\"{a}ge zur Algebra und Geometrie, Vol 64, (2023).

\bibitem{H} Hilbert, D. and Cohn-Vossen, S., \emph{Geometry and the Imagination}. Chelsea Publishing Company, New York, 1952.

\bibitem{MMO} Martini, H., Montejano, L., Oliveros, D., \emph{Bodies of Constant Width: An Introduction to Convex Geometry With Applications.} Birkh\"auser, Boston, Bassel, Stuttgart, 2019. 

\bibitem{O} Ogilvy, C.S., \emph{Excursions in Geometry}, Dover, 1990.






 
\end{thebibliography}
\end{document}